\documentclass{svproc}

\usepackage{url}

\usepackage{amsmath}
\usepackage{amssymb}
\usepackage{graphicx}
\usepackage{enumerate}

\newcommand{\C}{\mathbb{C}}
\newcommand{\D}{\mathbb{D}}
\newcommand{\T}{\mathbb{T}}
\DeclareMathOperator{\conv}{conv}
\DeclareMathOperator{\dist}{dist}
\DeclareMathOperator{\sn}{sn}
\DeclareMathOperator{\UCC}{UCC}
\let\Re\undefined
\DeclareMathOperator{\Re}{Re}
\let\Im\undefined
\DeclareMathOperator{\Im}{Im}

\newtheorem{thm}{Theorem}[section]
\numberwithin{thm}{section}
\newtheorem{lem}[thm]{Lemma}
\newtheorem{cor}[thm]{Corollary}

\begin{document}

\mainmatter              

\title{Approximation in the closed unit ball}
\titlerunning{Approximation in the closed unit ball}  

\author{Javad Mashreghi\thanks{Supported by a grant from NSERC} 
\and Thomas Ransford\thanks{Supported by grants from NSERC and the Canada research chairs program.}}
\authorrunning{Javad Mashreghi and Thomas Ransford}

\institute{D\'epartement de math\'ematiques et de statistique, Universit\'e Laval,\\ 
1045 avenue de la M\'edecine, Qu\'ebec (Qu\'ebec), Canada, G1V 0A6\\
\email{javad.mashreghi@mat.ulaval.ca}\\
\email{thomas.ransford@mat.ulaval.ca}}

\maketitle              

\begin{abstract}
In this expository article, we present a number of classic theorems that serve to identify the closure in the sup-norm of  various sets of Blaschke products, inner functions and their quotients, as well as the closure of the convex hulls of these sets. The results presented include theorems of Carath\'eodory, Fisher, Helson--Sarason, Frostman, Adamjan--Arov--Krein, Douglas--Rudin and Marshall. As an application of some of these ideas, we obtain a simple  proof of the Berger--Stampfli spectral mapping theorem for the numerical range of an operator.
\keywords{unit ball, Blaschke product, inner function,  convex hull}
\end{abstract}

\section{Introduction}

Let $X$ be a Banach space and let $E$ be a subset of $X$. 
The \emph{convex hull} of $E$, denoted by $\conv(E)$, 
is the set of all elements of the form 
$\lambda_1 x_1 + \lambda_2 x_2 + \cdots + \lambda_n x_n$, 
where $x_j\in E$ and
where $0 \leq \lambda_j \leq 1$ 
with $\lambda_1 + \lambda_2 + \cdots + \lambda_n =1$. 
The \emph{closed unit ball} of $X$ is
\[
\mathbf{B}_{X} = \{ x \in X : \|x\|\leq1 \}.
\]
In this survey, we consider some Banach spaces of functions 
on the open unit disc $\D$ or on the unit circle $\T$, 
e.g., $L^\infty(\T)$, $\mathcal{C}(\T)$, $H^\infty$ and the disc algebra $\mathcal{A}$, 
and explore the norm closure of some subsets of $\mathbf{B}_X$ and of their convex hulls. 

The unimodular elements of the above function spaces enter naturally into our discussion. 
The unimodular elements of $H^\infty$, denoted by $\mathbf{I}$, 
are a celebrated family that are called \emph{inner functions}. 
For other function spaces we use the notation $\mathbf{U}_{X}$ 
to denote the family of unimodular elements of $X$, e.g.,
\[
\mathbf{U}_{\mathcal{C}(\T)} := 
\{ f \in \mathcal{C}(\T) : |f(\zeta)| = 1 \mbox{ for all } \zeta \in \T\}.
\]

The prototype candidates for $E$ are the set of finite Blaschke products ($\mathbf{FBP}$), 
the set of all Blaschke products ($\mathbf{BP}$), 
the inner functions ($\mathbf{I}$), 
the measurable unimodular functions, as well as the quotients of pairs of functions in these families. 
We now summarize the main results. 
The formal statements and attributions will be detailed in the sections that follow.

Finite Blaschke products are elements of the disc algebra $\mathcal{A}$. 
In particular, when considered as functions on $\T$, 
they are elements of $\mathcal{C}(\T)$. 
In this regard, for these elements and their quotients, we shall see that:
\[
\boxed{
\begin{aligned}
\overline{\mathbf{FBP}} &= \mathbf{FBP}\\
\overline{\conv(\mathbf{FBP})} &=  \mathbf{B}_{\mathcal{A}}\\
\overline{\mathbf{FBP}/\mathbf{FBP}} &= \mathbf{U}_{\mathcal{C}(\T)}\\
\overline{\conv(\mathbf{FBP}/\mathbf{FBP})} &=  \mathbf{B}_{\mathcal{C}(\T)}.
\end{aligned}
}
\]

Infinite Blaschke products are elements of the Hardy space $H^\infty$. 
In particular, when considered as functions on $\T$, 
they are elements of $L^\infty(\T)$. 
For these functions and their quotients, we shall see that:
\[
\boxed{
\begin{aligned}
\overline{\mathbf{BP}} &= \overline{\mathbf{I}} = \mathbf{I}\\
\overline{\conv(\mathbf{BP})} &=  \overline{\conv(\mathbf{I})} =  \mathbf{B}_{H^\infty}\\
\overline{\mathbf{BP}/\mathbf{BP}} &=  \overline{\mathbf{I}/\mathbf{I}} =  \mathbf{U}_{L^\infty(\T)}\\
\overline{\conv(\mathbf{BP}/\mathbf{BP})} &= \overline{\conv(\mathbf{I}/\mathbf{I})} =  \mathbf{B}_{L^\infty(\T)}.
\end{aligned}}
\]

In all the results above, we consider the norm topology.
For the Hardy space $H^\infty$, 
and thus \textit{a priori} for the disc algebra $\mathcal{A}$, 
there is a weaker topology which is obtained via semi-norms
\[
p_r(f) := \max_{|z| \leq r} |f(z)|.
\]
This is referred as the topology of uniform convergence on compact subsets (UCC) of $\D$. 
Naively speaking, it is easier to converge under the latter topology.  
Therefore, in some cases we will also study the UCC-closure of a set or its convex hull. 
We shall see that:
\[
\boxed{
\overline{\mathbf{FBP}}^{\UCC} = \mathbf{B}_{H^\infty}.
}
\]
Since on the one hand, 
$\mathbf{FBP}$ is the smallest approximating set in our discussion, 
and on the other hand $\mathbf{B}_{H^\infty}$ is the largest possible set that we can approximate, 
this last result closes the door on any further investigation regarding the UCC-closure.

\section{Approximation on $\D$ by finite Blaschke products} \label{S:app-on-d-fbp}

\begin{center}
\fbox{Goal: $\overline{\mathbf{FBP}} = \mathbf{FBP}$}
\end{center}

Let $f \in H^\infty$, 
and suppose that there is a sequence of finite Blaschke products 
that converges uniformly on $\D$ to $f$. 
Then, by continuity, we also have uniform convergence on $\overline{\D}$. 
Therefore $f$ is necessarily a continuous function on $\overline{\D}$, 
and moreover it is a unimodular function on $\T$. 
It is an easy exercise to show that this function is necessarily a finite Blaschke product. 
A slightly more general version of this result is stated below.

\begin{lem}[Fatou \cite{MR1504825}] \label{T:extmfinb}
Let $f$ be holomorphic in the open unit disc $\D$ and suppose that
\[
\lim_{|z| \to 1} |f(z)| = 1.
\]
Then $f$ is a finite Blaschke product.
\end{lem}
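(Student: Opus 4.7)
The plan is to reduce the statement to the unit-modulus case by dividing out the zeros, and then use the maximum modulus principle twice.

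First I would check that $f$ is bounded, so that $f \in H^\infty$. The hypothesis $\lim_{|z|\to 1}|f(z)|=1$ means that for some $r<1$, $|f|\le 2$ on the annulus $r<|z|<1$, and on the compact disc $|z|\le r$, holomorphy bounds $f$ as well.

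The decisive step is showing that $f$ has only \emph{finitely} many zeros in $\D$. If $f$ had infinitely many zeros, they could not accumulate at any point of $\D$ (else $f\equiv 0$, contradicting $|f|\to 1$ at the boundary), so they must accumulate at some $\zeta\in\T$. Along such a sequence $z_n\to\zeta$ we would have $f(z_n)=0$, contradicting $\lim_{|z|\to 1}|f(z)|=1$. I expect this to be the main (modest) obstacle: one must be careful that the hypothesis is a limit as $|z|\to 1$, not merely a radial limit.

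Next I would form the finite Blaschke product $B$ whose zeros (with multiplicities) coincide with those of $f$, and set $g := f/B$. Then $g$ is holomorphic and non-vanishing on $\D$. Since $|B(z)|\to 1$ and $|f(z)|\to 1$ as $|z|\to 1$, also $|g(z)|\to 1$ as $|z|\to 1$.

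Finally I would apply the maximum modulus principle to both $g$ and $1/g$ (both are holomorphic on $\D$ because $g$ has no zeros). Given $\varepsilon>0$, choose $\delta>0$ so that $1-\varepsilon<|g(z)|<1+\varepsilon$ whenever $1-\delta<|z|<1$; the maximum principle, applied to $g$ and to $1/g$ on the disc $|z|\le 1-\delta/2$, extends these inequalities to all of $\D$. Letting $\varepsilon\to 0$ gives $|g|\equiv 1$ on $\D$, so $g$ is a unimodular constant $c$, and therefore $f=cB$ is itself a finite Blaschke product.
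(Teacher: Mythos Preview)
Your proof is correct and follows essentially the same route as the paper: establish finitely many zeros, divide out by the corresponding finite Blaschke product $B$, and apply the maximum principle to both $f/B$ and its reciprocal to force the quotient to be a unimodular constant. The paper is terser (it just asserts finitely many zeros from the boundary condition), while you supply the accumulation-point argument explicitly, but the underlying idea is identical.
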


\begin{proof}
Since $f$ is holomorphic on $\D$ and $|f|$  tends uniformly to $1$ as we approach $\T$, 
it has a finite number of zeros in $\D$.
Let $B$ be the finite Blaschke product formed with the zeros of $f$.
Then $f/B$ and $B/f$ are both holomorphic in $\D$, 
and their moduli uniformly tend to $1$ as we approach $\T$. 
Hence, by the maximum principle, 
$|f/B| \leq 1$ and $|B/f| \leq 1$ on $\overline{\D}$.
Thus $f/B$ is constant on $\overline{\D}$, 
and the constant has to be unimodular.\qed
\end{proof}

Lemma~\ref{T:extmfinb} immediately implies the following result.

\begin{thm} \label{T:approximation-D-FBP}
The set $\mathbf{FBP}$ of finite Blaschke products is a closed subset of $\mathbf{B}_{\mathcal{A}}$ 
(and hence also a closed subset of $\mathbf{B}_{H^\infty}$).
\end{thm}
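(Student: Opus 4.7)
The plan is to reduce the theorem directly to Lemma~\ref{T:extmfinb} (Fatou). Suppose $(B_n)$ is a sequence in $\mathbf{FBP}$ that converges in the sup-norm on $\D$ to some function $f \in \mathbf{B}_{H^\infty}$. The goal is to show that $f$ itself is a finite Blaschke product; once this is done, closedness in $\mathbf{B}_{\mathcal{A}}$ (and a fortiori in $\mathbf{B}_{H^\infty}$) follows at once.

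First I would observe that each $B_n$ extends continuously to $\overline{\D}$, and uniform convergence on $\D$ is the same as uniform convergence on $\overline{\D}$ via the continuous extensions. Hence $f$ extends continuously to $\overline{\D}$, so in particular $f \in \mathcal{A}$, and for every $\zeta \in \T$ we have
\[
|f(\zeta)| = \lim_{n \to \infty} |B_n(\zeta)| = 1,
\]
since each $B_n$ is unimodular on $\T$. In particular $\lim_{|z|\to 1}|f(z)| = 1$ by continuity of $f$ on $\overline{\D}$.

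At this point the hypothesis of Lemma~\ref{T:extmfinb} is met, and applying it yields that $f \in \mathbf{FBP}$. The only subtlety to watch is the exchange between norm convergence on $\D$ and uniform convergence on $\overline{\D}$; since each approximant already lies in $\mathcal{A}$, this is automatic and no genuine obstacle arises. Thus the real content is entirely absorbed into Fatou's lemma, and the argument is a short invocation of it.
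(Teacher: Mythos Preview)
Your proposal is correct and follows exactly the approach the paper takes: the paragraph preceding Lemma~\ref{T:extmfinb} already sketches precisely this argument (uniform convergence on $\D$ extends to $\overline{\D}$, so the limit is continuous on $\overline{\D}$ and unimodular on $\T$), and the paper then states that Theorem~\ref{T:approximation-D-FBP} follows immediately from Lemma~\ref{T:extmfinb}. There is nothing to add.
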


The following result is another simple consequence of Lemma \ref{T:extmfinb}.
It will be needed in later approximation results in this article (see Theorem~\ref{T:helson-sarason}).

\begin{cor} \label{C:extmfinb}
Let $f$ be meromorphic in the open unit disc $\D$ 
and continuous on the closed unit disc $\overline{\D}$
(as a function into the Riemann sphere). 
Suppose that $f$ is unimodular on the unit circle $\T$. 
Then $f$ is the quotient of two finite Blaschke products.
\end{cor}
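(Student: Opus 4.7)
The plan is to reduce the corollary to Lemma~\ref{T:extmfinb} by clearing the poles of $f$ with a finite Blaschke product.

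First I would argue that $f$ has only finitely many poles in $\D$. Since $f$ is continuous from the compact set $\overline{\D}$ into the Riemann sphere and takes values on $\T$ (the equator) along the boundary $\T$, by uniform continuity there is some $r<1$ such that $f(z)$ stays in a small spherical neighborhood of $\T$ for all $z$ with $r<|z|<1$. In particular $f$ is bounded on this annular region, so no poles occur there. The poles are therefore confined to the compact set $\{|z|\le r\}\subset \D$, and being isolated they are finite in number. List them as $p_1,\dots,p_m$, repeated according to order.

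Next I would form the finite Blaschke product
\[
B(z) := \prod_{j=1}^{m} \frac{z-p_j}{1-\overline{p_j}\,z},
\]
whose zeros, with multiplicity, match the poles of $f$. Then $g := fB$ is holomorphic on $\D$: each pole of $f$ is cancelled by a zero of $B$ of the correct order. Moreover, as $|z|\to 1$, we have $|B(z)|\to 1$ and $|f(z)|\to 1$ (by continuity into the sphere together with unimodularity on $\T$), so $|g(z)|\to 1$ as $|z|\to 1$.

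Lemma~\ref{T:extmfinb} now applies to $g$ and yields that $g$ is itself a finite Blaschke product. Hence $f = g/B$ is a quotient of two finite Blaschke products, as desired. The only subtlety I expect is the finiteness of the pole set, which is where the continuity hypothesis into the Riemann sphere (rather than merely meromorphy) is essential; once that is in hand, the rest is a direct appeal to Fatou's lemma.
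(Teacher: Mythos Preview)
Your argument is correct and follows essentially the same route as the paper: clear the (finitely many) poles with a finite Blaschke product $B$, observe that $fB$ satisfies the hypotheses of Lemma~\ref{T:extmfinb}, and conclude. The paper simply asserts the finiteness of the pole set from continuity on $\overline{\D}$, whereas you spell out the uniform-continuity argument; one small wording caution: your closing reference to ``Fatou's lemma'' means Lemma~\ref{T:extmfinb}, not the measure-theoretic Fatou lemma.
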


\begin{proof}
Since $f$ is unimodular on $\T$, meromorphic in $\D$ and continuous on $\overline{\D}$, 
it has a finite number of poles in $\D$. 
Let $B_2$ be the finite Blaschke product with zeros at the poles of $f$. 
Put $B_1:=B_2f$. 
Then $B_1$ satisfies the hypotheses of Lemma \ref{T:extmfinb}, 
and so it is a finite Blaschke product. 
Thus $f=B_1/B_2$.\qed
\end{proof}


\section{Approximation on compact sets by finite Blaschke products}

\begin{center}
\fbox{Goal: $\overline{\mathbf{FBP}}^{\UCC} = \mathbf{B}_{H^\infty}$}
\end{center}

If $f$ is holomorphic on $\D$ 
and can be uniformly approximated on $\D$ by a sequence of finite Blaschke products, 
we saw that, by Lemma \ref{T:extmfinb}, $f$ is itself a finite Blaschke product. 
A general element of $\mathbf{B}_{H^\infty}$ is far from being a finite Blaschke product
and cannot be approached uniformly on $\D$ by finite Blaschke products.
Nevertheless, a weaker type of convergence does hold. 
The following result says that, 
if we equip $H^\infty$ with the topology of uniform convergence on compact subsets of $\D$, 
then the family of finite Blaschke products form a dense subset of $\mathbf{B}_{H^\infty}$. 
In a certain sense, this theorem circumscribes all the other results in this article.

\begin{thm}[Carath\'eodory] \label{T:caratheodory-fbp}
Let $f \in \mathbf{B}_{H^\infty}$. 
Then there is a sequence of finite Blaschke products 
that converges uniformly to $f$ on each compact subset of~$\D$.
\end{thm}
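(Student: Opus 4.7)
The plan is to use the classical Schur algorithm, which assigns to any $f \in \mathbf{B}_{H^\infty}$ a sequence of Schur parameters in $\overline{\D}$, and whose truncations at each level produce finite Blaschke products that match $f$ to increasing order at the origin.

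First I set up the algorithm. Put $f_0 := f$ and $\gamma_n := f_n(0)$. If $|\gamma_n| = 1$ for some $n$, then the maximum modulus principle forces $f_n$ to be the unimodular constant $\gamma_n$; unwinding the recursion (and applying Lemma~\ref{T:extmfinb} at each step) shows that $f$ is already a finite Blaschke product, and we are done. Otherwise $|\gamma_n| < 1$ for every $n$, and the Schwarz lemma applied to $(f_n - \gamma_n)/(1 - \overline{\gamma_n} f_n)$ (which vanishes at $0$ and is bounded by $1$ on $\D$) shows that
\begin{equation*}
f_{n+1}(z) := \frac{1}{z}\cdot\frac{f_n(z) - \gamma_n}{1 - \overline{\gamma_n}\, f_n(z)}
\end{equation*}
again lies in $\mathbf{B}_{H^\infty}$, so the recursion continues indefinitely.

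Next, for each $n \ge 1$ I construct the approximant $B_n$ by truncating the tail. Fix any $c \in \T$, set $g_n := c$, and for $k = n-1, n-2, \ldots, 0$ define
\begin{equation*}
g_k(z) := \frac{\gamma_k + z\, g_{k+1}(z)}{1 + \overline{\gamma_k}\, z\, g_{k+1}(z)},
\end{equation*}
then put $B_n := g_0$. A direct calculation shows that each inverse Schur step preserves rationality, holomorphy on $\D$, and unimodularity on $\T$; inducting downward from the unimodular constant $c$, Lemma~\ref{T:extmfinb} identifies $B_n$ as a finite Blaschke product. By construction, $f$ and $B_n$ share their first $n$ Schur parameters $\gamma_0,\ldots,\gamma_{n-1}$.

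The convergence $B_n \to f$ on compacta follows from the Schwarz lemma. The key observation is that an element of $\mathbf{B}_{H^\infty}$ is determined to order $n$ at the origin by its first $n$ Schur parameters: unwinding the Schur recursion expresses the first $n$ Taylor coefficients of $f_0 = f$ as explicit polynomials in $\gamma_0,\ldots,\gamma_{n-1}$, and the same is true for $B_n$. Hence $f$ and $B_n$ agree to order $n$ at the origin, so $f - B_n$ is holomorphic on $\D$, bounded in modulus by $2$, and vanishes to order at least $n$ at $0$. Applying Schwarz's lemma to $(f - B_n)/2$ gives
\begin{equation*}
|f(z) - B_n(z)| \le 2|z|^n \qquad (z \in \D),
\end{equation*}
from which uniform convergence on every compact subset of $\D$ is immediate. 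The main obstacle is the bookkeeping that equates sharing Schur parameters with sharing Taylor coefficients — a routine but somewhat tedious induction on the Schur recursion; once that is in place, Schwarz's lemma concludes the argument in a single step.
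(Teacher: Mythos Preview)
Your proof is correct and is essentially the same as the paper's: both run the Schur algorithm to produce finite Blaschke products $B_n$ whose Taylor expansions agree with $f$ to order $n$ at the origin, and then invoke Schwarz's lemma to obtain $|f(z)-B_n(z)|\le 2|z|^n$. The only difference is packaging---the paper phrases the construction as an induction on $n$ (building $B_{n,f}$ from $B_{n-1,g}$, where $g$ is the Schur transform of $f$), while you write out the full Schur-parameter recursion and then rewind it from a unimodular constant.
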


\begin{proof}
(This proof is taken from \cite[page 5]{MR2261424}.)
We construct a finite Blaschke product $B_n$
such that the first $n+1$ Taylor coefficients of $f$ and $B_n$ are equal. 
Then, by Schwarz's lemma, we have
\[
|f(z) - B_n(z)| \leq 2   |z|^n, \hspace{1cm} (z \in \D),
\]
and thus the sequence $(B_n)$ converges uniformly to $f$ on  compact subsets of $\D$.

Let $c_0 := f(0)$. As $f$ lies in the unit ball, $c_0 \in \overline{\D}$. 
If $|c_0|=1$, then, by the maximum principle, 
$f$ is a unimodular constant, and the result is obvious. 
So let us assume that $|c_0| < 1$. Writing
\[
\tau_a(z):=\frac{a-z}{1-\overline{a}z}
\qquad(a,z\in\D),
\]
let us set
\begin{equation} \label{E:tdefb0}
B_0(z):= -\tau_{-c_0}(z) = \frac{z+c_0}{1+\overline{c}_0   z}, \hspace{1cm} (z \in \D).
\end{equation}
Clearly, $B_0$ is a finite Blaschke product and its constant term is $c_0$.

The rest is by induction. 
Suppose that we can construct $B_{n-1}$ for each element of $\mathbf{B}_{H^\infty}$. Set
\begin{equation} \label{E:tdefgbyf}
g(z) := \frac{\tau_{c_0}(  f(z)  )}{z}, \hspace{1cm} (z \in \D).
\end{equation}
By Schwarz's lemma, $g \in \mathbf{B}_{H^\infty}$. 
Hence, there is a finite Blaschke product $B_{n-1}$ such that 
$g-B_{n-1}$ has a zero of of order at least $n$ at the origin. 
If $B$ is a finite Blaschke product of degree $n$, 
and  $w \in \D$, 
then it is easy to verify directly that $\tau_w \circ B$ and $B \circ \tau_w$ 
are also finite Blaschke products of order $n$. Hence
\begin{equation} \label{E:tgnseq}
B_n(z) := \tau_{c_0}(  zB_{n-1}(z)  ), \hspace{1cm} (z \in \D),
\end{equation}
is a finite Blaschke product. Since
\[
f(z) = \tau_{c_0}(  zg(z)  ), \hspace{1cm} (z \in \D),
\]
we naturally expect that $B_n$ does that job. 
To establish this conjecture, it is enough to observe that
\[
\tau_{c_0}(z_2) - \tau_{c_0}(z_1) 
= \frac{(1-|c_0|^2)   (z_1-z_2)}{(1-\overline{c}_0   z_1)   (1-\overline{c}_0   z_2)}.
\]
Hence, thanks to the presence of the factor $z(g(z)-B_{n-1}(z))$, the difference
\[
f(z) - B_n(z) = \tau_{c_0}(  zg(z)  ) - \tau_{c_0}(  zB_{n-1}(z)  )
\]
is divisible by $z^{n+1}$.\qed
\end{proof}

\emph{Remark.}
The equation (\ref{E:tgnseq}) is perhaps a bit misleading, 
as if we have a recursive formula for the sequence $(B_n)_{n \geq 0}$. 
A safer way is to write the formula as
\[
B_{n,f}(z) := \tau_{c_0}(  zB_{n-1,g}(z)  ), \hspace{1cm} (n \geq 1),
\]
where $g$ is related to $f$ via (\ref{E:tdefgbyf}). 
Let us compute an example by finding $B_1:=B_{1,f}$
We know that
\[
B_{1,f}(z) = \tau_{c_0}(  zB_{0,g}(z)  ).
\]
Write $f(z) = c_0+c_1z+\cdots$ and observe that
\[
g(z) = \frac{\tau_{c_0}(  f(z)  )}{z} = \frac{-c_1}{1-|c_0|^2} + O(z).
\]
Then, by (\ref{E:tdefb0}), we have
\[
B_{0,g}(z) = -\tau_{\frac{c_1}{1-|c_0|^2}}(z),
\]
and so we get
\[
B_1(z) = \tau_{c_0}(  -z   \tau_{\frac{c_1}{1-|c_0|^2}}(z)  ).
\]
One may directly verify that
\[
B_1(z) = c_0+c_1z+ O(z^2),
\]
as required.


\section{Approximation on $\D$ by convex combinations of finite Blaschke products}

\begin{center}
\fbox{Goal: $\overline{\conv(\mathbf{FBP})} =  \mathbf{B}_{\mathcal{A}}$}
\end{center}

As we saw in Section \ref{S:app-on-d-fbp}, 
if a function $f \in H^\infty$ can be uniformly approximated 
by a sequence of finite Blaschke products on $\D$, 
then $f$ is continuous on $\overline{\D}$. 
The same result holds 
if we can approximate $f$ 
by elements that are convex combinations of finite Blaschke products. 
The only difference is that, in this case, $f$ is not necessarily unimodular on $\T$. 
We can just say that $\|f\|_\infty \leq 1$. 
More explicitly, the uniform limit of convex combinations of finite Blaschke products 
is a continuous function in the closed unit ball of $H^\infty$. 
It is rather surprising that the converse is also true.

\begin{thm}[Fisher \cite{MR0233995}] \label{T:fisher}
Let $f \in \mathbf{B}_{\mathcal{A}}$, and let $\varepsilon>0$. 
Then there are  finite Blaschke products $B_j$ 
and  convex weights $(\lambda_j)_{1 \leq j \leq n}$ such that
\[
\| \lambda_1 B_1 + \lambda_2 B_2 + \cdots +\lambda_n B_n - f  \|_\infty < \varepsilon.
\]
\end{thm}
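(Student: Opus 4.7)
The plan is in four steps: (i) reduce to polynomials of norm strictly less than one; (ii) apply Fej\'er--Riesz to manufacture a companion polynomial; (iii) realize the polynomial as the average of a continuous family of finite Blaschke products; and (iv) pass from the integral average to a Riemann sum, which is automatically a convex combination.

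First, the density of analytic polynomials in $\mathcal{A}$, combined with a rescaling $p\mapsto(1-\delta)p$ that costs only $O(\delta)$ in sup-norm, reduces matters to showing that any analytic polynomial $p$ of degree $n$ with $\|p\|_\infty<1$ can be approximated in sup-norm by convex combinations of finite Blaschke products. Second, the non-negative trigonometric polynomial $1-|p(\zeta)|^2$ on $\T$ is, by Fej\'er--Riesz, of the form $|q(\zeta)|^2$ for some outer polynomial $q$ of degree at most $n$; in particular
\[
|p(\zeta)|^2+|q(\zeta)|^2=1, \qquad \zeta\in\T.
\]

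The heart of the argument is the third step: one constructs a continuous family $\theta\in[0,2\pi]\mapsto B_\theta$ of finite Blaschke products, depending on $p$ and $q$, such that
\[
p(z)=\frac{1}{2\pi}\int_0^{2\pi} B_\theta(z)\,d\theta, \qquad z\in\overline{\D}.
\]
The natural candidates arise as M\"obius-type combinations in the parameter $e^{i\theta}$ built from $p$, $q$, and their reciprocal polynomials $p^*(z):=z^n\overline{p(1/\bar z)}$ and $q^*(z):=z^n\overline{q(1/\bar z)}$. The identity $|p|^2+|q|^2=1$ on $\T$ forces the unit-modulus condition on $\T$; the outerness of $q$ is used to control the location of poles so that $B_\theta$ is genuinely holomorphic in $\D$; and the integral identity is obtained by extracting the zeroth Fourier coefficient in $\theta$, a residue computation.

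Once this family is in hand, the uniform continuity of $(\theta,z)\mapsto B_\theta(z)$ on the compact set $[0,2\pi]\times\overline{\D}$ implies that the Riemann sums
\[
p_N(z):=\frac{1}{N}\sum_{k=0}^{N-1} B_{2\pi k/N}(z)
\]
converge to $p$ uniformly on $\overline{\D}$, and each $p_N$ is by construction a convex combination of finite Blaschke products. The main obstacle is the third step: the simultaneous requirements that $B_\theta$ have unit modulus on $\T$, be free of poles in $\D$, and have $p$ as its zeroth Fourier coefficient in $\theta$ must all be arranged by a single algebraic identity built out of the Fej\'er--Riesz factorization and the reflection $p\leftrightarrow p^*$, $q\leftrightarrow q^*$. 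Everything else in the argument is soft.
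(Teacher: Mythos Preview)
Your plan is a legitimate alternative route, but as written it is not a proof: step~(iii) is precisely where all the content lies, and you have described its desiderata without producing the family~$B_\theta$. In fact the construction is simpler than your outline suggests and does not require Fej\'er--Riesz at all. For a polynomial $p$ of degree at most $n$ with $\|p\|_\infty<1$, set
\[
B_\lambda(z)\;:=\;\frac{p(z)-\lambda z^{n}}{\,1-\lambda\,p^{*}(z)\,},\qquad \lambda\in\T.
\]
Since $\|p^{*}\|_\infty=\|p\|_\infty<1$, the denominator is zero-free on $\overline{\D}$, so $B_\lambda$ is rational and holomorphic on $\overline{\D}$; on $\T$ one checks $|p(\zeta)-\lambda\zeta^{n}|=|1-\lambda p^{*}(\zeta)|$ directly from $p^{*}(\zeta)=\zeta^{n}\overline{p(\zeta)}$, so each $B_\lambda$ is a finite Blaschke product. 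Expanding the geometric series in $\lambda p^{*}(z)$ and integrating over $\lambda\in\T$ kills every nonzero Fourier mode, giving $\frac{1}{2\pi}\int_0^{2\pi}B_{e^{i\theta}}(z)\,d\theta=p(z)$. Your steps~(i) and~(iv) then finish the argument, while step~(ii) can be discarded. So the strategy is sound, but the proposal as submitted leaves the one nontrivial identity unstated and unverified.

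The paper proceeds quite differently. Instead of reducing to polynomials, it reduces to \emph{dilated} finite Blaschke products: since $f\in\mathcal{A}$, the dilates $f_t(z)=f(tz)$ converge uniformly to $f$, and Carath\'eodory's theorem (Theorem~\ref{T:caratheodory-fbp}) supplies a finite Blaschke product $B$ with $B_t$ uniformly close to $f_t$. It therefore suffices to show $B_t\in\conv(\mathbf{FBP})$; since dilation is multiplicative and $\conv(\mathbf{FBP})$ is closed under products, this reduces further to a single dilated M\"obius factor, for which the paper writes down an explicit four-term convex decomposition by hand. The paper's route is more elementary and self-contained; yours, once the missing formula is supplied, has the pleasant feature of expressing $p$ \emph{exactly} as a continuous convex average of finite Blaschke products.
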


\begin{proof}
For $0 \leq t \leq 1$, 
let $f_t(z) := f(tz)$, $z \in \D$. 
Since $f$ is continuous on $\overline{\D}$, we have
\begin{equation} \label{E:tfttofunif}
\lim_{t \to 1} \|f_t-f\|_\infty = 0.
\end{equation}

By Theorem \ref{T:caratheodory-fbp}, 
there is a sequence of finite Blaschke products 
that converges uniformly to $f$ on compact subsets of $\D$. 
Based on our notation, this means that, given $\varepsilon>0$ and $t<1$, 
there is a finite Blaschke product $B$ such that
\[
\|f_t - B_t \|_\infty < \varepsilon/2.
\]
Therefore,  by (\ref{E:tfttofunif}), 
there is a finite Blaschke product $B$ such that
\[
\|f - B_t \|_\infty < \varepsilon.
\]
If we can show that $B_t$ itself is actually a convex combination of finite Blaschke products, 
the proof is done.

Firstly, note that $(gh)_t = g_t   h_t$ 
for all $g$ and $h$, 
and that the family of convex combinations of finite Blaschke products 
is closed under multiplication. 
Hence it is enough only to consider a Blaschke factor
\[
B(z) = \frac{\alpha-z}{1-\overline{\alpha}   z}.
\]
Secondly, it is easy to verify that
\begin{equation} \label{E:tbtcombination}
B_t(z) = \frac{\alpha-tz}{1-\overline{\alpha}   tz} 
= \frac{t(1-|\alpha|^2)}{1-|\alpha|^2   t^2} 
\times \frac{\alpha t -z}{1-\overline{\alpha}t   z} + \frac{|\alpha|   (1-t^2)}{1-|\alpha|^2   t^2} \times e^{i\arg \alpha}.
\end{equation}
The combination on the right side is almost good. 
More precisely, it is a combination of a Blaschke factor and a unimodular constant 
(a special case of a finite Blaschke product), 
with positive coefficients, 
but the coefficients do not add up to one. Indeed, we have
\[
1-\frac{t(1-|\alpha|^2)}{1-|\alpha|^2   t^2} - \frac{|\alpha|   (1-t^2)}{1-|\alpha|^2   t^2} 
= \frac{(1-t)(1-|\alpha|)}{1+|\alpha|t}.
\]
But this obstacle is easy to overcome. We can simply add
\[
0 = \frac{(1-t)(1-|\alpha|)}{2(1+|\alpha|t)} \times 1 + \frac{(1-t)(1-|\alpha|)}{2(1+|\alpha|t)} \times (-1)
\]
to both sides of (\ref{E:tbtcombination}) to obtain 
a convex combination of finite Blaschke products. 
Of course, the factor $1$ in the last identity 
can be replaced by any other finite Blaschke product.\qed
\end{proof}

In technical language, 
Theorem \ref{T:fisher} says that the closed convex hull of finite Blaschke products 
is precisely the closed unit ball of the disc algebra $\mathcal{A}$.


\section{Approximation on $\T$ by quotients of finite Blaschke products}

\begin{center}
\fbox{Goal: $\overline{\mathbf{FBP}/\mathbf{FBP}} = \mathbf{U}_{\mathcal{C}(\T)}$}
\end{center}

If $B_1$ and $B_2$ are finite Blaschke products, 
then $B_1/B_2$ is a continuous unimodular function on $\T$. 
Helson and Sarason showed that 
the family of all such quotients is uniformly dense in the
set of continuous unimodular functions \cite[page 9]{MR0236989}.

To prove the Helson--Sarason theorem, 
we need an auxiliary lemma. 

\begin{lem} \label{L:paritygamma}
Let $f \in \mathbf{U}_{\mathcal{C}(\T)}$. 
Then there exists $g \in \mathbf{U}_{\mathcal{C}(\T)}$ such that either
\[
f(\zeta) = g^2(\zeta)
\]
or
\[
f(\zeta) = \zeta g^2(\zeta)
\]
for all $\zeta \in \T$.
\end{lem}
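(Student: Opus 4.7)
The plan is to exploit the notion of the winding number (index) of a continuous unimodular function on $\T$. Write $f(e^{i\theta})$ as a continuous unimodular loop; by a standard argument, $f$ admits a continuous argument of the form $f(e^{i\theta})=e^{i\varphi(\theta)}$ with $\varphi:[0,2\pi]\to\R$ continuous, and $\varphi(2\pi)-\varphi(0)=2\pi n$ for some integer $n$, the winding number of $f$.

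Next, I would consider the auxiliary function $h(\zeta):=\zeta^{-n}f(\zeta)$. This is again a continuous unimodular function on $\T$, but now of winding number zero, so its continuous argument $\psi(\theta)$ satisfies $\psi(2\pi)=\psi(0)$ and therefore descends to a continuous function on $\T$. In particular, $\psi/2$ is also continuous and $2\pi$-periodic, so the function
\[
s(\zeta):=e^{i\psi(\theta)/2}\qquad(\zeta=e^{i\theta})
\]
is a well-defined element of $\mathbf{U}_{\mathcal{C}(\T)}$ satisfying $s^2=h$.

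Finally, I would split on the parity of $n$. If $n=2k$ is even, set $g(\zeta):=\zeta^k s(\zeta)\in\mathbf{U}_{\mathcal{C}(\T)}$; then $g^2(\zeta)=\zeta^{2k}h(\zeta)=f(\zeta)$, giving the first alternative. If $n=2k+1$ is odd, set $g(\zeta):=\zeta^k s(\zeta)\in\mathbf{U}_{\mathcal{C}(\T)}$; then $\zeta g^2(\zeta)=\zeta^{2k+1}h(\zeta)=f(\zeta)$, giving the second alternative.

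The one nontrivial point is the existence of a continuous argument (equivalently, a continuous logarithm) for a continuous unimodular function on $\T$, together with the well-definedness of the winding number. This is a classical lifting result from the covering map $\R\to\T$, $\theta\mapsto e^{i\theta}$, and is the only topological input needed; everything else is elementary algebra. Once this is in hand, the case analysis on the parity of $n$ is essentially automatic, since the obstruction to taking a global square root of a unimodular function on $\T$ is precisely the parity of its winding number.
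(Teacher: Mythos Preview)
Your proof is correct and follows essentially the same approach as the paper: both lift $f$ to a continuous real argument on $[0,2\pi]$, identify the winding number, and split on its parity to produce the required $g$. The only cosmetic differences are that the paper constructs the continuous argument explicitly via uniform continuity (rather than citing the covering-space lifting lemma), and defines $g$ directly as $e^{i\phi/2}$ or $e^{i(\phi-\theta)/2}$ rather than first reducing to winding number zero via $h=\zeta^{-n}f$ and then multiplying back the appropriate power of $\zeta$.
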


\begin{proof}
Since $f: \T \longrightarrow \T$ is uniformly continuous, 
we can take $N$ so big that $|\theta-\theta'| \leq 2\pi/N$ implies
$|f(  e^{i\theta}  ) - f(  e^{i\theta'}  )| < 2$.
Now, we divide $\T$ into $N$ arcs
\[
T_k = \left\{   e^{i\theta}   :   \frac{2(k-1)\pi}{N} \leq \theta
\leq \frac{2k\pi}{N}  \right\}, \hspace{1cm} (1 \leq k \leq N).
\]
Then $f(  T_k  )$ is a closed arc in a semicircle, 
and thus there is a continuous function $\phi_k(\theta)$ on the interval
$\frac{2(k-1)\pi}{N} \leq \theta \leq \frac{2k\pi}{N}$ such that
\[
f(e^{i\theta}) = \exp(  i \phi_k(\theta)  ), \hspace{1cm} (e^{i\theta} \in T_k).
\]
These functions are uniquely defined up to  additive multiples of $2\pi$. 
We adjust those additive constants so that
\[
\phi_k( 2k\pi/N ) = \phi_{k+1}( 2k\pi/N )
\]
for $k=1,2,\dots,N-1$.
Define $\phi(\theta) := \phi_k(\theta)$ 
for $\frac{2(k-1)\pi}{N} \leq \theta \leq \frac{2k\pi}{N}$, $k=1,2,\dots,N$. 
Then we get a continuous function
$\phi(\theta)$ on $[0,2\pi]$ such that
\[
f(  e^{i\theta}  ) =
\exp(  i \phi(\theta)  ), \hspace{1cm} (e^{i\theta} \in \T).
\]
Since 
\[ 
\exp(i(\phi(2\pi)-\phi(0)))=f(e^{2\pi i})/f(e^{0i})=1,
\]  
$\phi(2\pi)-\phi(0)$ is an integer multiple of $2\pi$. 
If $(\phi(2\pi)-\phi(0))/2\pi$ is \emph{even}, then set
\[
g(e^{i\theta}) := \exp(  i \phi(\theta)/2  ),
\]
and if $(\phi(2\pi)-\phi(0))/2\pi$ is \emph{odd}, then set
\[
g(e^{i\theta}) := \exp(  i (\phi(\theta)-\theta)/2  ).
\]
Then $g$ is a continuous unimodular function on $\T$ such that
either $f(e^{i\theta}) = g^2(e^{i\theta})$ 
or $f(e^{i\theta}) = e^{i\theta}   g^2(e^{i\theta})$ for all $e^{i\theta} \in \T$.\qed
\end{proof}

\begin{thm}[Helson--Sarason \cite{MR0236989}] \label{T:helson-sarason}
Let $f \in \mathbf{U}_{\mathcal{C}(\T)}$  and let $\varepsilon>0$. 
Then there are finite Blaschke products $B_1$ and $B_2$ such that
\[
\bigg\|  f - \frac{B_1}{B_2}  \bigg\|_{\mathcal{C}(\T)} <\varepsilon.
\]
\end{thm}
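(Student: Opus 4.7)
The plan is an induction on $|W|$, where $W\in\mathbb{Z}$ is the winding number of $f$ around the origin. (Passing to $\overline{f}$ if necessary, we may assume $W\ge 0$.) Lemma~\ref{L:paritygamma} drives the inductive step by halving $W$, while the base case $W=0$ is handled by constructing an explicit rational unimodular approximant on $\T$ and invoking Corollary~\ref{C:extmfinb}.

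For the inductive step, given $f$ with $W\ge 1$, Lemma~\ref{L:paritygamma} produces $g\in\mathbf{U}_{\mathcal{C}(\T)}$ of winding number $\lfloor W/2\rfloor < W$ satisfying $f=g^2$ (when $W$ is even) or $f=\zeta g^2$ (when $W$ is odd). The inductive hypothesis furnishes finite Blaschke products $B_1,B_2$ with $\|g-B_1/B_2\|_{\mathcal{C}(\T)}<\varepsilon/2$, and the elementary bound $\|h^2-k^2\|_\infty\le 2\|h-k\|_\infty$ valid for $h,k\in\mathbf{B}_{\mathcal{C}(\T)}$ then yields $\|f-(B_1/B_2)^2\|_{\mathcal{C}(\T)}<\varepsilon$ (with an extra factor $\zeta$ in the odd case). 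Since $(B_1/B_2)^2$ and $\zeta(B_1/B_2)^2$ are manifestly quotients of finite Blaschke products, the inductive step is complete.

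For the base case $W=0$, we write $f=e^{iu}$ with $u\colon\T\to\mathbb{R}$ continuous. By Fej\'er's theorem, $u$ is uniformly approximated by a real trigonometric polynomial $p$. Writing $p=b_0+2\operatorname{Re} P$ with $P(z):=\sum_{k\ge 1}b_kz^k$ an analytic polynomial vanishing at $0$, a short computation on $\T$ gives
\[
e^{ip(\zeta)}=e^{ib_0}\,\frac{A(\zeta)}{\overline{A(\zeta)}}\qquad(\zeta\in\T),
\]
where $A(z):=e^{iP(z)}$ is entire and zero-free. Let $A_N$ be the degree-$N$ Taylor polynomial of $A$; since $A_N\to A$ uniformly on $\overline{\D}$ and $A$ is nonvanishing, Hurwitz's theorem yields an $N$ for which $A_N$ has no zeros on $\overline{\D}$. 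The reverse polynomial $A_N^{\#}(z):=z^N\overline{A_N(1/\bar z)}$ has all its zeros inside $\D$ (they are the reflections across $\T$ of the zeros of $A_N$), and one checks that $A_N^{\#}(\zeta)=\zeta^N\overline{A_N(\zeta)}$ for $\zeta\in\T$. Hence the rational function
\[
\Psi(z):=e^{ib_0}\,\frac{z^N A_N(z)}{A_N^{\#}(z)}
\]
is meromorphic in $\D$, continuous on $\overline{\D}$ as a map to the Riemann sphere, and satisfies $|\Psi|=1$ on $\T$. Corollary~\ref{C:extmfinb} identifies $\Psi$ as a quotient of two finite Blaschke products, while $\Psi|_{\T}\to e^{ip}$ uniformly as $N\to\infty$.

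The main obstacle lies in the base-case construction: one must correctly combine the approximation errors from Fej\'er's theorem and Taylor truncation, and verify that $\Psi$ has exactly the analytic structure required by Corollary~\ref{C:extmfinb}, namely meromorphic in $\D$ with all poles inside $\D$, continuous on $\overline{\D}$ as a Riemann-sphere-valued map, and unimodular on $\T$. By contrast, the inductive step is essentially automatic once the elementary quadratic error bound is noted.
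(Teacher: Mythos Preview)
Your proof is correct, but it takes a longer path than the paper's. You use Lemma~\ref{L:paritygamma} to set up an induction on the winding number, halving $W$ at each step until you reach $W=0$, where you build a rational unimodular approximant by writing $f=e^{iu}$, approximating $u$ by a real trigonometric polynomial, and truncating the Taylor series of $e^{iP}$.

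The paper applies Lemma~\ref{L:paritygamma} only \emph{once}, and for a different purpose: not to lower the winding number, but simply to write $f=g^2$ (or $\zeta g^2$) with $g\in\mathbf{U}_{\mathcal{C}(\T)}$ of arbitrary winding number. The key observation is that for unimodular $g$ one has $g^2=g/\bar g$ on $\T$, so a Weierstrass approximation of $g$ by a trigonometric (Laurent) polynomial $p$ immediately yields the rational unimodular approximant $p/p^*$ to $f$, where $p^*(z):=\overline{p(1/\bar z)}$ satisfies $p^*=\bar p$ on $\T$. A two-line estimate gives $|f-p/p^*|\le 2|g-p|$ on $\T$, and Corollary~\ref{C:extmfinb} finishes the job. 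There is no need to reach winding number zero, no exponentials, and no Taylor truncation with Hurwitz.

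What your approach buys is perhaps a more conceptual explanation of \emph{why} the square root is needed: it is a device for reducing winding number. What the paper's approach buys is brevity---the $g^2=g/\bar g$ trick collapses your entire induction and base-case construction into a single application of Weierstrass approximation followed by the $p\mapsto p/p^*$ map.
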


\begin{proof}
According to Lemma \ref{L:paritygamma}, 
it is enough to prove the result for unimodular functions of the form $f=g^2$ 
(note that $b(e^{i\theta}): = e^{i\theta}$ is a Blaschke factor). 
Without loss of generality, assume that $\varepsilon < 1$.

By Weierstrass's theorem, there is a trigonometric polynomial $p(z)$ such that
\[
\|g-p\|_{_{\mathcal{C}(\T)}} < \varepsilon.
\]
The restriction $\varepsilon<1$ ensures that $p$ has no zeros on $\T$. 
Let $p^*(z) := \overline{p(1/\overline{z})}$, 
and consider the quotient $p/p^*$. 
Since $p$ is a good approximation to $g$, 
we expect that $p/p^*$ should be a good approximation to $g/g^*=g^2=f$. 
More precisely, on the unit circle $\T$, we have
\[
\frac{g}{g^*} - \frac{p}{p^*} = \frac{(g-p)p^*+(p^*-g^*)p}{g^*   p^*},
\]
which gives
\[
|f-p/p^*| \leq |g-p|+|p^*-g^*| \leq 2 \varepsilon.
\]
It is enough now to note that $p/p^*$ is a meromorphic function 
that is unimodular and continuous on $\T$, 
and thus, according to Corollary~\ref{C:extmfinb}, 
it is the quotient of two finite Blaschke products.\qed
\end{proof}

If we allow approximation by quotients of general Blaschke products, then it turns out that we can approximate a much larger class of functions. This is the subject of the Douglas--Rudin theorem, to be established in Section~\ref{S:DR} below.


\section{Approximation on $\D$ by convex combination of quotients of finite Blaschke products}

\begin{center}
\fbox{Goal: $\overline{\conv(\mathbf{FBP}/\mathbf{FBP})} =  \mathbf{B}_{\mathcal{C}(\T)}$}
\end{center}

The quotient of two finite Blaschke products is a continuous unimodular function on $\T$. Hence a convex combination of such fractions stays in the closed unit ball of $\mathcal{C}(\T)$. As the first step in showing that this set is dense in $\mathbf{B}_{\mathcal{C}(\T)}$, we consider the larger set of all unimodular elements of $\mathcal{C}(\T)$, and then pass to the special subclass of quotients of finite Blaschke products.

\begin{lem} \label{L:cchunimodinl-count}
Let $f \in \mathbf{B}_{\mathcal{C}(\T)}$ and let $\varepsilon>0$. 
Then there are $u_j \in \mathbf{U}_{\mathcal{C}(\T)}$ 
and  convex weights $(\lambda_j)_{1 \leq j \leq n}$ such that
\[
\|  \lambda_1   u_1 + \lambda_2   u_2 + \cdots + \lambda_n
u_n - f  \|_{\mathcal{C}(\T)} < \varepsilon.
\]
\end{lem}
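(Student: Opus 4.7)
The plan is to exhibit $f$ as within $\varepsilon$ of a two-term convex combination $\tfrac12 u_+ + \tfrac12 u_-$ with $u_\pm\in\mathbf{U}_{\mathcal{C}(\T)}$. The starting observation is elementary: for any $w\in\overline{\D}$ with $w\ne0$, the two points $w\pm i(w/|w|)\sqrt{1-|w|^2}$ lie on $\T$ and have midpoint $w$. Applied pointwise to $f$, this formula would yield the decomposition immediately, but the factor $f/|f|$ is discontinuous at any zero of $f$, and such zeros cannot in general be removed by a small perturbation alone: a continuous map $f\colon\T\to\C$ may, after all, be surjective onto $\overline{\D}$.

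The main, and only nontrivial, step is therefore to approximate $f$ within $\mathbf{B}_{\mathcal{C}(\T)}$ by a continuous function $\tilde f$ that has \emph{no zero} on $\T$. I would first shrink $f$ to $(1-\delta)f$, losing $\delta\|f\|_\infty\le\delta$ in sup-norm, and then apply a Fej\'er mean to $(1-\delta)f$ to obtain a trigonometric polynomial $P$ with $\|P\|_\infty\le1-\delta$ and $\|P-(1-\delta)f\|_\infty<\eta$ for $\eta$ as small as desired. Since $\theta\mapsto P(e^{i\theta})$ is real-analytic, its image $P(\T)$ is a compact real-analytic curve in $\C$ and therefore has planar Lebesgue measure zero; consequently there exists $c\in\C$ with $|c|<\delta$ and $c\notin P(\T)$. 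Setting $\tilde f:=P-c$ gives a function in $\mathcal{C}(\T)$ that vanishes nowhere on $\T$, satisfies $\|\tilde f\|_\infty\le(1-\delta)+|c|\le 1$, and obeys $\|\tilde f-f\|_\infty\le\delta+\eta+|c|$, which is $<\varepsilon$ for the obvious choices of parameters.

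With such a $\tilde f$ in hand the decomposition is a one-liner. Set
\[
u_\pm:=\tilde f\pm i\,\frac{\tilde f}{|\tilde f|}\sqrt{1-|\tilde f|^2}.
\]
Since $\tilde f$ has no zeros, $\tilde f/|\tilde f|$ is continuous; since $|\tilde f|\le1$, so is $\sqrt{1-|\tilde f|^2}$; hence $u_\pm\in\mathcal{C}(\T)$. A short computation, in which the cross-terms cancel because $(\tilde f/|\tilde f|)\overline{\tilde f}=|\tilde f|$ is real, gives $|u_\pm|^2=|\tilde f|^2+(1-|\tilde f|^2)=1$, so $u_\pm\in\mathbf{U}_{\mathcal{C}(\T)}$, and by construction $\tfrac12(u_++u_-)=\tilde f$. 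Therefore
\[
\|\tfrac12 u_+ + \tfrac12 u_- - f\|_{\mathcal{C}(\T)} = \|\tilde f - f\|_{\mathcal{C}(\T)} < \varepsilon,
\]
which is the required convex combination, with $n=2$ and $\lambda_1=\lambda_2=\tfrac12$. The entire difficulty of the lemma is concentrated in the zero-avoidance construction of $\tilde f$; without it the closing identities break down pointwise, and any more elaborate choice (partition of unity, weighted averages of more than two unimodular functions) ultimately runs into the same obstruction at the zeros of $f$.
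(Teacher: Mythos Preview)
Your proof is correct, and it takes a genuinely different route from the paper's.

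The paper does not touch the zero set of $f$ at all. Instead it starts from the identity
\[
w=\frac{1}{2\pi}\int_0^{2\pi}\frac{e^{i\theta}+w}{1+\overline w\,e^{i\theta}}\,d\theta
\qquad(w\in\D),
\]
approximates the integral by a Riemann sum with $N$ nodes, and then substitutes $w=(1-\varepsilon)f(e^{i\theta})$. The resulting functions
\[
u_k(e^{i\theta})=\frac{e^{i2k\pi/N}+(1-\varepsilon)f(e^{i\theta})}{1+(1-\varepsilon)\overline{f(e^{i\theta})}\,e^{i2k\pi/N}}
\]
are M\"obius images of $(1-\varepsilon)f$, hence automatically continuous and unimodular regardless of where $f$ vanishes. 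The price is that $N$ must grow like $1/\varepsilon^2$.

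Your argument trades this off in the opposite direction: you invest work up front (Fej\'er approximation plus the measure-zero image trick) to produce a zero-free $\tilde f$, and are then rewarded with an exact two-term convex decomposition $\tilde f=\tfrac12 u_++\tfrac12 u_-$. So you obtain the stronger conclusion that $n=2$ suffices for every $\varepsilon$, at the cost of a less self-contained construction. One small correction: your closing remark that ``any more elaborate choice \dots\ ultimately runs into the same obstruction at the zeros of $f$'' is not accurate---the paper's M\"obius-based formula shows precisely how to bypass that obstruction without ever making $f$ zero-free.
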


\begin{proof}
Let $w \in \D$. Then, by the Cauchy integral formula,
\[
w = \frac{1}{2\pi i} \int_\T \frac{\zeta+w}{\zeta(1+\overline{w}
\zeta)}  \, d\zeta = \frac{1}{2\pi} \int_{0}^{2\pi}
\frac{e^{i\theta}+w}{1+\overline{w}   e^{i\theta}}  \, d\theta.
\]
In a sense, the integral on the right side 
is an infinite convex combination of unimodular elements. 
We shall approximate it by a Riemann sum and thereby obtain an ordinary finite convex combination. 
Since
\[
\bigg|  \frac{e^{i\theta}+w}{1+\overline{w}   e^{i\theta}} -
\frac{e^{i\theta'}+w}{1+\overline{w}   e^{i\theta'}}  \bigg|
\leq \frac{1+|w|}{1-|w|}    |\theta-\theta'|,
\]
for
\[
\Delta_N := \bigg|  w - \frac{1}{N}   \sum_{k=1}^{N}
\frac{e^{i2k\pi/N}+w}{1+\overline{w}   e^{i2k\pi/N}}
  \bigg| ,
\]
we obtain the estimation
\begin{eqnarray*}
\Delta_N &=&  \bigg|  \frac{1}{2\pi}   \int_{0}^{2\pi}
\frac{e^{i\theta}+w}{1+\overline{w}   e^{i\theta}}  \, d\theta -
\frac{1}{N}   \sum_{k=1}^{N} \frac{e^{i2k\pi/N}+w}{1+\overline{w}
e^{i2k\pi/N}}
  \bigg|\\
&\leq& \frac{1}{2\pi}   \sum_{k=1}^{N}
\int_{2(k-1)\pi/N}^{2k\pi/N} \bigg|
\frac{e^{i\theta}+w}{1+\overline{w}   e^{i\theta}} -
\frac{e^{i2k\pi/N}+w}{1+\overline{w} e^{i2k\pi/N}}
  \bigg|  \, d\theta\\
&\leq& \frac{1}{2\pi}   \sum_{k=1}^{N}
\int_{2(k-1)\pi/N}^{2k\pi/N} \frac{1+|w|}{1-|w|}
\frac{2\pi}{N}  \, d\theta \\
&=& \frac{1+|w|}{1-|w|}
\frac{2\pi}{N}.
\end{eqnarray*}
As $\|f\|_\infty \leq 1$, we have $|f(e^{i\theta})| \leq 1$ for all $e^{i\theta} \in \T$. 
Hence, by the estimate above,
\[
\bigg|  (1-\varepsilon)f(e^{i\theta}) - \frac{1}{N}
\sum_{k=1}^{N}
\frac{e^{i2k\pi/N}+(1-\varepsilon)f(e^{i\theta})}{1+(1-\varepsilon)\overline{f(e^{i\theta})}
  e^{i2k\pi/N}}
  \bigg| \leq \frac{1+|(1-\varepsilon)f(e^{i\theta})|}{1-|(1-\varepsilon)f(e^{i\theta})|}
\frac{2\pi}{N}.
\]
Thus, for each $e^{i\theta} \in \T$,
\[
\bigg|  f(e^{i\theta}) - \frac{1}{N}   \sum_{k=1}^{N}
\frac{e^{i2k\pi/N}+(1-\varepsilon)f(e^{i\theta})}{1+(1-\varepsilon)\overline{f(e^{i\theta})}
  e^{i2k\pi/N}}
  \bigg| \leq
\varepsilon+\frac{4\pi}{\varepsilon N}.
\]
But, each
\[
u_k(e^{i\theta}) =
\frac{e^{i2k\pi/N}+(1-\varepsilon)f(e^{i\theta})}{1+(1-\varepsilon)\overline{f(e^{i\theta})}
  e^{i2k\pi/N}}
\]
is in fact a unimodular continuous function on $\T$. 
Thus, given $\varepsilon>0$, 
it is enough to choose $N$ so large that 
$4\pi/(\varepsilon N) <\varepsilon$, to get
\[
\bigg|  f(e^{i\theta}) - \frac{1}{N}   \sum_{k=1}^{N}
u_k(e^{i\theta})
  \bigg| \leq
2\varepsilon
\]
for all $e^{i\theta} \in \T$.\qed
\end{proof}

In the light of Theorem~\ref{T:helson-sarason}, 
it is now easy to pass from an arbitrary unimodular element 
to the quotient of two finite Blaschke products.

\begin{thm} \label{T:cchui/idinl-count}
Let $f \in \mathbf{B}_{\mathcal{C}(\T)}$ and let $\varepsilon>0$. 
Then there are finite Blaschke products $B_{ij}$, $1 \leq i,j \leq n$ 
and  convex weights $(\lambda_j)_{1 \leq j \leq n}$ such that
\[
\bigg\|  \lambda_1   \frac{B_{11}}{B_{12}} + \lambda_2
\frac{B_{21}}{B_{22}} + \cdots + \lambda_n
\frac{B_{n1}}{B_{n2}} - f  \bigg\|_{\mathcal{C}(\T)} < \varepsilon.
\]
\end{thm}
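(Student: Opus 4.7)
The plan is simply to concatenate Lemma~\ref{L:cchunimodinl-count} with the Helson--Sarason theorem (Theorem~\ref{T:helson-sarason}), which together already do essentially all the work. Given $f \in \mathbf{B}_{\mathcal{C}(\T)}$ and $\varepsilon>0$, I would first apply Lemma~\ref{L:cchunimodinl-count} with tolerance $\varepsilon/2$ to obtain convex weights $\lambda_1,\dots,\lambda_n$ and unimodular continuous functions $u_1,\dots,u_n \in \mathbf{U}_{\mathcal{C}(\T)}$ such that the convex combination $\sum_j \lambda_j u_j$ approximates $f$ to within $\varepsilon/2$ in the sup norm.

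With $n$ and the weights $\lambda_j$ now fixed, I would then apply Theorem~\ref{T:helson-sarason} to each $u_j$ individually with tolerance $\varepsilon/2$, producing finite Blaschke products $B_{j1}$ and $B_{j2}$ with $\|u_j - B_{j1}/B_{j2}\|_{\mathcal{C}(\T)} < \varepsilon/2$. Forming the analogous convex combination $\sum_j \lambda_j B_{j1}/B_{j2}$ with the same weights and using $\sum_j \lambda_j = 1$, a term-by-term triangle inequality gives
\[
\Bigl\|\sum_{j=1}^n \lambda_j u_j - \sum_{j=1}^n \lambda_j \frac{B_{j1}}{B_{j2}}\Bigr\|_{\mathcal{C}(\T)} \leq \sum_{j=1}^n \lambda_j \cdot \frac{\varepsilon}{2} = \frac{\varepsilon}{2},
\]
and one further triangle inequality combining this with the first approximation yields the desired bound $\varepsilon$.

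Since the heavy lifting is done entirely by the two cited results, there is no genuine obstacle; the only point worth flagging is the order of quantifiers. One must first fix the convex decomposition from Lemma~\ref{L:cchunimodinl-count}, thereby determining $n$, and only then select the individual Helson--Sarason approximants, so that the weighted sum of their errors stays controlled by $\varepsilon/2$ rather than growing with $n$. In short, the theorem is a clean two-step corollary, and the only bookkeeping is the splitting of $\varepsilon$ into two halves.
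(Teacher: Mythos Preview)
Your proposal is correct and matches the paper's own proof essentially line for line: apply Lemma~\ref{L:cchunimodinl-count} with tolerance $\varepsilon/2$, then Theorem~\ref{T:helson-sarason} to each $u_k$ with tolerance $\varepsilon/2$, and finish with the triangle inequality using $\sum_j\lambda_j=1$. Your remark about the order of quantifiers is exactly the point, and the paper handles it the same way.
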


\begin{proof}
By Lemma \ref{L:cchunimodinl-count}, 
there are $u_j \in \mathbf{U}_{\mathcal{C}(\T)}$ 
and  convex weights $(\lambda_j)_{1 \leq j \leq n}$ such that
\[
\|  \lambda_1   u_1 + \lambda_2   u_2 + \cdots + \lambda_n
u_n - f  \|_{\mathcal{C}(\T)} < \varepsilon/2.
\]
For each $k$, by Theorem \ref{T:helson-sarason}, 
there are finite Blaschke products  $B_{k1}$ and $B_{k2}$ such that
\[
\| u_k - B_{k1}/B_{k2}  \|_\infty < \varepsilon/2.
\]
Hence
\[
\bigg\|  f - \sum_{k=1}^{n} \lambda_k   B_{k1}/B_{k2}
 \bigg\|_\infty \leq \bigg\|  f - \sum_{k=1}^{n} \lambda_k
u_k  \bigg\|_\infty + \sum_{k=1}^{n} \lambda_k \|   u_k -B_{k1}/B_{k2}  \|_\infty < \varepsilon.
\]
This completes the proof.\qed
\end{proof}


\section{Approximation on $\D$ by infinite Blaschke products} \label{S:frostman-shift}

\begin{center}
\fbox{Goal: $\overline{\mathbf{BP}} = \overline{\mathbf{I}} = \mathbf{I}$}
\end{center}

We start to describe our approximation problem 
as in the beginning of Section \ref{S:app-on-d-fbp}. 
But, extra care is needed here 
since we are dealing with infinite Blaschke products 
and they are not continuous on $\overline{\D}$. 
Let $f \in H^\infty$ and assume that there is a sequence 
of infinite Blaschke products that converges uniformly on $\D$ to $f$. 
First of all, we surely have $\|f\|_\infty \leq 1$. 
But, we can say more. 
For each  Blaschke product in the sequence, 
there is an exceptional set of Lebesgue measure zero such that 
on the complement the product has radial limits. 
The union of all these exceptional sets still has Lebesgue measure zero, 
and, at all points outside this union, each infinite Blaschke product has a radial limit. 
Therefore, the function $f$ itself 
must have a radial limit of modulus one almost everywhere. 
In technical language, $f$ is an inner function. 
Hence, in short, if we can uniformly approximate an $f \in H^\infty$ 
by a sequence of infinite Blaschke products, 
then $f$ is necessarily an inner function. 
Frostman showed that the converse is also true.

Let $\phi$ be an inner function for the open unit disc. 
Fix $w \in \D$ and consider $\phi_w = \tau_w \circ \phi$, i.e.,
\[
\phi_w(z) = \frac{w-\phi(z)}{1-\overline{w}   \phi(z)}, \hspace{1cm} (z \in \D).
\]
Since $\tau_w$ is an automorphism of the open unit disc 
and $\phi$ maps $\D$ into itself, 
then clearly so does $\phi_w$, 
i.e. $\phi_w$ is also an element of the closed unit ball of $H^\infty$. 
Moreover, for almost all $e^{i\theta} \in \T$,
\[
\lim_{r \to 1} \phi_w(re^{i\theta}) 
= \frac{w-\phi(e^{i\theta})}{1-\overline{w}   \phi(e^{i\theta})}
= - \overline{\phi(e^{i\theta})}  \frac{w-\phi(e^{i\theta})}{  \overline{w} - \overline{\phi(e^{i\theta})}   } \in \T.
\]
Therefore, for each $w \in \D$, 
the function $\phi_w$ is in fact an inner function. 
What is much less obvious is that $\phi_w$ 
has a good chance of being a Blaschke product. 
More precisely, the exceptional set
\[
\mathcal{E}(\phi) := \{  w \in \D : \phi_w \mbox{ is not a Blaschke product}  \}
\]
is small. 
Frostman showed that the Lebesgue measure of $\mathcal{E}(\phi)$ is zero. 
In fact, there is even a stronger version saying that 
the logarithmic capacity of $\mathcal{E}(\phi)$ is zero. 
But the simpler version with measure is enough for our approximation problem. 
We start with a technical lemma.

\begin{lem} \label{T:tt-meanib}
Let $\phi$ be an inner function in the open unit disc $\D$. Then the limit
\[
\lim_{r \to 1} \int_{0}^{2\pi} \log \big|\phi(re^{i\theta})\big|  \, d\theta
\]
exists. Moreover,  $\phi$ is a Blaschke product if and only if
\[
\lim_{r \to 1} \int_{0}^{2\pi} \log \big|\phi(re^{i\theta})\big|  \, d\theta = 0.
\]
\end{lem}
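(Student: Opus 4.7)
The plan is to apply Jensen's formula, which is the natural tool for relating the logarithmic mean of $|\phi|$ on circles to the zeros of $\phi$. After reducing to the case $\phi(0)\ne 0$ (any zero at the origin contributes a $k\log r$ term that vanishes as $r\to 1$), I would let $(a_n)$ enumerate the zeros of $\phi$ in $\D$, repeated according to multiplicity. Jensen's formula gives, for all but countably many $r\in(0,1)$,
\[
\frac{1}{2\pi}\int_0^{2\pi}\log|\phi(re^{i\theta})|\,d\theta = \log|\phi(0)| + \sum_{|a_n|<r}\log\frac{r}{|a_n|}.
\]
The right-hand side is manifestly increasing in $r$ (each summand is nonnegative, and new terms are added as $r$ grows), while the left-hand side is bounded above by $0$ since $|\phi|\le 1$. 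Hence the limit exists. Because $\phi\in H^\infty$, its zeros satisfy the Blaschke condition $\sum_n(1-|a_n|)<\infty$, so $\sum_n\log(1/|a_n|)$ converges, and passing to the limit gives
\[
L := \lim_{r\to 1}\frac{1}{2\pi}\int_0^{2\pi}\log|\phi(re^{i\theta})|\,d\theta = \log|\phi(0)| + \sum_n\log\frac{1}{|a_n|}.
\]

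For the equivalence, I would form the Blaschke product $B$ with zeros $(a_n)$, normalized so that $|B(0)|=\prod_n|a_n|$; then $L=\log(|\phi(0)|/|B(0)|)$. If $\phi$ is itself a Blaschke product then necessarily $\phi=cB$ for some unimodular $c$, giving $|\phi(0)|=|B(0)|$ and $L=0$. For the converse, assume $L=0$ and set $S:=\phi/B$; this is holomorphic on $\D$ since $B$ cancels all zeros of $\phi$, and $|S(0)|=1$. The key step is then to show $|S|\le 1$ on $\D$: I would argue this via the partial finite Blaschke products $B_N\to B$ locally uniformly. Each $B_N$ is continuous on $\overline{\D}$ with $|B_N|=1$ on $\T$, so $|\phi/B_N|\to|\phi|\le 1$ uniformly as $|z|\to 1$, and the maximum modulus principle gives $|\phi/B_N|\le 1$ throughout $\D$; letting $N\to\infty$ yields $|S|\le 1$. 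Combined with $|S(0)|=1$, the maximum modulus principle now forces $S$ to be a unimodular constant, and $\phi=cB$ is a Blaschke product.

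The main delicate point is obtaining $|S|\le 1$ in the converse direction: since $B$ may have zeros accumulating on $\T$, the naive quotient estimate $|\phi/B|$ cannot be controlled directly near the boundary. The partial-product trick sidesteps this obstacle by exploiting the uniform continuity of finite Blaschke products up to $\T$. Everything else reduces to Jensen's formula, the Blaschke condition, and standard monotone convergence plus maximum-modulus reasoning.
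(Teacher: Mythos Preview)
Your argument is correct, and it takes a genuinely different route from the paper's. The paper invokes the canonical factorization $\phi = B S_\sigma$ at the outset, splits the logarithmic mean as
\[
\int_0^{2\pi}\log|\phi(re^{i\theta})|\,d\theta = \int_0^{2\pi}\log|B(re^{i\theta})|\,d\theta - \sigma(\T),
\]
and then proves (via Jensen applied to $B$) that the Blaschke term tends to $0$. The equivalence then drops out immediately: the limit equals $-\sigma(\T)$, which is zero iff $\sigma\equiv 0$, i.e.\ iff $\phi$ is a Blaschke product.

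You bypass the canonical factorization entirely. You apply Jensen directly to $\phi$, use monotonicity plus the Blaschke condition to get $L=\log(|\phi(0)|/|B(0)|)$, and then for the converse you essentially re-derive by hand the Riesz step $\|\phi/B\|_\infty\le 1$ via the finite partial products $B_N$ and the maximum principle. This is more elementary and self-contained---it does not presuppose the singular inner function machinery---but it is longer in the converse direction, and it does not yield the extra quantitative information $L=-\sigma(\T)$ that the paper's argument gives for free. One minor phrasing point: your sentence ``$|\phi/B_N|\to|\phi|\le 1$ uniformly as $|z|\to 1$'' is slightly garbled; what you mean (and what the argument needs) is that for fixed $N$, $|B_N(z)|\to 1$ uniformly as $|z|\to 1$, hence $\limsup_{|z|\to 1}|\phi/B_N|\le 1$, and then the maximum principle gives $|\phi/B_N|\le 1$ on $\D$.
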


\begin{proof}
Considering the canonical decomposition $\phi=BS_\sigma$, we have
\[
\log \big|\phi(re^{i\theta})\big| = \log \big|
B(re^{i\theta})  \big| -\frac{1}{2\pi} \int_{\T}
\frac{1-r^2}{1+r^2-2r\cos(\theta-t)}   \, d\sigma(t).
\]
Using Fubini's theorem, we obtain
\begin{equation}\label{E:phibp}
\int_{0}^{2\pi}   \log \big|\phi(re^{i\theta})\big|  \,
d\theta = \int_{0}^{2\pi}   \log \big|B(re^{i\theta})\big|
 \, d\theta - \int_{\T}  \, d\sigma(t).
\end{equation}
Thus the main task is to deal with Blaschke products. 

First of all, we have
\begin{equation}  \label{E:intbisneg}
 \frac{1}{2\pi}   \int_{0}^{2\pi} \log \big|B(re^{i\theta})\big|  \, d\theta \leq 0
\end{equation}
for all $r$ with $0 \leq r <1$. 
Now, without loss of generality, we assume that $B(0) \ne 0$, 
since otherwise we can divide $B$ by   $z^m$, 
where $m$ is the order of the zero of $B$ at the origin, 
and this modification does not change the limit. 
Then, by Jensen's formula,
\[
\log |B(0)| = \sum_{|z_n| < r} \log \bigg( \frac{|z_n|}{r}
\bigg) + \frac{1}{2\pi}   \int_{0}^{2\pi} \log |B(re^{i\theta})|  \, d\theta
\]
for all $r$, $0<r<1$. 
Since $B(0)  = \prod_{n=1}^{\infty} |z_n|$, we thus obtain
\[
\frac{1}{2\pi}   \int_{0}^{2\pi} \log \big|B(re^{i\theta})\big|  \, d\theta =  \sum_{|z_n| < r} \log \bigg(
\frac{r}{|z_n|} \bigg) - \sum_{n=1}^{\infty} \log \bigg(
\frac{1}{|z_n|}\bigg).
\]
Given $\varepsilon > 0$, choose $N$ so large that
\[
\sum_{n=N+1}^{\infty} \log \bigg( \frac{1}{|z_n|}\bigg) <
\varepsilon.
\]
Then, for $r>|z_N|$,
\[
\frac{1}{2\pi}   \int_{0}^{2\pi} \log \big|B(re^{i\theta})\big|  \, d\theta 
\geq \sum_{n=1}^{N} \log \bigg(
\frac{r}{|z_n|} \bigg) -  \sum_{n=1}^{N} \log \bigg(
\frac{1}{|z_n|}\bigg) - \varepsilon.
\]
Therefore,
\[
\liminf_{r \to 1} \frac{1}{2\pi}   \int_{0}^{2\pi} \log \big|B(re^{i\theta})\big|  \, d\theta \geq - \varepsilon,
\]
and, since $\varepsilon$ is an arbitrary positive number,
\[
\liminf_{r \to 1} \frac{1}{2\pi}   \int_{0}^{2\pi} \log \big|B(re^{i\theta})\big|  \, d\theta \geq 0.
\]
Finally, (\ref{E:intbisneg}) and the last inequality together imply that
\[
\lim_{r\to1} \frac{1}{2\pi}   \int_{0}^{2\pi} \log \big|B(re^{i\theta})\big|  \, d\theta = 0.
\]
 
Returning now to \eqref{E:phibp}, we see that 
\[
\lim_{r \to 1} \int_{0}^{2\pi} \log \big|\phi(re^{i\theta})\big|  \, d\theta 
= \lim_{r \to 1^-} \int_{0}^{2\pi} \log \big|B(re^{i\theta})\big|  \, d\theta - \int_{\T}  \,d\sigma(t) 
= - \sigma(\T).
\]
This formula also shows that
\[
\lim_{r \to 1} \int_{0}^{2\pi} \log \big|\phi(re^{i\theta})\big|  \, d\theta = 0,
\]
if and only if  $\sigma(\T)=0$, and, 
since $\sigma$ is a positive measure, 
this holds if and only if $\sigma \equiv 0$.  
Therefore, the above limit is zero if and only if $\phi$ is a Blaschke product.\qed
\end{proof}

In view of the following result, 
the functions $\phi_w$, $w \in \D$, are called the \emph{Frostman shifts} of $\phi$.

\begin{lem}[Frostman \cite{MR0012127}] \label{T:frostman-shifts-1}
Let $\phi$ be an inner function for the open unit disc. Fix $0 < \rho < 1$, and define
\[
\mathcal{E}_\rho(\phi) 
:= \{  e^{i\theta} \in \T : \phi_{\rho e^{i\theta}} \mbox{ is not a Blaschke product}  \}.
\]
Then $\mathcal{E}_\rho(\phi)$ has Lebesgue measure zero.
\end{lem}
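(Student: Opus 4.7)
The plan is to apply the Blaschke-product criterion of Lemma \ref{T:tt-meanib} to each Frostman shift $\phi_{\rho e^{i\theta}}$, and to control matters globally by integrating in $\theta$. For fixed $\theta$, define
\[
I_r(\theta) := \frac{1}{2\pi}\int_0^{2\pi}\log\bigl|\phi_{\rho e^{i\theta}}(re^{it})\bigr|\,dt.
\]
Since $\phi_{\rho e^{i\theta}}$ is inner, Lemma \ref{T:tt-meanib} tells us that $L(\theta):=\lim_{r\to 1^-}I_r(\theta)$ exists, is $\le 0$, and vanishes precisely when $\phi_{\rho e^{i\theta}}$ is a Blaschke product. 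Moreover $r\mapsto I_r(\theta)$ is nondecreasing, as it is the circular mean of the subharmonic function $\log|\phi_{\rho e^{i\theta}}|$. Thus the assertion that $\mathcal{E}_\rho(\phi)$ has Lebesgue measure zero reduces to $L(\theta)=0$ for a.e.\ $\theta$, and since $L\le 0$ it is enough to prove $\int_0^{2\pi}L(\theta)\,d\theta=0$.

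The key step is a mean-value identity: for any fixed $z\in\D$,
\[
\frac{1}{2\pi}\int_0^{2\pi}\log\bigl|\phi_{\rho e^{i\theta}}(z)\bigr|\,d\theta \;=\;\max\bigl(\log\rho,\log|\phi(z)|\bigr).
\]
Indeed, $\phi_{\rho e^{i\theta}}(z)=(\rho e^{i\theta}-\phi(z))/(1-\rho e^{-i\theta}\phi(z))$, and the classical formula $\frac{1}{2\pi}\int_0^{2\pi}\log|Re^{i\theta}-a|\,d\theta=\max(\log R,\log|a|)$, applied to the numerator ($R=\rho$, $a=\phi(z)$) and, after rewriting the denominator's modulus as $|e^{i\theta}-\rho\phi(z)|$, to the denominator ($R=1$, $a=\rho\phi(z)$, which gives $0$ because $|\rho\phi(z)|<1$), yields the identity. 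Applying Fubini then gives
\[
\int_0^{2\pi}I_r(\theta)\,d\theta \;=\; \int_0^{2\pi}\max\bigl(\log\rho,\log|\phi(re^{it})|\bigr)\,dt.
\]

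Now let $r\to 1^-$. Since $\phi$ is inner, $|\phi(re^{it})|\to 1$ for a.e.\ $t$, so the integrand on the right tends to $0$ a.e.; it is also uniformly bounded between $\log\rho$ and $0$, so bounded convergence gives $\int_0^{2\pi}I_r(\theta)\,d\theta\to 0$. Combining this with the monotonicity $I_r\nearrow L$, the monotone convergence theorem forces $\int_0^{2\pi}L(\theta)\,d\theta=0$. Since $L\le 0$, we conclude $L\equiv 0$ a.e., and therefore $\mathcal{E}_\rho(\phi)$ has Lebesgue measure zero.

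I expect the main obstacle to be pinpointing the right averaging: one must first rephrase the Blaschke-product criterion in integral form via Lemma \ref{T:tt-meanib}, and then realize that averaging the logarithmic mean $I_r(\theta)$ over $\theta$ collapses the double integral into a single integral that is directly controlled by the inner property of $\phi$. Once this structural observation is in place, the remaining ingredients (the classical mean-value formula, Fubini, and bounded/monotone convergence) are all routine.
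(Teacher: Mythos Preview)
Your proof is correct and follows essentially the same route as the paper: both use the mean-value identity $\frac{1}{2\pi}\int_0^{2\pi}\log|\phi_{\rho e^{i\theta}}(z)|\,d\theta=\max(\log\rho,\log|\phi(z)|)$, apply Fubini, and then use dominated convergence on the right-hand side together with Lemma~\ref{T:tt-meanib}. The only minor difference is that you invoke the monotonicity of $r\mapsto I_r(\theta)$ (from subharmonicity of $\log|\phi_{\rho e^{i\theta}}|$) and the monotone convergence theorem to pass the limit inside the $\theta$-integral, whereas the paper uses Fatou's lemma on $-I_r\ge 0$; both are routine and lead to the same conclusion.
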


Note that this theorem implies that 
the two-dimensional Lebesgue measure of $\mathcal{E}(\phi)$ is also zero.

\begin{proof}
For each $\alpha \in \D$, we have
\begin{equation} \label{E:tintlogtphi}
\frac{1}{2\pi}    \int_{0}^{2\pi}   \log \bigg|
\frac{\rho e^{i\theta}-\alpha}{1-\rho e^{-i\theta}\alpha} \bigg|
 \, d\theta = \max(\log \rho, \log|\alpha|).
\end{equation}
Since $\phi$ is inner, 
we can replace $\alpha$ by $\phi(re^{it})$ and then integrate with respect to~$t$.
This gives
\[
\frac{1}{2\pi} \int_{0}^{2\pi} \int_{0}^{2\pi}
\log \left| \frac{\rho e^{i\theta} - \phi(re^{it})}{1-\rho e^{-i\theta}\phi(re^{it})} \right| \, d\theta  dt 
=\int_{0}^{2\pi} \max(\log \rho, \log|\phi(re^{it})|) \, dt.
\]
Since $\rho$ is fixed and $|\phi| \leq 1$, the family
\[
f_r (e^{it}) = \max(\log \rho,
\log|\phi(re^{it})|), \hspace{1cm} (e^{it} \in \T),
\]
where the parameter $r$ runs through $[0,1)$, 
is \emph{uniformly} bounded in modulus by the positive constant $-\log\rho$, and
\[
\lim_{r \to 1} f_r (e^{it}) = \max(\log \rho, \lim_{r \to 1}
\log|\phi(re^{it})|) = \max(\log \rho, 0)=0
\]
for almost all $e^{it} \in \T$. 
Hence, by the dominated convergence theorem,
\[
\lim_{r \to 1}  \int_{0}^{2\pi} \, f_r (e^{it}) \, dt = 0,
\]
which we rewrite as
\[
\lim_{r \to 1}  \int_{0}^{2\pi} \bigg(  \int_{0}^{2\pi}   \log \left| \frac{\rho e^{i\theta} - \phi(re^{it})}{1-\rho e^{-i\theta}\phi(re^{it})} \right|  \, d\theta \bigg) \, dt = 0.
\]
But, considering the fact that the integrand is negative, 
the Fubini theorem gives
\[
\lim_{r \to 1} \int_{0}^{2\pi} \bigg( \int_{0}^{2\pi} \log \left| \frac{\rho e^{i\theta} - \phi(re^{it})}{1-\rho e^{-i\theta}\phi(re^{it})} \right| \, dt  \bigg) \, d\theta = 0.
\]
Set
\[
M(r,\theta) := \int_{0}^{2\pi} -\log \left| \frac{\rho e^{i\theta} - \phi(re^{it})}{1-\rho e^{-i\theta}\phi(re^{it})} \right| \, dt.
\]
Then $M(r,\theta) \geq 0$ for all $r,\theta$, and
\begin{equation} \label{E:frosmean0-1}
\lim_{r \to 1} \int_{0}^{2\pi} M(r,\theta) \, d\theta = 0.
\end{equation}
Now, we put together two facts. 
First, according to Lemma \ref{T:tt-meanib}, 
we know that, for each  $\theta$,
\[
\lim_{r \to 1} M(r,\theta)
\]
exists. Second, by Fatou's lemma,
\[
\int_{0}^{2\pi}\bigg(  \liminf_{r \to 1} M(r,\theta)  \bigg) \, d\theta
\leq
\liminf_{r \to 1} \int_{0}^{2\pi} M(r,\theta) \, d\theta.
\]
Hence, by (\ref{E:frosmean0-1}) and the fact that $M(r,\theta) \geq 0$, we conclude that
\[
\int_{0}^{2\pi}\bigg(  \lim_{r \to 1} M(r,\theta)  \bigg) \, d\theta = 0.
\]
In particular, we must have $\lim_{r \to 1} M(r,\theta) = 0$ for almost all $\theta \in [0,2\pi]$, i.e.,
\[
\lim_{r \to 1} \int_{0}^{2\pi}   \log \big|
\phi_{\rho e^{i\theta}}(re^{it}) \big|  \, dt = 0
\]
for almost all $\theta \in [0,2\pi]$. 
Therefore, again by Lemma \ref{T:tt-meanib}, 
$\phi_{\rho e^{i\theta}}$ is indeed a Blaschke product 
for almost all $\theta \in [0,2\pi]$. 
In other words, $\mathcal{E}_\rho(\phi)$ has Lebesgue measure zero.\qed
\end{proof}

The preceding result immediately implies 
the approximation theorem that we are seeking. 
It shows that the set $\mathbf{BP}$ of Blaschke products is uniformly dense 
in the set of all inner functions $\mathbf{I}$.

\begin{thm}[Frostman \cite{MR0012127}] \label{T:frostman-approx}
Let $\phi$ be an inner function in the open unit disc. 
Then, given $\varepsilon>0$, there is a Blaschke product $B$ such that
\[
\|\phi-B\|_\infty < \varepsilon.
\]
\end{thm}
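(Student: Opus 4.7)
The plan is to exploit Lemma~\ref{T:frostman-shifts-1} by choosing a Frostman shift $\phi_w$ with $|w|$ small, so that $\phi_w$ is uniformly close to $-\phi$ on $\D$, and then picking $w$ in a set of positive measure on the circle of radius $\rho$ so that $\phi_w$ is genuinely a Blaschke product. Multiplying by the unimodular constant $-1$ (which is itself a Blaschke product factor) then produces the desired approximation to $\phi$.

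Concretely, the first step is a direct algebraic estimate: for every $w \in \D$ and every $z \in \D$,
\[
\phi_w(z) + \phi(z) = \frac{w - \phi(z)}{1 - \overline{w}\phi(z)} + \phi(z) = \frac{w - \overline{w}\phi(z)^2}{1 - \overline{w}\phi(z)},
\]
and since $|\phi(z)| \leq 1$, this yields the uniform bound
\[
\|\phi_w + \phi\|_\infty \leq \frac{2|w|}{1-|w|}.
\]
Given $\varepsilon>0$, I would then fix $\rho\in(0,1)$ small enough that $2\rho/(1-\rho)<\varepsilon$.

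The second step is to invoke Lemma~\ref{T:frostman-shifts-1}: the exceptional set $\mathcal{E}_\rho(\phi)\subset\T$ has Lebesgue measure zero, hence its complement is nonempty (in fact of full measure). Pick any $e^{i\theta_0}\notin\mathcal{E}_\rho(\phi)$, set $w_0 := \rho e^{i\theta_0}$, and let $B := -\phi_{w_0}$. Since $\phi_{w_0}$ is a Blaschke product by the choice of $\theta_0$, and the family of Blaschke products is closed under multiplication by unimodular constants, $B$ is also a Blaschke product. Combining with the estimate from the first step,
\[
\|\phi - B\|_\infty = \|\phi + \phi_{w_0}\|_\infty \leq \frac{2\rho}{1-\rho} < \varepsilon,
\]
which finishes the proof.

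There is no real obstacle here, since the heavy lifting has already been done in Lemma~\ref{T:frostman-shifts-1}; the only points to be careful about are: (i) that the algebraic identity really does give a uniform bound on all of $\D$, which follows from $|\phi|\le 1$ together with $|1-\overline{w}\phi|\ge 1-|w|$; and (ii) that absorbing the minus sign is legitimate, i.e.\ that $-1$ counts as a unimodular constant factor permitted in a Blaschke product. Both are routine, so the theorem is an essentially immediate corollary of the Frostman shift lemma.
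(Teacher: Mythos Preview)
Your proof is correct and essentially identical to the paper's: both choose $\rho$ with $2\rho/(1-\rho)<\varepsilon$, use Lemma~\ref{T:frostman-shifts-1} to select $w_0=\rho e^{i\theta_0}$ for which $\phi_{w_0}$ is a Blaschke product, exploit the same algebraic identity to bound $\|\phi+\phi_{w_0}\|_\infty$, and set $B:=-\phi_{w_0}$.
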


\begin{proof}
Take $\rho \in (0,1)$ small enough so that $2\rho/(1-\rho) < \varepsilon$. 
According to Lemma \ref{T:frostman-shifts-1}, 
on the circle $\{|z|=\rho\}$ 
there are many candidates $\rho e^{i\theta}$ such that 
$\phi_{\rho e^{i\theta}}$ is a Blaschke product. 
Pick any one of them. Then, we have
\[
|\phi(z)+\phi_{\rho e^{i\theta}}(z)| = \bigg|  \frac{\rho
e^{i\theta}-\rho e^{-i\theta}\phi^2(z)}{1-\rho e^{-i\theta}\phi(z)}
 \bigg| \leq \frac{2\rho}{1-\rho} < \varepsilon
\]
for all $z \in \D$. This simply means that
$\|\phi+\phi_{\rho e^{i\theta}}\|_\infty < \varepsilon$.
Now take $B:=-\phi_{\rho e^{i\theta}}$.\qed
\end{proof}

Frostman's approximation result (Theorem \ref{T:frostman-approx}) 
should be compared with Carath\'eodory theorem (Theorem \ref{T:caratheodory-fbp}). 
On one hand, the approximation in Frostman's result is stronger. 
The convergence is uniform on $\D$, 
and not just on a fixed compact subset of $\D$. 
But, on the other hand, it only applies 
to a smaller class of functions (inner functions) 
in the closed unit disc of $H^\infty$.

Theorem \ref{T:frostman-approx} may also be considered 
as a generalization of Theorem \ref{T:approximation-D-FBP}. 
In the latter, we consider a small set of Blaschke products (just finite Blaschke products) 
and thus we are not able to approximate all inner functions. 
But Frostman says that, if we enlarge our set 
and consider all Blaschke products, 
then we can approximate all inner functions. 

However, though this interpretation is true, it is not the whole truth. 
Theorem \ref{T:approximation-D-FBP} says that 
the set of finite Blaschke products is a closed subset of $\partial\mathbf{B}_{H^\infty}$. 
Then Theorem \ref{T:frostman-approx} says that 
its complement in the family of inner functions, i.e., $\mathbf{I} \setminus \mathbf{FBP}$, 
is also a closed subset of $\partial\mathbf{B}_{H^\infty}$ 
such that infinite Blaschke products are uniformly dense in $\mathbf{I} \setminus \mathbf{FBP}$. 
In fact, by considering zeros, it is easy to see that
\[
\mbox{dist}_{\infty} (\mathbf{FBP},~\mathbf{I} \setminus \mathbf{FBP}) \geq 1,
\]
i.e., both parts are well separated on the boundary of $\mathbf{B}_{H^\infty}$.


\section{Existence of unimodular functions in the coset $f + H^\infty(\T)$}

To study duality on Hardy spaces, 
we recall some well-known facts from functional analysis. 
Let $X$ be a Banach space, and let $X^*$ denote its dual space. 
Let $A$ be a closed subspace of $X$. The \emph{annihilator} of $A$ is
\[
A^\perp := \{   \Lambda \in X^* :   \, \Lambda(a)
= 0 \text{~for all~} a \in A \},
\]
which is a closed subspace of $X^*$. 
The canonical projection of $X$ onto the quotient space $X/A$ is defined by
\[
\begin{array}{cccc}
\pi: & X & \longrightarrow & X/A\\
& x & \longmapsto & x+A.
\end{array}
\]
For each $x \in X$, by the definition of norm in the quotient space $X/A$,  we  have
\begin{equation} \label{D:distaA}
\dist(x,A) = \inf_{a \in A} \|x-a\| = \| \pi (x) \|_{_{X/A}}.
\end{equation}
Using the Hahn--Banach theorem from functional analysis, we have
\[
\dist(x,A) = \sup_{\Lambda \in A^\perp,
\|\Lambda\|_{_{X^*}}=1}    |\Lambda(x)|.
\]
Moreover, the supremum is attained, i.e., there is $\Lambda_0 \in
A^\perp$ with $\|\Lambda_0\|_{_{X^*}}=1$ such that
\[
\dist(x,A) = \Lambda_0(x).
\]
Thanks to these remarks, we obtain the dual identifications 
\[
(X/A)^* = A^\perp
\quad\text{and}\quad
A^* = X^*/A^\perp.
\]

For a Banach space  $X$ of functions defined on the unit circle $\T$, 
we define $X_0$ to be the family of all functions $e^{i\theta} f(e^{i\theta})$
such that $f \in X$. 
In all cases that we consider below, 
$X_0$ is a closed subspace of $X$. 
If $f \in X$ has a holomorphic extension to the open unit disc, 
then the holomorphic extension of $e^{i\theta} f(e^{i\theta})$ would be $zf(z)$, 
a function having a zero at the origin. 
This fact explains the notation $X_0$.

The following lemma summarizes a number of dual identifications
 of interest to us.

\begin{lem}[\protect{\cite[\S IV.1]{MR2261424}}] \label{L:dualoflp/hp}
Let $1 \leq p<\infty$, and let $1/p+1/q=1$. Then:
\begin{enumerate}[\rm(a)]
\item $(  L^p/H^p  )^* = H_0^q$,
\item $(  L^p/H^p_0  )^* = H^q$,
\item $(  H^p  )^* = L^q/H^q_0$,
\item $(  H^p_0  )^* = L^q/H^q$.
\end{enumerate}
\end{lem}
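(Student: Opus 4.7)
The plan is to deduce all four identifications from the classical duality $(L^p)^*=L^q$ (under the bilinear pairing
\[
\langle f,g\rangle:=\frac{1}{2\pi}\int_0^{2\pi} f(e^{i\theta})g(e^{i\theta})\,d\theta)
\]
combined with the two general principles recalled in the paragraph preceding the lemma: for any closed subspace $A$ of a Banach space $X$, one has $(X/A)^* \cong A^{\perp}$ and $A^* \cong X^*/A^{\perp}$. Once the annihilators of $H^p$ and $H^p_0$ inside $L^q$ are computed, all four statements will drop out mechanically.

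The heart of the argument is therefore to establish
\[
(H^p)^{\perp} = H^q_0 \qquad\text{and}\qquad (H^p_0)^{\perp} = H^q
\]
as subspaces of $L^q$. For the inclusion $H^q_0 \subseteq (H^p)^{\perp}$, given $g\in H^q_0$ and $h\in H^p$, H\"older's inequality yields $gh\in L^1$, and in fact $gh\in H^1_0$: the product of Hardy functions with conjugate exponents is Hardy (approximate via the dilations $g_r(\zeta):=g(r\zeta)$ and $h_r(\zeta):=h(r\zeta)$, which are analytic on a neighbourhood of $\overline{\D}$, and pass to the limit in $L^1$), and the value of the analytic extension of $gh$ at the origin is $g(0)h(0)=0$ since $g\in H^q_0$. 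Hence $\langle g,h\rangle = \widehat{gh}(0)=0$. For the reverse inclusion, if $g\in L^q$ annihilates $H^p$, then evaluating the pairing on the monomials $\zeta^n\in H^p$ ($n\ge 0$) gives $\hat g(-n)=0$ for every $n\ge 0$, i.e.\ $\hat g(k)=0$ for all $k\le 0$, which is exactly $g\in H^q_0$. The proof that $(H^p_0)^{\perp}=H^q$ is entirely analogous, using the monomials $\zeta^n$ for $n\ge 1$ in place of $n\ge 0$.

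With the two annihilator identifications in hand, the four dualities follow at once: part (a) from $(L^p/H^p)^*\cong (H^p)^{\perp}=H^q_0$; part (b) from $(L^p/H^p_0)^*\cong (H^p_0)^{\perp}=H^q$; part (c) from $(H^p)^*\cong L^q/(H^p)^{\perp}=L^q/H^q_0$; and part (d) from $(H^p_0)^*\cong L^q/(H^p_0)^{\perp}=L^q/H^q$. No substantial obstacle is anticipated. The only mildly delicate ingredient is the classical fact that a product of two Hardy-space functions with conjugate exponents lies in $H^1$; the rest is purely formal bookkeeping with Fourier coefficients and the two abstract duality principles quoted above.
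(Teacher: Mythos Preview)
Your argument is correct and is the standard one: compute the annihilators of $H^p$ and $H^p_0$ in $L^q$ via Fourier coefficients, then invoke the two abstract identifications $(X/A)^*\cong A^\perp$ and $A^*\cong X^*/A^\perp$ already recalled in the paper. Note, however, that the paper does not actually prove this lemma; it merely records the statement and cites \cite[\S IV.1]{MR2261424}, so there is no ``paper's own proof'' to compare against---your sketch is essentially what one finds in the cited reference.
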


We can apply this method to study 
$\dist(f,H^p(\T)  )$, 
where $f$ is an element of $L^p(\T)$ and $1 \leq p \leq \infty$. 
In the following, we just need the case $p=\infty$.

\begin{thm} \label{T:aooxl1inftyhinfty}
Let $f \in L^\infty(\T)$.  Then the following hold.
\begin{enumerate}[\rm(a)]
\item There exists $g \in H^\infty(\T)$ such that
\[
\dist(  f,H^\infty(\T)  ) = \|f -g\|_\infty.
\]
\item We have
\[
\dist(  f,H^\infty(\T)  ) = \sup_{h \in H^1_0(\T),
\|h\|_1=1} \bigg|
\frac{1}{2\pi}   \int_{0}^{2\pi} f(e^{i\theta})    h(e^{i\theta})   \, d\theta  \bigg|.
\]
\end{enumerate}
\end{thm}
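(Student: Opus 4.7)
The plan is to deduce both parts from the dual identification $(H_0^1)^* = L^\infty/H^\infty$ given by Lemma \ref{L:dualoflp/hp}(d) (with $p=1$, $q=\infty$), where the pairing is $\langle h,[f]\rangle=\frac{1}{2\pi}\int_0^{2\pi}f(e^{i\theta})h(e^{i\theta})\,d\theta$. This realization of $L^\infty/H^\infty$ as a dual space is exactly what is needed both to express the distance as a supremum over a predual unit ball, and to obtain the existence of a best approximant via weak-$*$ compactness.

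For part (b), I would simply combine \eqref{D:distaA} with Lemma \ref{L:dualoflp/hp}(d). Namely, by \eqref{D:distaA},
\[
\dist(f,H^\infty(\T))=\|\pi(f)\|_{L^\infty/H^\infty},
\]
and by the duality $(H_0^1)^*=L^\infty/H^\infty$ realized by the standard pairing, this norm is exactly
\[
\sup_{h\in H_0^1(\T),\,\|h\|_1=1}\bigl|\langle h,\pi(f)\rangle\bigr|
=\sup_{h\in H_0^1(\T),\,\|h\|_1=1}\bigg|\frac{1}{2\pi}\int_0^{2\pi}f(e^{i\theta})h(e^{i\theta})\,d\theta\bigg|.
\]
(The fact that $h\in H_0^1$ kills every element of $H^\infty$ under this pairing — so that the pairing descends to the quotient — is a direct Fourier-coefficient check.)

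For part (a), the strategy is a standard Banach--Alaoglu argument in the weak-$*$ topology of $L^\infty(\T)=L^1(\T)^*$. First I would observe that $H^\infty(\T)$ is weak-$*$ closed in $L^\infty(\T)$: it is precisely the set of $f\in L^\infty$ with $\hat f(n)=0$ for all $n<0$, and each Fourier coefficient $f\mapsto\hat f(n)$ is weak-$*$ continuous (the corresponding $e^{-in\theta}$ lies in $L^1$). Equivalently, $H^\infty$ is the annihilator of $H_0^1\subset L^1$ under the duality. Next, let $d:=\dist(f,H^\infty(\T))$ and choose a minimizing sequence $g_n\in H^\infty$ with $\|f-g_n\|_\infty\to d$; then $\|g_n\|_\infty\le\|f\|_\infty+d+1$ eventually, so by Banach--Alaoglu some subsequence converges weak-$*$ to a $g\in L^\infty$. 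Since $H^\infty$ is weak-$*$ closed, $g\in H^\infty(\T)$. Since the $L^\infty$ norm $u\mapsto\|u\|_\infty=\sup_{\|h\|_1\le 1}|\frac{1}{2\pi}\int uh\,d\theta|$ is a supremum of weak-$*$ continuous functionals, it is weak-$*$ lower semicontinuous, so
\[
\|f-g\|_\infty\le\liminf_k\|f-g_{n_k}\|_\infty=d,
\]
and the reverse inequality holds by definition of $d$. This gives the required best approximant.

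I do not expect any serious obstacle: the whole argument is a packaging of standard duality and weak-$*$ compactness, once one has the pre-dual identification from Lemma \ref{L:dualoflp/hp}. The only point requiring any care is the weak-$*$ closedness of $H^\infty$ in $L^\infty$, but this is immediate either by the Fourier-coefficient description or by recognising $H^\infty$ as $(H_0^1)^\perp$ in the $(L^1,L^\infty)$ duality.
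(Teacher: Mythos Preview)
Your proposal is correct and follows essentially the same approach as the paper. Part~(b) is identical: both you and the paper read the formula off the duality $(H_0^1)^*=L^\infty/H^\infty$ from Lemma~\ref{L:dualoflp/hp}(d). For part~(a), both arguments take a minimizing sequence and use Banach--Alaoglu plus weak-$*$ lower semicontinuity of the norm; the only cosmetic differences are that the paper applies Banach--Alaoglu directly in $H^\infty=(L^1/H_0^1)^*$ (via Lemma~\ref{L:dualoflp/hp}(b)) rather than in $L^\infty=(L^1)^*$ together with weak-$*$ closedness of $H^\infty$, and the paper spells out the lower-semicontinuity step by testing against characteristic functions $\chi_E\in L^1$ rather than invoking it abstractly.
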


\begin{proof}
(a) By (\ref{D:distaA}), there are $g_n \in H^\infty(\T)$,
$n \geq 1$, such that
\[
\dist(  f,H^\infty(\T)  ) = \lim_{n \to \infty}
\|f-g_n\|_\infty.
\]
Hence,  $(\|g_n\|_1)_{n \geq 1}$ is a bounded sequence in $H^\infty(\T)$. 
By Lemma \ref{L:dualoflp/hp}(b), 
$H^\infty(\T)$ is the dual of $L^1(\T)/H^1_0(\T)$. 
Hence, looking at the sequence $(g_n)_{n \geq 1}$ 
as a family of uniformly bounded linear functionals on
$L^1(\T)/H^1_0(\T)$, 
by the Banach--Alaoglu theorem, 
we can extract a subsequence that is convergent in the
weak* topology of $H^\infty(\T)$. 
More explicitly, 
there exists $g \in H^\infty(\T)$ and a
subsequence $(n_k)_{k \geq 1}$ such that
\[
\lim_{k \to \infty} \frac{1}{2\pi}   \int_{0}^{2\pi} h( e^{i\theta} )    g_{n_k}(
e^{i\theta} )  \, d\theta = \frac{1}{2\pi}   \int_{0}^{2\pi} h(
e^{i\theta} )    g( e^{i\theta} )  \, d\theta
\]
for all $h \in L^1(\T)$. By H\"older's inequality,
\[
\bigg|  \frac{1}{2\pi}   \int_{0}^{2\pi} h( e^{i\theta} )    (  f( e^{i\theta}
)-g_{n_k}( e^{i\theta} )  )  \, d\theta  \bigg|
\leq \|h\|_1    \| f-g_{n_k} \|_\infty.
\]
Let $k \longrightarrow \infty$ to get
\[
\bigg|  \frac{1}{2\pi}   \int_{0}^{2\pi} h( e^{i\theta} )    (  f( e^{i\theta}
)-g( e^{i\theta} )  )  \, d\theta  \bigg| \leq
\|h\|_1    \dist(  f,H^\infty(\T)  )
\]
for all $h \in L^1(\T)$. 

If  $L^1(\T)$ were the dual of $L^\infty(\T)$, 
then we would have been able to use duality techniques 
and the reasoning would have been easier. 
But, since $L^1(\T)$ is a proper subclass of the dual of $L^\infty(\T)$,
we have to proceed differently.

If $E$ is a measurable subset of $\T$, 
then its characteristic function $h = \chi_{_{E}}$ is integrable. 
Hence, with this choice, we obtain
\[
\bigg|  \frac{1}{|E|}    \int_{E} (  f( e^{i\theta} )-g(
e^{i\theta} )  )   \, d\theta  \bigg| \leq
\dist(  f,H^\infty(\T)  )
\]
for all measurable sets $E \subset \T$ 
with $|E| = \int_{E} d\theta \ne 0$. 
This is enough to conclude $\| f-g \|_\infty \leq \dist(f,H^\infty(\T)  )$. 
Note that the reverse inequality $\dist(  f,H^\infty(\T)  ) \leq \| f-g\|_\infty$ 
is a direct consequence of the definition of $\dist(  f,H^\infty(\T)  )$.

(b) By definition,
\[
\dist(  f,H^\infty(\T)  ) =
\|f+H^\infty(\T)\|_{L^\infty(\T)/H^\infty(\T)},
\]
and, by Lemma \ref{L:dualoflp/hp}($d$), we have
$L^\infty(\T)/H^\infty(\T)= (  H^1_0(\T)  )^*$. 
Hence
\begin{eqnarray*}
\dist(  f,H^\infty(\T)  ) &=& \|f+H^\infty(\T)\|_{(H^1_0(\T)  )^*} \\
&=& \sup_{h \in H^1_0(\T),   \|h\|_1=1}    \bigg|
\frac{1}{2\pi}   \int_{0}^{2\pi} f( e^{i\theta} )    h( e^{i\theta} )  \, d\theta  \bigg|.
\end{eqnarray*}
This completes the proof.\qed
\end{proof}

Let $f \in L^\infty(\T)$. 
If the coset $f + H^\infty(\T)$ contains a unimodular element, 
then necessarily $\dist( f,H^\infty(\T)  ) \leq 1$. 
A profound result of Adamjan--Arov--Krein says that,
under the slightly more restrictive condition $\dist( f,H^\infty(\T)  )<1$, 
the reverse implication holds. 
In this section, we discuss this result, 
which will be needed in studying the closed convex hull of Blaschke products. 
We start with a technical lemma.

\begin{lem} \label{L:fninh1cgt0one}
Let $f_n \in H^1(\D)$ with $\|f_n\|_1 \leq 1$, $n \geq 1$.
Suppose that there is a measurable subset $E$ of $\T$ with $|E| \ne 0$ such that
\[
\lim_{n \to \infty} \int_{E} |f_n( e^{i\theta} )|  \, d\theta = 0.
\]
Then
\[
\lim_{n \to \infty}  f_n( 0 )  = 0.
\]
\end{lem}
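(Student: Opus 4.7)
The plan is to exploit the fact that $f_n \in H^1$ through the subharmonicity of $\log|f_n|$, which gives the Jensen-type inequality
\[
\log|f_n(0)| \leq \frac{1}{2\pi}\int_0^{2\pi} \log|f_n(e^{i\theta})|\,d\theta,
\]
valid whenever $f_n \not\equiv 0$ (if $f_n \equiv 0$ the conclusion is trivial, and an $H^1$-function vanishing on a set of positive measure must be identically zero). The strategy is then to show that the right-hand side tends to $-\infty$, which forces $f_n(0) \to 0$.

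To control the right-hand side, I would split the integral as $\int_E + \int_{\T\setminus E}$. On $E$, the concavity of $\log$ (Jensen's inequality in its classical form) yields
\[
\frac{1}{|E|}\int_E \log|f_n(e^{i\theta})|\,d\theta \leq \log\!\left(\frac{1}{|E|}\int_E |f_n(e^{i\theta})|\,d\theta\right),
\]
so the hypothesis $\int_E |f_n|\,d\theta \to 0$ drives this term to $-\infty$. On $\T\setminus E$, I would use the elementary bound $\log x \leq x - 1$ (valid for $x > 0$) to obtain
\[
\int_{\T\setminus E} \log|f_n(e^{i\theta})|\,d\theta \leq \int_{\T\setminus E} \bigl(|f_n(e^{i\theta})| - 1\bigr)\,d\theta \leq 2\pi\|f_n\|_1 - (2\pi - |E|) \leq |E|,
\]
so this part is uniformly bounded above independently of $n$.

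Combining the two estimates, one concludes
\[
2\pi\log|f_n(0)| \leq |E|\log\!\left(\frac{1}{|E|}\int_E |f_n(e^{i\theta})|\,d\theta\right) + |E|,
\]
and the right-hand side tends to $-\infty$ as $n \to \infty$. Hence $|f_n(0)| \to 0$, as required.

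The main thing to watch out for is making sure everything is legitimate when $\log|f_n|$ is $-\infty$ on a non-null subset of $E$; but as noted above, this forces $f_n \equiv 0$ and hence $f_n(0) = 0$. Once that edge case is dispatched, the argument is essentially a clean application of Jensen's inequality together with the $L^1$-norm bound, and the hypothesis feeds directly into making the dominant term blow up to $-\infty$.
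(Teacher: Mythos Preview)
Your proof is correct and follows essentially the same route as the paper's: subharmonicity of $\log|f_n|$ gives $\log|f_n(0)|\le \frac{1}{2\pi}\int_{0}^{2\pi}\log|f_n|\,d\theta$, and Jensen's inequality on $E$ drives the integral to $-\infty$. The only cosmetic difference is the bound on $\T\setminus E$: the paper applies Jensen's inequality a second time there (which forces a separate treatment of the case $|E|=2\pi$, since $|\T\setminus E|$ appears in a denominator), whereas your use of $\log x\le x-1$ handles all cases at once.
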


\begin{proof}
If $|E| = 2\pi$, then the result is an immediate consequence of
the identity
\[
f_n( 0 ) = \frac{1}{2\pi}   \int_{\T} f_n( e^{i\theta} )  \, d\theta
= \frac{1}{2\pi}   \int_{E} f_n( e^{i\theta} )  \, d\theta.
\]
If $0 < |E| < 2\pi$, then, on the one hand,
\[
\frac{1}{|E|} \int_{E} \log |f_n( e^{i\theta} )|  \, d\theta \leq
\log\bigg(  \frac{1}{|E|} \int_{E} |f_n( e^{i\theta} )|  \,
d\theta  \bigg) \longrightarrow -\infty,
\]
as $n \longrightarrow \infty$, and, on the other hand,
\begin{eqnarray*}
\frac{1}{|\T \setminus E|} \int_{\T \setminus E} \log |f_n(
e^{i\theta} )|  \, d\theta &\leq& \log\bigg(  \frac{1}{|\T
\setminus E|} \int_{\T \setminus E} |f_n( e^{i\theta} )|  \,
d\theta  \bigg) \\
&\leq& \log\bigg(  \frac{ \|f_n\|_1}{|\T \setminus E|}  \bigg)
\leq \log\bigg(  \frac{1}{|\T \setminus E|}  \bigg).
\end{eqnarray*}
Therefore,
\[
\lim_{n \to \infty}  \int_{0}^{2\pi} \log|f_n( e^{i\theta} )|  \,
d\theta  = -\infty.
\]
Finally, since $\log|f_n|$ is a subharmonic function, we have
\[
|f_n(0)| \leq \exp\bigg(   \frac{1}{2\pi}   \int_{0}^{2\pi} \log|f_n( e^{i\theta})|  \, d\theta   \bigg),
\]
and thus $f_n(0) \longrightarrow 0$ as $n \longrightarrow \infty$.\qed
\end{proof}

The space $H^\infty(\T)$ contains all inner functions, which are elements of modulus one. 
The following result shows that if we slightly perturb $H^\infty(\T)$ in $L^\infty(\T)$, 
it still contains unimodular elements.

\begin{thm}[Adamjan--Arov--Krein \cite{MR0636333,MR0234274,MR0298454}] \label{T:adamarovkrien}
Let $f \in L^\infty(\T)$ be such that
\[
\dist( f,H^\infty(\T)  )<1.
\]
Then there exists an $\omega \in f + H^\infty(\T)$ with
\[
|\omega(e^{i\theta})| = 1
\]
for almost all $e^{i\theta} \in \T$.
\end{thm}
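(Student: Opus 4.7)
First I would reduce to the case where $\|F\|_\infty = \delta$. By Theorem~\ref{T:aooxl1inftyhinfty}(a), pick $g_0 \in H^\infty(\T)$ with $\|f - g_0\|_\infty = \delta := \dist(f, H^\infty(\T)) < 1$, and set $F := f - g_0$. Since $F + H^\infty(\T) = f + H^\infty(\T)$ and $\|F\|_\infty = \delta$, it suffices to find a unimodular element of $F + H^\infty(\T)$.

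Next I would extract an extremal $h_\ast \in H^1_0(\T)$ for the dual problem in Theorem~\ref{T:aooxl1inftyhinfty}(b). Take a maximizing sequence $h_n \in H^1_0(\T)$ with $\|h_n\|_1 = 1$ and, after a unimodular rotation of $F$, $\frac{1}{2\pi}\int_0^{2\pi} F h_n \, d\theta \to \delta$. Using the inner-outer factorization $h_n = z b_n g_n^2$ with $b_n$ inner and $g_n \in H^2$ outer of norm one, pass via Banach--Alaoglu to subsequences with $b_n \rightharpoonup^* b_\ast$ weak-$\ast$ in $L^\infty(\T)$ and $g_n \rightharpoonup g_\ast$ weakly in $H^2$. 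The candidate limit is $h_\ast := z b_\ast g_\ast^2 \in H^1_0(\T)$, and the role of Lemma~\ref{L:fninh1cgt0one} is to prevent $L^1$-mass from escaping to sets of small measure, thereby forcing $\|h_\ast\|_1 = 1$ and $\frac{1}{2\pi}\int_0^{2\pi} F h_\ast \, d\theta = \delta$.

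The chain
\[
\delta = \frac{1}{2\pi}\int_0^{2\pi} F h_\ast \, d\theta \leq \frac{1}{2\pi}\int_0^{2\pi} |F|\,|h_\ast| \, d\theta \leq \|F\|_\infty \|h_\ast\|_1 = \delta
\]
then forces equality throughout. Since the nonzero $H^1$-function $h_\ast$ is nonvanishing a.e. on $\T$ (F.~and M.~Riesz theorem), one obtains $|F| = \delta$ and $F h_\ast = \delta |h_\ast|$ a.e., whence $F = \delta\, \overline{h_\ast}/|h_\ast|$ a.e.~on~$\T$.

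Finally I would construct $\omega$ from this rigid pointwise formula. Substituting $h_\ast = z b_\ast g_\ast^2$ and the outer identity $\overline{g_\ast}/g_\ast = |g_\ast|^2/g_\ast^2$ on $\T$ yields $F = \delta\, \overline{z}\, \overline{b_\ast}\, \overline{g_\ast}/g_\ast$. The naive choice $\omega := F/\delta$ is unimodular but satisfies $\omega - F = (1 - \delta) F/\delta$, which lies in $H^\infty(\T)$ only if $F$ itself does; the correct $\omega$ must instead cancel the negative Fourier coefficients produced by the factor $\overline{z b_\ast}$, exploiting the freedom to multiply by the inner function $z b_\ast$ together with the nonvanishing of the outer function $g_\ast$ in~$\D$. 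This last step -- translating the algebraic identity $F h_\ast = \delta |h_\ast|$ into the analytic condition $\omega - F \in H^\infty(\T)$ -- is the main obstacle and constitutes the essential content of the Adamjan--Arov--Krein theorem.
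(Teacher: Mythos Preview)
Your proposal has two genuine gaps, and you yourself flag the second one.

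\textbf{Gap 1: existence of the dual extremal $h_\ast$.} The unit ball of $H^1_0(\T)$ is not weak-$\ast$ compact in any useful sense (it is not a dual space), so a maximizing sequence need not have a limit attaining the supremum. Your work-around---factoring $h_n=zb_ng_n^2$ and passing to weak limits of the factors separately---does not yield $h_n\to zb_\ast g_\ast^2$ in any topology that preserves $\int Fh_n$: products of weak limits are not weak limits of products. Invoking Lemma~\ref{L:fninh1cgt0one} ``to prevent $L^1$-mass from escaping to sets of small measure'' is a misreading of that lemma. Its contrapositive says only that if $h_n(0)\not\to0$ then $\int_E|h_n|\not\to0$ for each \emph{fixed} set $E$ of positive measure; this is far weaker than uniform integrability, since mass can still concentrate on a shrinking sequence of sets. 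In fact, for general $f\in L^\infty(\T)$ the dual extremal in $H^1_0$ need not exist, and when it fails your intermediate conclusion $|F|=\delta$ a.e.\ is false.

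\textbf{Gap 2: the final construction.} You explicitly concede that turning the pointwise identity $Fh_\ast=\delta|h_\ast|$ into a unimodular $\omega\in F+H^\infty(\T)$ is ``the main obstacle and constitutes the essential content'' of the theorem. That is an outline, not a proof.

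The paper (following Garnett) sidesteps both difficulties by solving a \emph{primal} extremal problem instead. One maximizes $\bigl|\tfrac{1}{2\pi}\int_0^{2\pi}\omega\,d\theta\bigr|$ over $\mathcal{E}=\{\omega\in f+H^\infty(\T):\|\omega\|_\infty\le1\}$; a maximizer $\omega$ exists by weak-$\ast$ compactness in $L^\infty(\T)$, so no $H^1$ extraction is needed. Elementary variations show $\|\omega\|_\infty=1$ and $\dist(\omega,H^\infty_0(\T))=1$. One then takes a dual maximizing \emph{sequence} $h_n\in H^1(\T)$ with $\int\omega h_n\to1$---never a limit---and uses Lemma~\ref{L:fninh1cgt0one} in the contrapositive direction: the hypothesis $\dist(f,H^\infty)<1$ forces $\liminf|h_n(0)|>0$, hence $\liminf\int_E|h_n|>0$ for every fixed $E$ of positive measure. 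Combined with $\int\omega h_n\to1$ and $\|\omega\|_\infty\le1$, this forces each sublevel set $\{|\omega|\le\lambda\}$, $\lambda<1$, to be null. Thus the primal extremal $\omega$ is itself the sought unimodular element, and no separate construction step is required.
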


\begin{proof}
(Garnett \cite[page 150]{MR1669574}) The proof is long and thus we divide it into several steps.

\medbreak
\noindent\emph{Step 1:} Definition of $\omega$ as the solution of an extremal problem.

\medskip
Since $\dist(  f,H^\infty(\T) )<1$, the set
\[
\mathcal{E} := \{  \omega  :  \omega \in f + H^\infty(\T),
\|\omega\|_\infty \leq 1  \}
\]
is not empty. Let
\[
\alpha 
:= \sup_{\omega \in \mathcal{E}}  \bigg|  \frac{1}{2\pi}   \int_{0}^{2\pi} \omega(e^{i\theta} )  \, d\theta  \bigg|.
\]
We show that the supremum is attained. 
There are $\omega_n = f+g_n \in \mathcal{E}$, $n \geq 1$, such that
\[
\lim_{n \to \infty} \bigg|  \frac{1}{2\pi}   \int_{0}^{2\pi} \omega_n(e^{i\theta} )  \, d\theta  \bigg| = \alpha.
\]
Since $g_n \in H^\infty(\T)$ 
and $\|g_n\|_\infty \leq 1 +\|f\|_\infty$, 
there exist $g \in H^\infty(\T)$ 
and a subsequence $(n_k)_{k \geq 1}$ such that
\begin{equation} \label{E:intg-npartomwgan}
\lim_{k \to \infty} \frac{1}{2\pi}   \int_{0}^{2\pi} h( e^{i\theta} )    g_{n_k}( e^{i\theta} )  \, d\theta 
= \frac{1}{2\pi}   \int_{0}^{2\pi} h(e^{i\theta} )    g( e^{i\theta} )  \, d\theta
\end{equation}
for all $h \in L^1(\T)$. 
Indeed, 
since the $g_n$ are uniformly bounded,
we can say that a subsequence $g_{n_k}$ converges weak*
to an element $g \in L^\infty(\T)$. 
But the weak*-convergence implies $\hat{g}(n) = 0$, $n \leq -1$,
so in fact we have $g \in H^\infty(\T)$.

Set
$\omega := f + g$. By (\ref{E:intg-npartomwgan}),
\[
\lim_{k \to \infty} \frac{1}{2\pi}   \int_{0}^{2\pi} h( e^{i\theta} )
\omega_{n_k}( e^{i\theta} )  \, d\theta =
\frac{1}{2\pi}   \int_{0}^{2\pi} h( e^{i\theta} )    \omega( e^{i\theta} )  \, d\theta
\]
for all $h \in L^1(\T)$. This fact implies
\[
\|\omega\|_\infty \leq \liminf_{k \to \infty} \|\omega_{n_k}\|_\infty \leq 1,
\]
which ensures that $\omega \in \mathcal{E}$. Moreover, taking $h \equiv 1$, we get
\[
\bigg|  \frac{1}{2\pi}   \int_{0}^{2\pi} \omega( e^{i\theta} )  \, d\theta  \bigg| = \alpha,
\]
and thus we can write
\begin{equation} \label{E:Tdefteta0}
\frac{1}{2\pi}   \int_{0}^{2\pi} \omega( e^{i\theta} )  \, d\theta = \alpha   e^{i\theta_0}.
\end{equation}

\medbreak
\noindent\emph{Step 2:} $\| \omega \|_\infty = 1$.

\medskip
Let $\omega_1:= \omega +(1-\| \omega \|_\infty)  e^{i\theta_0}$. 
Thus $\omega_1 \in \mathcal{E}$ and, by the definition of $\alpha$,
\[
\bigg|  \frac{1}{2\pi}   \int_{0}^{2\pi}  \omega_1( e^{i\theta} )  \,
d\theta  \bigg| \leq \alpha.
\]
But, by (\ref{E:Tdefteta0}),
\[
\bigg|  \frac{1}{2\pi}   \int_{0}^{2\pi}  \omega_1( e^{i\theta} )  \,
d\theta  \bigg| = |\alpha   e^{i\theta_0}+ (1-\|
\omega \|_\infty)  e^{i\theta_0}| = \alpha + 1-\| \omega
\|_\infty.
\]
Hence $\| \omega \|_\infty \geq 1$. 
We already know that $\| \omega \|_\infty \leq 1$,
and thus $\| \omega \|_\infty = 1$.

\medbreak 

\noindent\emph{Step 3:} $\dist(  \omega,H^\infty_0(\T)  )=1$.

\medskip
Let
$\varepsilon>0$, let $g \in H^\infty_0(\T)$, 
and set 
$\omega_1: = \omega - g + \varepsilon  e^{i\theta_0}$. 
Then $\omega_1 \in f + H^\infty(\T)$, 
and, by (\ref{E:Tdefteta0}),
\[
\bigg|  \frac{1}{2\pi}   \int_{0}^{2\pi}  \omega_1( e^{i\theta} )  \,
d\theta  \bigg| = |\alpha   e^{i\theta_0}+
\varepsilon  e^{i\theta_0}| = \alpha + \varepsilon > \alpha.
\]
Thus, according to the definition of $\alpha$, 
we have $\omega_1 \not\in \mathcal{E}$. Thus
\[
\|  \omega - g + \varepsilon  e^{i\theta_0}  \|_\infty > 1
\]
for all $\varepsilon>0$ and all $g \in H^\infty_0(\T)$. 
Let $\varepsilon \rightarrow 0$ to get
\[
\|  \omega - g   \|_\infty \geq 1
\]
for all $g \in H^\infty_0(\T)$. 
However, when $g \equiv 0$,
we also know that $\|  \omega   \|_\infty = 1$. 
Hence $\dist(  \omega,H^\infty_0(\T)  )=1$.

\medskip
Before moving on to Step~4, we remark that
Theorem \ref{T:aooxl1inftyhinfty}$(b)$,
applied to the function $e^{-i\theta}\omega(e^{i\theta})$,
 implies that there are
$h_n \in H^1(\T)$, $n \geq 1$, with $\|h_n\|_1=1$, such that
\begin{equation} \label{E:h-nomea-nintto1}
\dist(
\omega,H^\infty_0(\T)  )=\lim_{n \to \infty} \frac{1}{2\pi}   \int_{0}^{2\pi} \omega( e^{i\theta} )
h_n( e^{i\theta} )  \, d\theta = 1.
\end{equation}
The extension of $h_n$ to the open unit disc is also denoted by $h_n$.

\medbreak

\noindent\emph{Step 4:} For all measurable sets $E \subset \T$ with $|E| \ne 0$, we have
\[
\liminf_{n \to \infty} \int_{E} |h_n( e^{i\theta})|  \, d\theta> 0.
\]

\medskip
Since $h_n - h_n(0) \in H^1_0(\T)$, by (the easy part of) Theorem~\ref{T:aooxl1inftyhinfty}$(b)$,
\[
\|h_n - h_n(0)\|_1    \dist(  \omega,H^\infty(\T)  )
\geq \bigg|  \frac{1}{2\pi}   \int_{0}^{2\pi} \omega( e^{i\theta} )  
(h_n(e^{i\theta} ) - h_n(0)  )  \, d\theta  \bigg|,
\]
and, by (\ref{E:Tdefteta0}),
\[
\frac{1}{2\pi}   \int_{0}^{2\pi} \omega( e^{i\theta} )  (  h_n( e^{i\theta} )- h_n(0)  )  \, d\theta 
= \frac{1}{2\pi}   \int_{0}^{2\pi} \omega(e^{i\theta} )     h_n( e^{i\theta} )   \, d\theta- h_n(0)   
\alpha   e^{i\theta_0}.
\]
Thus, we have
\[
(1+|h_n(0)|) \dist(  \omega,H^\infty(\T)  ) 
\geq \bigg|  \frac{1}{2\pi} \int_{0}^{2\pi} \omega(e^{i\theta} )h_n( e^{i\theta} ) \, d\theta \bigg| 
- \alpha   |h_n(0)|,
\]
which yields
\[
|h_n(0)| \geq \frac{\bigg|  \frac{1}{2\pi} \int_{0}^{2\pi} \omega(e^{i\theta} ) h_n( e^{i\theta} )  \, d\theta
 \bigg| - \dist(  \omega,H^\infty(\T)  )}{\alpha 
 + \dist(  \omega,H^\infty(\T)  )}.
\]
Let $n \longrightarrow \infty$ to get, by (\ref{E:h-nomea-nintto1}),
\[
\liminf_{n \to \infty}|h_n(0)| \geq \frac{1-\dist(
\omega,H^\infty(\T)  )}{\alpha+\dist(
\omega,H^\infty(\T)  )}.
\]
But $\dist(  \omega,H^\infty(\T)  )
= \dist(  f,H^\infty(\T)  ) < 1$. 
Hence, $\liminf_{n \to\infty} |h_n(0)| > 0$. 
Now, apply Lemma \ref{L:fninh1cgt0one}.

\medbreak

\noindent\emph{Step 5:}  $\omega$ is unimodular.

\medskip
We know that
$|\omega(e^{i\theta})| \leq 1$ for almost all $e^{i\theta} \in \T$. 
Let $0\leq \lambda<1$ and set
\[
E_\lambda:= \{  e^{i\theta}
\in \T  :  |\omega(e^{i\theta})| \leq \lambda  \}.
\]
Then
\[
\bigg|  \int_{0}^{2\pi} \omega( e^{i\theta} ) h_n(e^{i\theta} ) d\theta  \bigg| 
\leq \lambda   \int_{E_\lambda} |h_n( e^{i\theta} )|  \, d\theta +
 \int_{\T \setminus E_\lambda}  |h_n(e^{i\theta} )|   \, d\theta.
\]
Since $\|h_n\|_1 = 1$,
\[
\frac{1}{2\pi}    \int_{\T \setminus E_\lambda}  |h_n(e^{i\theta} )|   \, d\theta 
= 1 - \frac{1}{2\pi}    \int_{E_\lambda}  |h_n(e^{i\theta} )|   \, d\theta,
\]
and thus
\[
\frac{1-\lambda}{2\pi}   \int_{E_\lambda} |h_n( e^{i\theta} )|  \,d\theta 
\leq 1 - \bigg|  \frac{1}{2\pi}  \int_{0}^{2\pi} \omega(e^{i\theta} )    h_n( e^{i\theta} )   \, d\theta \bigg|.
\]
By (\ref{E:h-nomea-nintto1}), this inequality implies that
\[
\lim_{n \to \infty} \int_{E_\lambda} |h_n( e^{i\theta} )|  \,d\theta = 0.
\]
Therefore, by Step 4, $|E_\lambda| = 0$ for all $0\leq \lambda<1$.\qed
\end{proof}

Theorem
\ref{T:adamarovkrien} has a geometric interpretation. 
Let $\mathcal{U}(\T)$ denote 
the family of all unimodular functions in $L^\infty(\T)$. 
Then Theorem~\ref{T:adamarovkrien} says that 
the open unit ball of $L^\infty(\T)$
is a subset of $H^\infty(\T) + \mathcal{U}(\T)$.

\begin{cor} \label{C:arovadamkrein}
Let $f \in H^\infty(\T)$ with $\|f\|_\infty < 1$, 
and let $\omega$ be an inner function. 
Then $f + \omega H^\infty(\T)$ contains an inner function.
\end{cor}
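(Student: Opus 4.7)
The plan is to reduce the statement to Theorem~\ref{T:adamarovkrien} by pulling out the inner factor $\omega$. Since $\omega$ is unimodular on $\T$, multiplication by $\overline{\omega}$ is an isometry of $L^\infty(\T)$, so I would first consider the function $\overline{\omega}f \in L^\infty(\T)$, which has the same sup-norm as $f$.

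The key observation is that
\[
\dist(\overline{\omega}f, H^\infty(\T)) \leq \|\overline{\omega}f\|_\infty = \|f\|_\infty < 1,
\]
since $0 \in H^\infty(\T)$. Thus $\overline{\omega}f$ satisfies the hypothesis of Theorem~\ref{T:adamarovkrien}, and I may extract a unimodular element $u \in \overline{\omega}f + H^\infty(\T)$. Writing $u = \overline{\omega}f + g$ for some $g \in H^\infty(\T)$ and multiplying both sides by $\omega$, I obtain
\[
\omega u = f + \omega g.
\]

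It remains to verify that $\omega u$ is an inner function. First, $|\omega u| = |\omega|\,|u| = 1$ almost everywhere on $\T$, since both factors are unimodular there. Second, $\omega u = f + \omega g$, where $f \in H^\infty(\T)$ by hypothesis, $\omega \in H^\infty(\T)$ because it is inner, and $g \in H^\infty(\T)$ by construction, so the sum lies in $H^\infty(\T)$. An $H^\infty$ function that is unimodular a.e.\ on $\T$ is by definition inner, so $f + \omega g$ is the desired inner function in the coset $f + \omega H^\infty(\T)$.

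I do not foresee any serious obstacle: the entire argument is a short bookkeeping exercise provided Theorem~\ref{T:adamarovkrien} is in hand. The only subtlety is remembering that although $u$ itself need not be in $H^\infty(\T)$ (it is merely in $L^\infty(\T)$), the product $\omega u$ automatically is, because the ``bad'' part of $u$ is absorbed into $f + \omega g \in H^\infty(\T)$ by construction.
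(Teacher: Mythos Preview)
Your proof is correct and follows essentially the same route as the paper: the paper considers $g:=f/\omega$ (which equals your $\overline{\omega}f$ a.e.\ on $\T$), applies Theorem~\ref{T:adamarovkrien} to obtain a unimodular element in $g+H^\infty(\T)$, multiplies back by $\omega$, and then observes that $f+\omega H^\infty(\T)\subset H^\infty(\T)$ so the resulting unimodular function is inner. Your write-up is slightly more explicit in verifying the last step, but the argument is the same.
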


\begin{proof}
Consider $g:= f/\omega$. 
Then $g \in L^\infty(\T)$ and
\[
\dist(  g,H^\infty(\T)  ) \leq \|g\|_\infty =\|f\|_\infty < 1.
\]
Thus, by Theorem~\ref{T:adamarovkrien}, 
$g+H^\infty(\T)$ contains a unimodular function. 
Therefore, upon multiplying by the inner function $\omega$, 
the set $f + \omega H^\infty(\T)$ 
also contains a unimodular function. 
But $f + \omega H^\infty(\T)\subset H^\infty(\T)$, 
and thus any unimodular function in this set has to be inner.\qed
\end{proof}


\section{Approximation on $\T$ by quotients of inner functions}\label{S:DR}

\begin{center}
\fbox{Goal: $\overline{\mathbf{BP}/\mathbf{BP}} =  \overline{\mathbf{I}/\mathbf{I}} =  \mathbf{U}_{L^\infty(\T)}$}
\end{center}

If $\omega_1$ and $\omega_2$ are inner functions,    
then the quotient $\omega_1/\omega_2$ is unimodular on $\T$. 
But how much  of the family of all unimodular functions on $\T$ do these quotients occupy? 
The Douglas--Rudin theorem provides a satisfactory answer. 
To study this result, we need to examine closely  some special conformal mappings.

Fix the parameter $k$, where $0<k<1$. Let
\begin{eqnarray}
K &:=& \int_{0}^{1} \frac{dt}{\sqrt{(1-t^2)   (1-k^2   t^2)}}, \label{E:defofK}\\
 K' &:=& \int_{0}^{1} \frac{dt}{\sqrt{(1-t^2)   (1-k'^2   t^2)}},
\label{E:defofK'}
\end{eqnarray}
where $k' := \sqrt{1-k^2}$.

Let $\Omega := \C \setminus (-\infty,-1] \cup [1,\infty)$. 
Then the \emph{Jacobi elliptic function} $\sn(z)$, 
or more precisely  $\sn(z,k)$, 
is the conformal mapping  shown in Figure~\ref{F:elliptic function}. 

\begin{figure}[ht]
\begin{center}
\scalebox{0.45}{\includegraphics{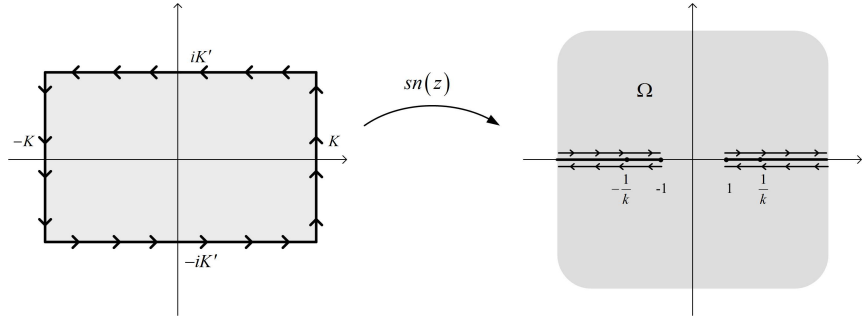}}
\end{center}
\caption{The elliptic function $\sn(z)$}
\label{F:elliptic function}
\end{figure}

The parameter $k$ is free to be any number in the interval $(0,1)$, 
and thus we have a family of elliptic functions. 
The elliptic function $\sn(z)$  \emph{continuously} maps 
the boundaries of the rectangle to the boundary of $\Omega$ in the Riemann sphere, 
i.e. $(-\infty,-1] \cup [1,\infty)\cup \{\infty\}$. 
We emphasize that $\sn$  \emph{continuously} maps the closed rectangle 
$[-K,K] \times [-iK',iK']$ to $\C \cup \{\infty\}$. 
In particular, it maps $\pm iK'$ continuously to $\infty$, 
i.e., if we approach to $\pm iK'$, then $\sn(z)$ tends to infinity. 
However, $\sn$ is not injective on the boundaries of the rectangle. 
If we traverse the path
\[
-iK' \longrightarrow (K-iK') \longrightarrow K \longrightarrow (K+iK') \longrightarrow iK'
\]
on the boundary of the rectangle 
(naively speaking, half of the boundary on the right side), 
then its image under $\sn$ is the interval $[1,\infty]$, 
which is traversed twice in the following manner:
\[
\infty \longrightarrow \frac{1}{k} \longrightarrow 1 \longrightarrow \frac{1}{k} \longrightarrow \infty.
\]
If we continue on the boundary of the rectangle on the path
\[
iK' \longrightarrow (-K+iK') \longrightarrow -K \longrightarrow (-K-iK') \longrightarrow -iK',
\]
then its image under $\sn$ is the interval $[\infty,-1]$, which is traversed twice as
\[
\infty \longrightarrow -\frac{1}{k} \longrightarrow -1 \longrightarrow -\frac{1}{k} \longrightarrow \infty.
\]

Let
\[
\Omega' := \{ z : r < |z| < R \} \setminus (-R,-r),
\]
where
\[
r:=\exp(  -\pi K/K') \hspace{1cm} \mathrm{and} \hspace{1cm}
R:=\exp(  \pi K/K').
\]
Then $g(z):= (K'/\pi)\log z$ maps $\Omega'$ onto the
rectangle $[-K,K] \times [-iK',iK']$. 
Here, $\log$ is the principal branch of the logarithm. 
To better demonstrate the behavior of $g$, 
we put a thin slot on the interval $[-R,-r]$ and study $g$ above and below this slot. 
See Figure~\ref{F:main branch}

\begin{figure}[ht]
\begin{center}
\scalebox{0.5}{\includegraphics{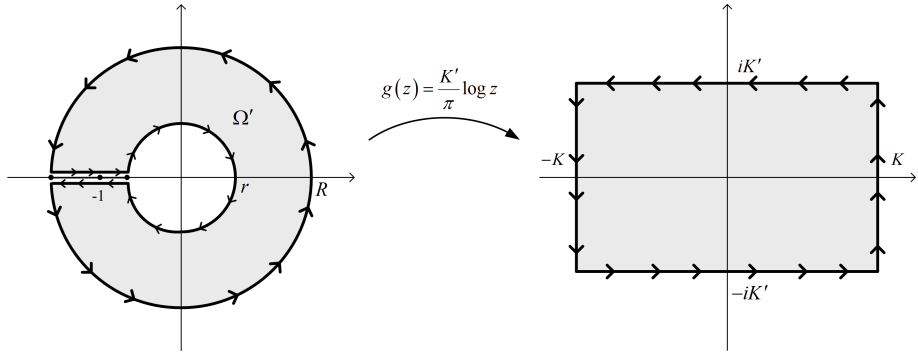}}
\end{center}
\caption{The main branch of logarithm}
\label{F:main branch}
\end{figure}

The conformal mapping $g$ has a continuous extension 
to the closed annulus $\{ z : r \leq |z| \leq R \}$ in the following special manner. 
It is continuous at all points of the circles $\{|z|=r\}$ and $\{|z|=R\}$ 
except at $z=-r$ and $z=-R$. 
If we start from $z=-R$ and traverse counterclockwise the circle $\{|z|=R\}$ 
until we reach this point again, 
then the image of this path under $g$ 
is the segment $\{K\} \times [K-iK',K+iK']$. 
We emphasize that
\[
\lim_{\theta \to -\pi} g(Re^{i\theta}) = K-iK'
\hspace{1cm} \mbox{ and } \hspace{1cm}
\lim_{\theta \to \pi} g(Re^{i\theta}) = K+iK'.
\]
Similarly, if we start from $z=-r$ 
and traverse clockwise the circle $\{|z|=r\}$ 
until we reach this point again, 
then the image of this path under $g$ 
is the segment $\{-K\} \times [-K+iK',-K-iK']$. Note that,
\[
\lim_{\theta \to -\pi} g(re^{i\theta}) = -K-iK'
\hspace{1cm} \mbox{ and } \hspace{1cm}
\lim_{\theta \to \pi} g(re^{i\theta}) = -K+iK'.
\]
Understanding the behavior of $g$ at the points of $[-R,-r]$ is very delicate. 
It depends on the way we approach these points. 
If we approach them from the upper half plane, 
then $g$ continuously and bijectively maps $[-R,-r]$
into the segment $[K,-K] \times \{iK'\}$. 
But, if we approach them from the lower half plane, 
then $g$ continuously and bijectively maps $[-r,-R]$ 
into the segment $[-K,K] \times \{-iK'\}$. 
Therefore, for each $-R \leq x \leq -r$, we have
\[
\lim_{\substack{z \to x\\ \Im z > 0}} g(z) = \frac{K'}{\pi}   \log|x|+iK'
\hspace{0.5cm} \mbox{ and } \hspace{0.5cm}
\lim_{\substack{z \to x\\ \Im z < 0}} g(z) = \frac{K'}{\pi}   \log|x|-iK'.
\]
In particular,
\[
\lim_{\substack{z \to -1\\ \Im z > 0}} g(z) = iK'
\hspace{1cm} \mbox{ and } \hspace{1cm}
\lim_{\substack{z \to -1\\ \Im z < 0}} g(z) = -iK'.
\]

At this point, we combine the last two mappings by defining
\[
h := \sn \circ g.
\]
At first glance, $h$ is a conformal mapping from $\Omega'$ onto $\Omega$. 
But $h$ maps continuously and bijectively $(-R,-1)$ onto $(1/k,\infty)$, 
and $(-1,-r)$ onto $(-\infty,-1/k)$, 
and it also maps continuously  $\{-1\}$ to $\infty$. 
Therefore, by Riemann's theorem, 
$h$ is indeed conformal at all points of $(-R,-r)$ with a simple pole at $\{-1\}$. 
Thus $h$ is a conformal mapping form the annulus $\{ z : r < |z| < R \}$ 
onto $\C \cup \{\infty\} \setminus [-1/k,-1] \cup [1,1/k]$. 
See Figure~\ref{F:confmapping h}.

\begin{figure}[ht]
\begin{center}
\scalebox{0.45}{\includegraphics{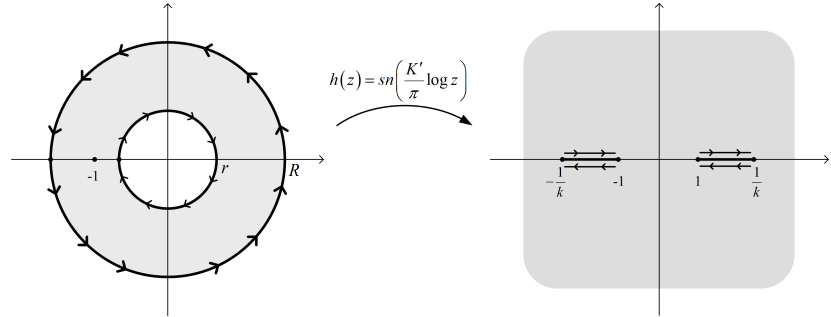}}
\caption{ The conformal mapping $h$}
\label{F:confmapping h}
\end{center}
\end{figure}

We are now ready to define our main conformal mapping. 
Fix $0 < \theta_0 < \pi$, and fix $\varepsilon$ with
\[
0 < \varepsilon < \min\{ \theta_0, ~\pi-\theta_0 \}.
\]
Pick $k \in (0,1)$ such that
\[
\frac{(k-1)^2}{4k} = \frac{\tan(  \frac{\theta_0+\varepsilon}{2}  )}{\tan(\frac{\theta_0}{2})} - 1.
\]
Set 
\begin{equation}  \label{E:defofell'}
\ell := \tan(\theta_0/2)
\qquad\text{and}\qquad
\ell' := \tan(  (\theta_0+\varepsilon)/2  ) = \ell    \bigg(1 + \frac{(k-1)^2}{4k}   \bigg).
\end{equation}
Then the M\"obius transformation
\[
z \longmapsto \frac{k(i-\alpha)   z + (i\beta-\alpha)}{k(i+\alpha)   z +(i\beta+\alpha)},
\]
where
\[
\alpha = \frac{(1+k)   \ell   \tan(\varepsilon/2)}{\ell (1-k) +
2 \tan( \varepsilon/2 ) } \hspace{1cm} \mathrm{and} \hspace{1cm}
\beta = \frac{-(1-k)   \ell + 2k   \tan(\varepsilon/2)}{\ell(1-k) + 2 \tan( \varepsilon/2 ) },
\]
maps the real line into the unit circle in such a way that
\[
-1/k \mapsto 1, \hspace{0.5cm} -1 \mapsto e^{-i\varepsilon},
\hspace{0.5cm} 1 \mapsto  e^{i(\theta_0+\varepsilon)},
\hspace{0.5cm} 1/k \mapsto e^{i\theta_0}.
\]
Moreover,
\[
\infty \mapsto \frac{k(i-\alpha)}{k(i+\alpha)}, \hspace{1cm}
\frac{-(i\beta+\alpha)}{k(i+\alpha)} \mapsto \infty.
\]
Therefore
\[
\Phi(z) := \frac{k(i-\alpha)   h(z) + (i\beta-\alpha)}{k(i+\alpha)
  h(z) + (i\beta+\alpha)}
\]
is a conformal mapping from the annulus $\{ z : r < |z| < R \}$ to
$\C \cup \{\infty\} \setminus \Gamma$, 
where $\Gamma$ consists of two arcs of the unit circle:
\[
\Gamma = \{  e^{i\theta}  :  -\varepsilon \leq \theta \leq 0
 \} \cup \{  e^{i\theta}  :  \theta_0 \leq \theta \leq
\theta_0+\varepsilon  \}.
\]
Figure~\ref{F:confmapping Phi} describes how the boundaries of the annulus are mapped. 

\begin{figure}[ht]
\begin{center}
\scalebox{0.5}{\includegraphics{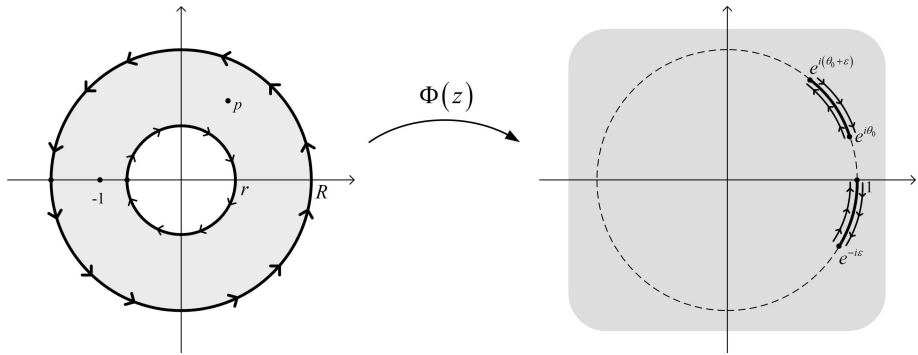}}
\caption{The conformal mapping $\Phi$}
\label{F:confmapping Phi}
\end{center}
\end{figure}

Note that $\Phi$ is conformal at $-1$ with
\[
\Phi(-1) = \frac{i-\alpha}{i+\alpha} \in \T,
\]
and there is a unique point in the annulus, $p$ say, such that
\[
h(p) = - \frac{i\beta+\alpha}{k(i+\alpha)},
\]
and thus $\Phi(p) = \infty$. This point is a simple pole of $\Phi$. Since
$p$ is a simple pole and since $\Phi$ is a conformal mapping, it follows that
$(z-p)   \Phi(z)$ is a \emph{bounded} holomorphic function on the annulus, i.e.,
\begin{equation}  \label{E:z-piphisbdd}
|  (z-p)  \Phi(z)  | \leq C <\infty
\end{equation}
for all $z$ in the annulus.

The conformal mapping $\Phi$ plays a crucial rule in the proof of
the following result of Douglas and Rudin.

\begin{thm}[Douglas--Rudin \cite{MR0254606}]\label{T:dougrud}
Let $\phi \in \mathbf{U}_{L^\infty(\T)}$, 
i.e., a measurable unimodular function on $\T$, 
and let $\varepsilon>0$. 
Then there are inner functions $\omega_1$ and $\omega_2$ (even Blaschke products)
such that
\[
\left\| \phi - \frac{\omega_1}{\omega_2} \right\|_{L^\infty(\T)} < \varepsilon.
\]
\end{thm}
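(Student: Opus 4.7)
The plan is to reduce the general case to that of a unimodular step function with just two values, and then to use the conformal map $\Phi$ together with a Poisson-integral trick to produce the approximating quotient of Blaschke products.

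\textbf{Reduction to two-value step functions.} Given $\phi \in \mathbf{U}_{L^\infty(\T)}$ and $\varepsilon>0$, I would first partition $\T$ according to the argument of $\phi$: with $F_j := \phi^{-1}(\{e^{i\theta}: 2\pi(j-1)/N \le \theta < 2\pi j/N\})$ and $c_j := e^{2\pi i(j-1/2)/N}$ for $j=1,\dots,N$, set $\psi := \sum_{j=1}^N c_j\chi_{F_j}$. Then $\|\phi-\psi\|_\infty \le 2\sin(\pi/N) < \varepsilon/3$ if $N$ is large enough. Next, factor $\psi$ as a product: with $\psi_j := \chi_{\T \setminus F_j}+ c_j\chi_{F_j}$, one checks $\prod_{j=1}^N \psi_j = \psi$, and each $\psi_j$ takes only two unimodular values. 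If each $\psi_j$ can be approximated within $\varepsilon/(3N)$ by a quotient of Blaschke products, then multiplying the approximants yields an $L^\infty$-approximation of $\psi$ within $\varepsilon/3$, and hence of $\phi$ within $\varepsilon$.

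\textbf{The two-value case via $\Phi$.} Fix a two-value unimodular step function $\tilde\psi := \chi_F + c\,\chi_{\T\setminus F}$ with $c=e^{i\theta_0}$, $0<\theta_0<\pi$. Applying the preceding construction produces a conformal mapping $\Phi$ from the annulus $\{r<|z|<R\}$ onto $\C\cup\{\infty\}\setminus\Gamma$, where $\Gamma=A_0\cup A_1$ consists of two short arcs of $\T$ of length $\varepsilon$ near $1$ and near $c$ respectively. Tracing through $g$, $\sn$ and the M\"obius factor shows that the boundary values of $\Phi$ on the inner circle $\{|z|=r\}$ lie in $A_0$ and on the outer circle $\{|z|=R\}$ in $A_1$. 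Now let $U$ be the Poisson integral on $\D$ of $u(\zeta):=\log r\cdot\chi_F(\zeta)+\log R\cdot\chi_{\T\setminus F}(\zeta)$, let $V$ be a harmonic conjugate, and set $h:=e^{U+iV}$. Then $h$ is an outer function in $H^\infty$ with $r<|h(z)|<R$ on $\D$, and by Fatou's theorem its nontangential boundary values satisfy $|h|=r$ a.e.\ on $F$ and $|h|=R$ a.e.\ on $\T\setminus F$. Therefore $G:=\Phi\circ h$ is meromorphic on $\D$ with nontangential boundary values in $\Gamma$, and $\|G-\tilde\psi\|_\infty<\varepsilon$.

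\textbf{Extracting Blaschke products and the main obstacle.} Since $\Phi$ has exactly one pole $p$ in the annulus, the poles of $G$ in $\D$ are the zeros of $h-p\in H^\infty$, which form a Blaschke sequence by the standard $H^\infty$ zero-theorem. Let $\omega_2$ be the corresponding Blaschke product; then $\omega_2 G$ is bounded holomorphic on $\D$ and unimodular on $\T$, hence an inner function $\omega_1$, so $G=\omega_1/\omega_2$. The delicate point -- which I expect to be the main obstacle -- is upgrading ``inner'' to ``Blaschke product''. One natural way is to appeal to Frostman's theorem (Theorem~\ref{T:frostman-approx}) to replace $\omega_1$ by a Blaschke product within $L^\infty$-error $\varepsilon$, which is absorbed in the final tolerance.
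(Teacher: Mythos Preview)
Your approach is essentially the paper's: reduce to two-valued unimodular step functions, push the Poisson extension $h=e^{U+iV}$ into the annulus, compose with the conformal map $\Phi$, and read off a quotient of inner functions whose boundary values sit on the short arcs $\Gamma$. The reduction step and the use of Frostman at the end are both fine and match the paper.

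The genuine gap is in your sentence ``Let $\omega_2$ be the corresponding Blaschke product; then $\omega_2 G$ is bounded holomorphic on $\D$.'' Multiplying by the Blaschke product $B$ built from the zeros of $h-p$ certainly removes the poles of $G=\Phi\circ h$, but it does \emph{not} by itself give an $H^\infty$ bound. Write the canonical factorization $h-p=B\,S\,O$ with $S$ singular inner and $O$ outer. From the conformal-map estimate $|(w-p)\Phi(w)|\le C$ you only get
\[
|B(z)\,G(z)|\le \frac{C\,|B(z)|}{|h(z)-p|}=\frac{C}{|S(z)|\,|O(z)|}.
\]
The outer factor is harmless: since $|h-p|$ is bounded below on $\T$ (because $|h|\in\{r,R\}$ a.e.\ and $r<|p|<R$), Smirnov gives $1/O\in H^\infty$. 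But nothing in your argument rules out a nontrivial singular factor $S$, and along paths where $|S|\to 0$ the right-hand side blows up; so $BG$ need not be bounded. This is exactly why the paper multiplies by the \emph{full} inner part $BS$ of $h-p$, obtaining $|B S\,G|\le C/|O|\le C'$, and then sets $\Psi=\omega/(BS)$ with $\omega:=BS\,G$ inner. Once you make that correction, your proof coincides with the paper's.

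One further remark: the step you flag as ``the main obstacle'' (upgrading inner to Blaschke via Frostman) is in fact routine; the genuinely delicate point is precisely the boundedness issue above.
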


\begin{proof}
First we consider a special class of unimodular functions. 
Let $E$ be a measurable subset of $\T$, 
and let $0<\theta_0<\pi$. Set
\[
\phi( e^{i\theta} ) := \left\{
\begin{array}{ccl}
e^{i\theta_0}, & \mathrm{if} & e^{i\theta} \in E, \\
1, & \mathrm{if} & e^{i\theta} \in \T \setminus E.
\end{array}
\right.
\]
Thus $\phi$ is a unimodular function 
that takes only two different values on $\T$. 
Given $\varepsilon>0$, pick $k$ such that
(\ref{E:defofell'}) holds. Then $K$ and $K'$ are defined respectively by
(\ref{E:defofK}) and (\ref{E:defofK'}). Set
\[
u( e^{i\theta} ) := \left\{
\begin{array}{ccl}
\pi K/K', & \mathrm{if} & e^{i\theta} \in E, \\
-\pi K/K', & \mathrm{if} & e^{i\theta} \in \T \setminus E,
\end{array}
\right.
\]
and let $U=P_r*u$ be its harmonic extension to the open unit disc with
the harmonic conjugate $V=Q_r*u$. Since $-\pi K/K' < U < \pi K/K'$,
the holomorphic function $F=\exp(U+iV)$ maps the unit disc into the
annulus
\[
\{  z  :  \exp(-\pi K/K') < |z| < \exp(\pi K/K')  \}.
\]
Moreover,
\begin{equation}  \label{E:FoncR}
F(e^{i\theta}) = \lim_{r \to 1} F(re^{i\theta}) \in \{  z  :|z|=\exp(\pi K/K')  \}
\end{equation}
for almost all $e^{i\theta} \in E$, and
\begin{equation}  \label{E:Foncr}
F(e^{i\theta}) = \lim_{r \to 1} F(re^{i\theta}) \in \{  z  :|z|=\exp(-\pi K/K')  \}
\end{equation}
for almost all $e^{i\theta} \in \T \setminus E$.

Let $\Psi :=\Phi \circ F$, 
where $\Phi$ is the conformal mapping depicted in Figure~\ref{F:confmapping Phi}. 
Then $\Psi$ is a meromorphic function with poles at the points 
$\{z \in \D  :  F(z) = p \}$. 
Since $\Phi$ has a simple pole at $p$, 
the order of $\Psi$ at a pole $z_0$ is \emph{equal}
the order of $z_0$ as a zero of $F(z)-p$. 
Moreover, since $F(z)-p \in H^\infty(\D)$, 
the zeros of $F(z)-p$ form a Blaschke sequence in $\D$, 
and, by the canonical factorization theorem, 
$F(z)-p$ can be decomposed as
\begin{equation}  \label{D:decompf-p}
F(z)-p = B(z)   S(z)   O(z),
\end{equation}
where $B$ is a Blaschke product, 
$S$ is a singular inner function and $O$ is an outer function.
We shall show that $\omega(z):= B(z)   S(z)   \Psi(z)$ is an inner function 
(note that the product is inner, not $\Psi(z)$ alone).

First of all, since the poles of $\Psi$ are canceled by the zeros of $B$, 
the function $\omega$ is holomorphic on $\D$. Secondly,
\[
|F(e^{i\theta})-p| = |B(e^{i\theta})   S(e^{i\theta}) O(e^{i\theta})| = |O(e^{i\theta})|
\]
for almost all $e^{i\theta} \in \T$. 
Moreover, by (\ref{E:FoncR}),
\[
|O(e^{i\theta})| = |F(e^{i\theta})-p| \geq \exp(\pi K/K') - |p| >0
\]
for almost all $e^{i\theta} \in E$, and , by (\ref{E:Foncr}),
\[
|O(e^{i\theta})| = |F(e^{i\theta})-p| \geq |p| - \exp(-\pi K/K')> 0
\]
for almost all $e^{i\theta} \in \T \setminus E$. 
Thus $|O|$ is bounded away from zero on $\T$, 
which, by Smirnov's theorem, implies that
\[
\frac{1}{O} \in H^\infty(\D).
\]
Finally, by (\ref{E:z-piphisbdd}) and (\ref{D:decompf-p}),
\begin{eqnarray*}
|\omega(z)| &=& |B(z)|    |S(z)|    |\Psi(z)| \\
&=& |B(z)|
|S(z)|  |\Phi( F(z) )| \\
&\leq& |B(z)|    |S(z)|    \frac{C}{|F(z)-p|}\\
&\leq&
\frac{C}{|O(z)|} \leq C'
\end{eqnarray*}
for all $z \in \D$. 
Thus $\omega \in H^\infty(\D)$. 
Moreover, for almost all $e^{i\theta} \in \T$,
\[
\omega(e^{i\theta}) = B(e^{i\theta})   S(e^{i\theta}) \Psi(e^{i\theta}) \in \T.
\]
Therefore, $\omega$ is indeed an inner function.

Turning back to $\Psi$, we note that
\[
\Psi = \frac{\omega}{B    S}
\]
is the quotient of two inner functions.
Also, by
(\ref{E:FoncR}) and the behavior of $\Phi$ on the circle $\{  z:  |z|=\exp(\pi K/K')  \}$, we have
\[
|\phi(e^{i\theta}) - \Psi(e^{i\theta})|  =
|e^{i\theta_0}-\Phi(  F(e^{i\theta}) )|\leq \varepsilon
\]
for almost all $e^{i\theta} \in E$, and, by (\ref{E:Foncr}) and the
behavior of $\Phi$ on the circle $\{  z  :  |z|=\exp(-\pi K/K')\}$, we also have
\[
|\phi(e^{i\theta}) - \Psi(e^{i\theta})| = |1-\Phi(
F(e^{i\theta}) )| \leq \varepsilon
\]
for almost all $e^{i\theta} \in \T \setminus E$. 
This means that
\[
\|\phi - \Psi\|_\infty \leq \varepsilon.
\]

To show that an arbitrary measurable unimodular function 
can be uniformly approximated by the quotient of inner functions, 
we use a simple approximation technique.
Let $\phi$ be a measurable unimodular function. 
Given $\varepsilon>0$, choose $N\geq 1$ such that $2\pi/N < \varepsilon$. Let
\[
E_k := \{  e^{i\theta}  :  2\pi(k-1)/N   \leq \arg
\phi(e^{i\theta}) < 2\pi k/N  \}, \hspace{1cm} (1 \leq k \leq N),
\]
and let
\[
\phi_k( e^{i\theta} ) := \left\{
\begin{array}{ccc}
e^{i2\pi k/N}, & \mathrm{if} & e^{i\theta} \in E_k, \\
1, & \mathrm{if} & e^{i\theta} \ne E_k.
\end{array}
\right.
\]
Then each $\phi_k$ is unimodular 
and takes \emph{only} two different values on $\T$, and
\[
\| \phi - \phi_1   \phi_2   \cdots \phi_N
 \|_\infty \leq \varepsilon.
\]
According to the first part of the proof, there are inner
functions $\omega_{k1}$ and $\omega_{k2}$ such that
\[
\|\phi_k - \omega_{k1}/\omega_{k2} \|_\infty < \varepsilon/N, \hspace{1cm} (1 \leq k \leq N).
\]
Since
\begin{eqnarray*}
\phi - \frac{\omega_{11}}{\omega_{12}}   \frac{\omega_{21}}{\omega_{22}}
  \cdots \frac{\omega_{N1}}{\omega_{N2}}   &=& \phi - \phi_1
\phi_2   \phi_3  \cdots \phi_N \\
&+& (  \phi_1 - \frac{\omega_{11}}{\omega_{12}}  )  \phi_2   \phi_3   \cdots \phi_N \\
&+& \frac{\omega_{11}}{\omega_{12}}   (  \phi_2 - \frac{\omega_{21}}{\omega_{22}}  )
\phi_3  \cdots \phi_N \\
&\vdots& \\&+& \frac{\omega_{11}}{\omega_{12}}   \frac{\omega_{21}}{\omega_{22}}   \cdots (  \phi_N - \frac{\omega_{N1}}{\omega_{N2}}  ),
\end{eqnarray*}
we thus have
\[
\bigg\|  \phi - \frac{\omega_{11}   \omega_{21}   \cdots \omega_{N1}}{\omega_{12}   \omega_{22}   \cdots \omega_{N2}}    \bigg\|_\infty\leq 2\varepsilon.
\]
In the light of Frostman's theorem, 
$\omega_1$ and $\omega_2$ can be replaced by Blaschke products.
This concludes the proof.\qed
\end{proof}


\section{Approximation on $\D$ by convex combinations of quotients of Blaschke products}

\begin{center}
\fbox{Goal: $\overline{\conv(\mathbf{BP}/\mathbf{BP})} =  \overline{\conv(\mathbf{I}/\mathbf{I})} =  \mathbf{B}_{L^\infty(\T)}$}
\end{center}

Clearly, a unimodular measurable function on $\T$
is in the closed unit ball of $L^\infty(\T)$. 
In the first step in studying the closed convex hull of quotients of Blaschke products, 
we show that the family of all unimodular measurable functions on $\T$ 
is a large set in $L^\infty(\T)$, 
in the sense that the closed convex hull of this family  
is precisely the closed unit ball of $L^\infty(\T)$.
The results in this section are taken from \cite{MR0254606}.

\begin{lem} \label{L:cchunimodinl}
Let $f \in \mathbf{B}_{L^\infty(\T)}$ and let $\varepsilon>0$. 
Then there are $u_j \in \mathbf{U}_{L^\infty(\T)}$ 
and  convex weights  $(\lambda_j)_{1 \leq j \leq n}$ such that
\[
\|  \lambda_1   u_1 + \lambda_2   u_2 + \cdots + \lambda_n u_n - f  \|_{L^\infty(\T)} < \varepsilon.
\]
\end{lem}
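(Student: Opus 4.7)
The plan is to imitate the proof of Lemma~\ref{L:cchunimodinl-count} almost verbatim, observing that every estimate in that proof is pointwise in $e^{i\theta}$, and therefore lifts from the continuous setting to the $L^\infty$ setting with no essential change.

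First I would recall the Cauchy integral identity used there: for any $w \in \D$,
\[
w = \frac{1}{2\pi}\int_0^{2\pi} \frac{e^{it}+w}{1+\overline{w}\,e^{it}}\,dt,
\]
together with the Lipschitz-type bound
\[
\left|\frac{e^{it}+w}{1+\overline{w}\,e^{it}} - \frac{e^{it'}+w}{1+\overline{w}\,e^{it'}}\right|
\leq \frac{1+|w|}{1-|w|}\,|t-t'|,
\]
which yields, for the $N$-point Riemann sum, an error bounded by $\frac{1+|w|}{1-|w|}\cdot \frac{2\pi}{N}$.

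Next I would apply this with $w = (1-\varepsilon)f(e^{i\theta})$, which satisfies $|w| \leq 1-\varepsilon$ for almost every $e^{i\theta} \in \T$. This bound makes the Lipschitz constant $\frac{1+|w|}{1-|w|}$ at most $\frac{2}{\varepsilon}$ a.e., so the Riemann sum estimate gives
\[
\left|(1-\varepsilon)f(e^{i\theta}) - \frac{1}{N}\sum_{k=1}^{N} u_k(e^{i\theta})\right|
\leq \frac{4\pi}{\varepsilon N}
\quad\text{for a.e.\ } e^{i\theta}\in\T,
\]
where
\[
u_k(e^{i\theta}) := \frac{e^{i2k\pi/N} + (1-\varepsilon)f(e^{i\theta})}{1 + (1-\varepsilon)\overline{f(e^{i\theta})}\,e^{i2k\pi/N}}.
\]
Combining with the triangle inequality applied to the perturbation from $f$ to $(1-\varepsilon)f$, I get the a.e.\ bound
\[
\left|f(e^{i\theta}) - \frac{1}{N}\sum_{k=1}^{N} u_k(e^{i\theta})\right| \leq \varepsilon + \frac{4\pi}{\varepsilon N}.
\]
Choosing $N$ large enough that $4\pi/(\varepsilon N) < \varepsilon$ and then replacing $\varepsilon$ by $\varepsilon/2$ at the start finishes the quantitative estimate.

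The remaining things to verify are that each $u_k$ lies in $\mathbf{U}_{L^\infty(\T)}$: measurability is immediate since $u_k$ is a composition of a rational function with the measurable function $f$, and unimodularity follows from the standard identity $|\zeta + w| = |1 + \overline{w}\zeta|$ whenever $|\zeta|=1$. I do not foresee a substantive obstacle here; the only thing to be careful about is that the pointwise inequality $|f|\le 1$ need only hold a.e., which is compatible with all the bounds above since they are used only a.e.\ and then converted to an $L^\infty$ estimate. The convex weights are simply $\lambda_k = 1/N$.
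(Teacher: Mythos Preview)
Your proposal is correct and follows essentially the same approach as the paper: the paper's proof simply says ``proceeding precisely as in the proof of Lemma~\ref{L:cchunimodinl-count}'' and writes down the same Riemann-sum estimate and the same unimodular functions $u_k$, noting only that the pointwise bounds now hold almost everywhere.
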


\begin{proof}
Proceeding precisely as in the proof of Lemma \ref{L:cchunimodinl-count}, we obtain
\[
\bigg|  f(e^{i\theta}) - \frac{1}{N}   \sum_{k=1}^{N}
\frac{e^{i2k\pi/N}+(1-\varepsilon)f(e^{i\theta})}{1+(1-\varepsilon)\overline{f(e^{i\theta})}
  e^{i2k\pi/N}}
  \bigg| \leq
\varepsilon+\frac{4\pi}{\varepsilon N}
\]
for each $e^{i\theta} \in \T$. But each
\[
u_k(e^{i\theta}) =
\frac{e^{i2k\pi/N}+(1-\varepsilon)f(e^{i\theta})}{1+(1-\varepsilon)\overline{f(e^{i\theta})} e^{i2k\pi/N}}
\]
is in fact a unimodular function on $\T$. 
Thus, given $\varepsilon>0$, 
it is enough to choose $N$ so large that $4\pi/(\varepsilon N) <\varepsilon$ to get
\[
\bigg|  f(e^{i\theta}) - \frac{1}{N}   \sum_{k=1}^{N} u_k(e^{i\theta}) \bigg| \leq 2\varepsilon
\]
for all $e^{i\theta} \in \T$.\qed
\end{proof}

Theorem \ref{T:dougrud} and Lemma \ref{L:cchunimodinl} together 
show that the closed convex hull in $L^\infty(\T)$ of the set
\[
\left\{  \frac{\omega_1}{\omega_2}   :   \omega_1 \mbox{ and } \omega_2 \mbox{ are inner}  \right\}
\]
is precisely the closed unit ball of $L^\infty(\T)$. 

\begin{thm}[Douglas--Rudin \cite{MR0254606}] \label{T:cchui/idinl}
Let $f \in \mathbf{B}_{L^\infty(\T)}$ and let $\varepsilon>0$. 
Then there are inner functions $\omega_{ij}$, $1 \leq i,j \leq n$ (even Blaschke products) 
and convex weights $(\lambda_j)_{1 \leq j \leq n}$ such that
\[
\bigg\|  \lambda_1   \frac{\omega_{11}}{\omega_{12}} + \lambda_2
\frac{\omega_{21}}{\omega_{22}} + \cdots + \lambda_n
\frac{\omega_{n1}}{\omega_{n2}} - f  \bigg\|_{L^\infty(\T)} < \varepsilon.
\]
\end{thm}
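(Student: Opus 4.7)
The plan is to combine the two ingredients already assembled in this section, exactly as the text suggests. First, I would apply Lemma~\ref{L:cchunimodinl} to the given $f\in\mathbf{B}_{L^\infty(\T)}$ with tolerance $\varepsilon/2$ in place of $\varepsilon$. This produces unimodular measurable functions $u_1,\dots,u_n\in\mathbf{U}_{L^\infty(\T)}$ and convex weights $\lambda_1,\dots,\lambda_n$ such that
\[
\Bigl\|\,f - \sum_{k=1}^n \lambda_k u_k\,\Bigr\|_{L^\infty(\T)} < \varepsilon/2.
\]

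Next, I would upgrade each $u_k$ to a quotient of Blaschke products via the Douglas--Rudin theorem (Theorem~\ref{T:dougrud}): since each $u_k\in\mathbf{U}_{L^\infty(\T)}$, there exist Blaschke products $\omega_{k1}$ and $\omega_{k2}$ with
\[
\Bigl\|\,u_k - \frac{\omega_{k1}}{\omega_{k2}}\,\Bigr\|_{L^\infty(\T)} < \varepsilon/2,
\qquad (1\le k\le n).
\]
Here I am using the ``even Blaschke products'' strengthening of Theorem~\ref{T:dougrud} directly, which is already built into its statement.

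The conclusion is then a one-line triangle inequality argument using that the weights $\lambda_k$ are nonnegative and sum to $1$:
\[
\Bigl\|\,f - \sum_{k=1}^n \lambda_k \frac{\omega_{k1}}{\omega_{k2}}\,\Bigr\|_\infty
\le \Bigl\|\,f - \sum_{k=1}^n \lambda_k u_k\,\Bigr\|_\infty
+ \sum_{k=1}^n \lambda_k \Bigl\|\,u_k - \frac{\omega_{k1}}{\omega_{k2}}\,\Bigr\|_\infty
< \varepsilon/2 + \varepsilon/2 = \varepsilon.
\]

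There is no real obstacle: the structure mirrors the proof of Theorem~\ref{T:cchui/idinl-count} in the continuous setting, with Lemma~\ref{L:cchunimodinl} replacing Lemma~\ref{L:cchunimodinl-count} and Theorem~\ref{T:dougrud} replacing Theorem~\ref{T:helson-sarason}. All the substantive work (the conformal mapping construction behind Douglas--Rudin, and the Riemann-sum trick behind the convex-hull lemma) has already been done in the preceding sections, so the present theorem is genuinely just the assembly step.
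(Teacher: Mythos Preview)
Your proposal is correct and matches the paper's proof essentially line for line: apply Lemma~\ref{L:cchunimodinl} with tolerance $\varepsilon/2$, then Theorem~\ref{T:dougrud} to each $u_k$ with tolerance $\varepsilon/2$, and finish with the triangle inequality using convexity of the weights. Your observation that this mirrors the proof of Theorem~\ref{T:cchui/idinl-count} is exactly right.
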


\begin{proof}
By Lemma \ref{L:cchunimodinl}, 
there are $0 \leq \lambda_1, \lambda_2, \dots , \lambda_n \leq 1$ 
with $\lambda_1+ \cdots + \lambda_n = 1$, 
and  unimodular functions $u_1,   u_2, \dots, u_n$ such that
\[
\|  \lambda_1   u_1 + \lambda_2   u_2 + \cdots + \lambda_n u_n - f  \|_\infty < \varepsilon/2.
\]
Also, for each $k$, by Theorem \ref{T:dougrud}, 
there are inner functions  $\omega_{k1}$ and $\omega_{k2}$ such that
\[
\| u_k - \omega_{k1}/\omega_{k2}  \|_\infty < \varepsilon/2.
\]
Then
\[
\bigg\|  f - \sum_{k=1}^{n} \lambda_k   \omega_{k1}/\omega_{k2}
 \bigg\|_\infty \leq \bigg\|  f - \sum_{k=1}^{n} \lambda_k
u_k  \bigg\|_\infty + \sum_{k=1}^{n} \lambda_k \|   u_k -
\omega_{k1}/\omega_{k2}  \|_\infty < \varepsilon.
\]
This completes the proof.\qed
\end{proof}

\emph{Remark.}
Since the product of two inner functions is an inner function, 
in the quotients appearing in Theorem \ref{T:cchui/idinl}, 
we can take a common denominator and thus, without loss of generality, 
assume that all the $\omega_{k2}$ are equal.
Hence, under the conditions of Theorem \ref{T:cchui/idinl}, 
there are inner functions $\omega$ and $\omega_1,\dots,\omega_n$ such that
\[
\bigg\|  \lambda_1   \frac{\omega_1}{\omega} + \lambda_2
\frac{\omega_2}{\omega} + \cdots + \lambda_n
\frac{\omega_n}{\omega} - f  \bigg\|_\infty < \varepsilon.
\]
The same remark obviously applies to quotients of Blaschke products.


\section{Approximation on $\D$ by convex combination of infinite Blaschke products}

\begin{center}
\fbox{Goal: $\overline{\conv(\mathbf{BP})} =  \overline{\conv(\mathbf{I})} =  \mathbf{B}_{H^\infty}$}
\end{center}

To study convex combinations of Blaschke products, we need the following variant of Theorem \ref{T:cchui/idinl}.

\begin{lem} \label{L:finhinftyisinclosw/w}
Let $f \in H^\infty$ and let $\varepsilon>0$. Then there are
real constants $a_j$ and inner functions
$\omega$ and $\omega_j$ such that
\[
a_1   \frac{\omega_1}{\omega} + a_2   \frac{\omega_2}{\omega} +
\cdots + a_n   \frac{\omega_n}{\omega} \in H^\infty
\]
and
\[
\bigg\|  a_1   \frac{\omega_1}{\omega} + a_2
\frac{\omega_2}{\omega} + \cdots + a_n   \frac{\omega_n}{\omega}
- f  \bigg\|_\infty < \varepsilon.
\]
\end{lem}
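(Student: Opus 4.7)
The plan is to combine the Douglas--Rudin convex-hull result (Theorem \ref{T:cchui/idinl}) with the Adamjan--Arov--Krein corollary (Corollary \ref{C:arovadamkrein}), crucially exploiting that the lemma permits signed real coefficients rather than only nonnegative convex weights.

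First I normalize: replacing $f$ with $(1-\eta)f$ for a small $\eta > 0$ costs at most $\eta\|f\|_\infty$ in the $L^\infty$ norm, which I absorb into $\varepsilon$, and ensures $\|f\|_\infty < 1$ so that Corollary \ref{C:arovadamkrein} applies. Fixing any inner function $\omega$, I invoke that corollary to produce an inner function $\omega_*$ and an element $h \in H^\infty$ with the \emph{exact} identity
\[
f \;=\; \omega_* \,-\, \omega h.
\]
This already writes $f$ as a signed $H^\infty$-combination of the inner function $\omega_*$ and the $\omega H^\infty$-element $\omega h$, with $\|\omega h\|_\infty = \|h\|_\infty \leq 1+\|f\|_\infty < 2$.

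Next I handle the $\omega h$ piece by a single application of Theorem \ref{T:cchui/idinl} (its Remark on common denominators): since $\omega h/C \in \mathbf{B}_{L^\infty(\T)}$ for $C:=1+\|f\|_\infty$, I obtain inner functions $\xi, \xi_1,\ldots,\xi_m$ and convex weights $\mu_k$ such that $\omega h$ is $L^\infty$-approximated to within $\varepsilon/2$ by $C\sum_k \mu_k \xi_k/\xi$. Substituting into the AAK identity and bringing everything to the single inner common denominator $\omega\xi$ (a product of inner functions is inner, so all the numerators $\omega_*\omega\xi$ and $\omega\xi_k$ remain inner), I assemble an $L^\infty$-approximation
\[
f \;\approx\; \sum_{l} a_l\,\frac{\eta_l}{\omega\xi}
\]
with real coefficients $a_l$ (one coming from the $+1$ on $\omega_*$, the rest from the $-C\mu_k$) and inner $\eta_l$, where the $L^\infty$-error is less than $\varepsilon$.

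The main obstacle, which is also what forces the signed-real-coefficient formulation of the lemma, is ensuring that the resulting combination actually lies in $H^\infty$ rather than merely $L^\infty$. This is precisely what the AAK identity buys: because $f = \omega_*-\omega h$ holds \emph{exactly} inside $H^\infty$ and because $\omega h \in \omega H^\infty$ has an inner factor divisible by $\omega$, the contributions from $\omega_*$ and from the Douglas--Rudin approximant of $\omega h$ are set up to cancel the anti-analytic part. A careful tracking of the common denominator $\omega\xi$ (using that $\omega_*\omega\xi$ and each $\omega\xi_k$ share the factor $\omega$) shows that the numerator assembled above is divisible by $\omega\xi$ in $H^\infty$, so that the quotient is analytic. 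This divisibility bookkeeping, made possible only by the negative sign in front of the approximation of $\omega h$ (not available with a purely convex combination), is the delicate heart of the argument.
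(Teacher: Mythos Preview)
Your proposal has a genuine gap at the step where you claim the combination lies in $H^\infty$. After the Adamjan--Arov--Krein identity $f=\omega_*-\omega h$ (which is exact in $H^\infty$), you replace the $H^\infty$ element $\omega h$ by its Douglas--Rudin approximant $C\sum_k\mu_k\,\xi_k/\xi$, which is only an $L^\infty(\T)$ function. The resulting expression
\[
\omega_*-C\sum_k\mu_k\,\frac{\xi_k}{\xi}
\;=\;f+\Bigl(\omega h-C\sum_k\mu_k\,\frac{\xi_k}{\xi}\Bigr)
\]
is $f$ plus an $L^\infty$-small error, but that error is not analytic: nothing forces $\sum_k\mu_k\,\xi_k/\xi$ to belong to $H^\infty$. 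Your ``divisibility bookkeeping'' does not go through: over the common denominator $\omega\xi$ the numerator is $\omega\bigl(\omega_*\xi-C\sum_k\mu_k\xi_k\bigr)$, and while the factor $\omega$ is visible, $\omega_*\xi-C\sum_k\mu_k\xi_k$ has no reason to be divisible by $\xi$. The negative sign plays no role here; the obstruction is simply that the Douglas--Rudin step destroys the analyticity that the exact AAK decomposition had secured.

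The paper's argument reverses the order of the two ingredients, and this is essential. One first approximates $f/\|f\|_\infty$ in $L^\infty$ by a Douglas--Rudin combination $\sum_k\lambda_k\,\omega_k/\omega$; this combination is close to the $H^\infty$ function $f/\|f\|_\infty$, so $F:=\varepsilon'^{-1}\sum_k\lambda_k\,\omega_k/\omega$ satisfies $\dist(F,H^\infty)<1$. Only now does one invoke Adamjan--Arov--Krein, obtaining a unimodular $I$ and $G\in H^\infty$ with $F=I+G$; since $\omega F\in H^\infty$ one sees that $\omega_0:=\omega I$ is inner. The corrected sum $\sum_k\lambda_k\,\omega_k/\omega-\varepsilon'\,\omega_0/\omega=\varepsilon'G$ then lies in $H^\infty$ by construction, and it is still within $2\varepsilon'$ of $f/\|f\|_\infty$. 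Thus AAK is used not to decompose $f$ exactly, but to supply a single correction term (with the same inner denominator~$\omega$ and a negative real coefficient) that projects the $L^\infty$ approximant back into $H^\infty$.
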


\begin{proof}
The result is clear if $f \equiv 0$, 
so let us assume that $f \not \equiv 0$.
By the remark following Theorem~\ref{T:cchui/idinl}, 
there are $0 \leq \lambda_1, \lambda_2, \dots , \lambda_m \leq 1$ 
with $\lambda_1 + \lambda_2 + \cdots + \lambda_m = 1$, 
and  inner functions $\omega_1,   \omega_2, \dots ,\omega_m$ and $\omega$ 
such that
\begin{equation} \label{E:testimw}
\bigg\|  \lambda_1   \frac{\omega_1}{\omega} + \lambda_2
\frac{\omega_2}{\omega} + \cdots + \lambda_m
\frac{\omega_m}{\omega} - \frac{f}{\|f\|_\infty}  \bigg\|_\infty < \varepsilon',
\end{equation}
where $\varepsilon'=\varepsilon/(2 \|f\|_\infty)$. Put
\[
F := \frac{1}{\varepsilon'}    \bigg(  \lambda_1
\frac{\omega_1}{\omega} + \lambda_2   \frac{\omega_2}{\omega} +
\cdots + \lambda_m   \frac{\omega_m}{\omega}  \bigg).
\]
Then $F \in L^\infty(\T)$, and the last inequality shows that
\[
\dist(  F,H^\infty(\T)  ) < 1.
\]
Hence, by Theorem \ref{T:adamarovkrien}, 
there are $G \in H^\infty(\T)$ and a unimodular function $I$  such that $F=I+G$. 
But, since $\omega F$ is in $H^\infty(\T)$, 
the function $\omega_0:=\omega I = \omega F - \omega G$ 
is a unimodular function in $H^\infty(\T)$. 
In other words, $\omega_0$ is an inner function, 
and thus $I=\omega_0/\omega$ is the quotient of two inner functions. Moreover,
\[
\lambda_1   \frac{\omega_1}{\omega} + \lambda_2
\frac{\omega_2}{\omega} + \cdots + \lambda_m
\frac{\omega_m}{\omega} - \varepsilon'   \frac{\omega_0}{\omega} =
\varepsilon'(F-I) = \varepsilon' G \in H^\infty(\T),
\]
and, by (\ref{E:testimw}),
\[
\bigg\|  \lambda_1   \frac{\omega_1}{\omega} + \lambda_2
\frac{\omega_2}{\omega} + \cdots + \lambda_m
\frac{\omega_m}{\omega} - \varepsilon'   \frac{\omega_0}{\omega} -
\frac{f}{\|f\|_\infty}  \bigg\|_\infty < 2\varepsilon'.
\]
Therefore,
\[
\bigg\|   a_1   \frac{\omega_1}{\omega} + a_2
\frac{\omega_2}{\omega} + \cdots + a_m   \frac{\omega_m}{\omega}
+ a_{m+1}   \frac{\omega_{m+1}}{\omega} - f  \bigg\|_\infty < 2
\|f\|_\infty  \varepsilon'=\varepsilon,
\]
where $a_k:= \lambda_k   \|f\|_\infty$, for $1 \leq k \leq m$,
and $a_{m+1}:=-\varepsilon' \|f\|_\infty$ and 
$\omega_{m+1} :=\omega_0$.\qed
\end{proof}

Now we are able to show that  the closed convex hull of
the family of all inner functions on $\T$ 
is precisely the closed unit ball of $H^\infty(\T)$.

\begin{thm}[Marshall \cite{MR0402054}] \label{T:cchuinnerinl}
Let $f \in \mathbf{B}_{H^\infty}$ and let $\varepsilon>0$. 
Then there are  inner functions $\omega_j$ (even Blaschke products) 
and  convex weights $(\lambda_j)_{1 \leq j \leq n}$ such that
\[
\|  \lambda_1   \omega_1 + \lambda_2   \omega_2 + \cdots +
\lambda_n   \omega_n - f  \|_\infty < \varepsilon.
\]
\end{thm}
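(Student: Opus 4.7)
The plan is to combine Lemma~\ref{L:finhinftyisinclosw/w} with a sign-flipping-and-padding trick, followed by an application of the Adamjan--Arov--Krein corollary to handle a stubborn denominator that appears.

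First I would reduce to the case $\|f\|_\infty<1$ by replacing $f$ with $(1-\eta)f$ for small $\eta>0$ (introducing an error of at most $\eta$). Then I would apply Lemma~\ref{L:finhinftyisinclosw/w} with an auxiliary parameter $\varepsilon'>0$ small enough that $(1+\varepsilon')\|f\|_\infty<1$: this produces real coefficients $a_j$ and inner functions $\omega,\omega_1,\dots,\omega_m$ such that
\[
F:=\sum_{j=1}^{m}a_j\,\omega_j/\omega\;\in\;H^\infty,\qquad \|F-f\|_\infty<\varepsilon/2,
\]
and, by tracing the Lemma's construction ($a_k=\lambda_k\|f\|_\infty$ with $\sum_k\lambda_k=1$ plus one extra term of size $\varepsilon'\|f\|_\infty$), the total weight $A:=\sum_j|a_j|=(1+\varepsilon')\|f\|_\infty$ is strictly less than $1$.

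The key observation is that multiplying by $\omega$ gives $\omega F=\sum_j a_j\omega_j\in H^\infty$, a \emph{real} linear combination of inner functions. Absorbing signs (replacing $\omega_j$ by $\mathrm{sgn}(a_j)\omega_j$, still inner) and then padding with an arbitrary inner $I_0$ yields
\[
\omega F \;=\; \sum_{j=1}^{m}|a_j|\,\widetilde\omega_j \;+\; \tfrac{1-A}{2}\,I_0 \;+\; \tfrac{1-A}{2}\,(-I_0),
\]
which is a genuine convex combination of inner functions (the padded terms sum to zero, so the identity is preserved, and the coefficients are non-negative and sum to $A+(1-A)=1$). Thus $\omega F\in\conv(\mathbf{I})$, and so $\omega f$ lies within $\varepsilon/2$ of $\conv(\mathbf{I})$.

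The main obstacle is to remove the $\omega$: to pass from ``$\omega f\in\overline{\conv(\mathbf{I})}$'' to ``$f\in\overline{\conv(\mathbf{I})}$'', since $\bar\omega\notin H^\infty$ so one cannot simply divide. To close this gap I would invoke Corollary~\ref{C:arovadamkrein} applied to the $H^\infty$ function $F$ (of norm $<1$) and the inner function $\omega$: this produces an inner $I$ and a $G\in H^\infty$ with $F=I-\omega G$, exhibiting $F$ itself as an inner function modulo $\omega H^\infty$. Combining this decomposition with the padded representation of $\omega F$ above, and iterating the Lemma on $G$ with careful geometric bookkeeping of the weights at each stage (so that the residuals telescope into a convergent convex combination), one recovers $f$ itself as a convex combination of inner functions up to error $\varepsilon$. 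A final appeal to Frostman's theorem (Theorem~\ref{T:frostman-approx}) permits the inner functions in the combination to be taken as Blaschke products, strengthening the approximation to the form stated. I expect the iterative bookkeeping in this last step --- keeping the residual's $H^\infty$-norm under control so the iteration actually converges --- to be the genuinely hard part of the argument.
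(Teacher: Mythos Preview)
Your first three steps are correct: you do obtain $\omega F\in\conv(\mathbf{I})$ exactly as described, and you are right that passing from $\omega F$ to $F$ is the crux. But the proposed fix via Corollary~\ref{C:arovadamkrein} and iteration does not close this gap. Writing $F=I-\omega G$ with $I$ inner gives a \emph{difference}, not a convex combination: even if you later showed $\omega G\in\overline{\conv(\mathbf{I})}$, the expression $I-\omega G$ would have signed weights summing to $0$, not $1$. Worse, the iteration does not converge: on $\T$ one has $|\omega G|=|I-F|$, so $\|\omega G\|_\infty$ lies between $1-\|F\|_\infty$ and $1+\|F\|_\infty$. The residual therefore has norm essentially~$1$ (possibly exceeding~$1$), so you cannot re-apply the AAK corollary, and even if you could, the residuals would not shrink geometrically. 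The ``careful bookkeeping'' you anticipate needing is not just hard---there is no mechanism in this scheme that drives the residual to zero.

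The paper avoids the division-by-$\omega$ problem altogether by a different device. From the representation $g=\sum_j a_j\,\omega_j/\omega$ with $\|g\|_\infty<1-\varepsilon$, it sets $\omega_0:=\omega_1\cdots\omega_n$ and observes that $\omega_0\overline{g}=\sum_j a_j\,\omega\,\omega_0/\omega_j\in H^\infty$. It then runs the Cauchy--integral/Riemann-sum trick of Lemma~\ref{L:cchunimodinl-count}, but with the M\"obius maps
\[
\omega_k(e^{i\theta}):=\frac{\omega_0(e^{i\theta})\,e^{i2k\pi/N}+g(e^{i\theta})}{1+\overline{g(e^{i\theta})}\,\omega_0(e^{i\theta})\,e^{i2k\pi/N}}.
\]
These are unimodular, and---this is the point---the numerator and denominator both lie in $H^\infty$ (because $g,\omega_0,\omega_0\overline{g}\in H^\infty$) with the denominator bounded below by~$\varepsilon$. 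Hence each $\omega_k$ is inner, and $g\approx\frac{1}{N}\sum_k\omega_k$ is the desired convex combination. The key idea you are missing is this choice of $\omega_0$ making $\omega_0\overline{g}$ analytic, which is what allows the M\"obius averages to land back in $H^\infty$ rather than merely in $L^\infty$.
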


\begin{proof}
By Lemma \ref{L:finhinftyisinclosw/w}, 
there are real constants $a_1,   \dots,   a_n$ 
and inner functions $\omega, \omega_1,   \dots, \omega_n$ such that
\[
g := a_1   \frac{\omega_1}{\omega} + a_2
\frac{\omega_2}{\omega} + \cdots + a_n   \frac{\omega_n}{\omega}
\in H^\infty(\T),
\]
and $\|g - (1-2\varepsilon)f  \|_\infty < \varepsilon$. 
Hence it is enough to approximate $g$ by convex combination of inner functions. 
Note that $\|g\|_\infty < 1-\varepsilon$.

Set
\[
\omega_0:=\omega_1   \omega_2   \cdots   \omega_n.
\]
Since
\[
\overline{g}= \overline{a}_1
\frac{\overline{\omega}_1}{\overline{\omega}} + \overline{a}_2
\frac{\overline{\omega}_2}{\overline{\omega}} + \cdots + \overline{a}_n
  \frac{\overline{\omega}_n}{\overline{\omega}} =  a_1
\frac{\omega}{\omega_1} + a_2   \frac{\omega}{\omega_2} + \cdots
+ a_n   \frac{\omega}{\omega_n},
\]
we clearly have
\[
\omega_0   \overline{g} \in H^\infty(\T).
\]
This property is the main advantage of $g$ over $f$. 
Now we follow a similar procedure to that in the proof of Lemma~\ref{L:cchunimodinl-count}.

Let $w \in \D$ and $\gamma \in \T$. 
Then, by the Cauchy integral formula,
\[
w = \frac{1}{2\pi i} \int_{\T} \frac{\gamma
\zeta+w}{\zeta(1+\overline{w}   \gamma   \zeta)}  \, d\zeta =
\frac{1}{2\pi} \int_{0}^{2\pi} \frac{\gamma
e^{i\theta}+w}{1+\overline{w}   \gamma   e^{i\theta}}  \,d\theta.
\]
Since
\[
\bigg|  \frac{\gamma   e^{i\theta}+w}{1+\overline{w}   \gamma
  e^{i\theta}} - \frac{\gamma   e^{i\theta'}+w}{1+\overline{w}
  \gamma   e^{i\theta'}}  \bigg| \leq \frac{1+|w|}{1-|w|}|\theta-\theta'|,
\]
for
\[
\Delta_N: = \bigg|  w - \frac{1}{N}   \sum_{k=1}^{N} \frac{\gamma
e^{i2k\pi/N}+w}{1+\overline{w}   \gamma   e^{i2k\pi/N}} \bigg|,
\]
we have the estimation
\begin{eqnarray*}
\Delta_N &=& \bigg|  \frac{1}{2\pi}   \int_{0}^{2\pi} \frac{\gamma
e^{i\theta}+w}{1+\overline{w}   \gamma   e^{i\theta}}  \,
d\theta - \frac{1}{N}   \sum_{k=1}^{N} \frac{\gamma
e^{i2k\pi/N}+w}{1+\overline{w} \gamma   e^{i2k\pi/N}} \bigg| \\
&\leq& \frac{1}{2\pi}   \sum_{k=1}^{N}
\int_{2(k-1)\pi/N}^{2k\pi/N} \bigg|  \frac{\gamma
e^{i\theta}+w}{1+\overline{w}   \gamma   e^{i\theta}} -
\frac{\gamma   e^{i2k\pi/N}+w}{1+\overline{w} \gamma
e^{i2k\pi/N}} \bigg|  \, d\theta\\
&\leq& \frac{1}{2\pi}   \sum_{k=1}^{N}
\int_{2(k-1)\pi/N}^{2k\pi/N} \frac{1+|w|}{1-|w|} \frac{2\pi}{N}  \, d\theta \\
&=& \frac{1+|w|}{1-|w|} \frac{2\pi}{N}.
\end{eqnarray*}
Hence, for almost all $e^{i\theta} \in \T$, 
setting $w:=g(e^{i\theta})$ and $\gamma:=\omega_0(e^{i\theta})$, 
we get
\[
\bigg|  g(e^{i\theta}) - \frac{1}{N}   \sum_{k=1}^{N}
\frac{\omega_0(e^{i\theta})
e^{i2k\pi/N}+g(e^{i\theta})}{1+\overline{g(e^{i\theta})}
\omega_0(e^{i\theta})   e^{i2k\pi/N}}
  \bigg| \leq \frac{1+|g(e^{i\theta})|}{1-|g(e^{i\theta})|}
\frac{2\pi}{N}.
\]
Thus, for almost all $e^{i\theta} \in \T$,
\[
\bigg|  f(e^{i\theta}) - \frac{1}{N}   \sum_{k=1}^{N}
\frac{\omega_0(e^{i\theta})
e^{i2k\pi/N}+g(e^{i\theta})}{1+\overline{g(e^{i\theta})}
\omega_0(e^{i\theta})   e^{i2k\pi/N}}
  \bigg| \leq
3\varepsilon+\frac{4\pi}{\varepsilon N}.
\]
But, for each $k$,
\[
\omega_k(e^{i\theta}) := \frac{\omega_0(e^{i\theta})
e^{i2k\pi/N}+g(e^{i\theta})}{1+\overline{g(e^{i\theta})}
\omega_0(e^{i\theta})   e^{i2k\pi/N}}
\]
is in fact an inner function, 
since in the first place it is a unimodular function, 
and besides $g,   \omega_0,   \overline{g}\omega_0 \in H^\infty(\T)$ 
and $|1+\overline{g(e^{i\theta})} \omega_0(e^{i\theta})   e^{i2k\pi/N}| \geq \varepsilon$, 
for almost all $e^{i\theta} \in \T$. 
Therefore, given $\varepsilon>0$, 
it is enough to choose $N$ so large that $4\pi/(\varepsilon N) < \varepsilon$ to get
\[
\bigg|  f(e^{i\theta}) - \frac{1}{N}   \sum_{k=1}^{N} \omega_k(e^{i\theta} \bigg| \leq 4\varepsilon
\]
for almost all $e^{i\theta} \in \T$. 

By Frostman's theorem, there are Blaschke products $B_1,\dots,B_n$ such that
$\| \omega_k - B_k \|_\infty < \varepsilon/2$, for each $1 \leq k \leq n$. Hence
\[
\bigg\|  f - \sum_{k=1}^{n} \lambda_k   B_k  \bigg\|_\infty
\leq \bigg\|  f - \sum_{k=1}^{n} \lambda_k   \omega_k
 \bigg\|_\infty + \sum_{k=1}^{n} \lambda_k   \|  B_k - \omega_k
 \|_\infty < \varepsilon.
\]
This completes the proof.\qed
\end{proof}


\section{An application: the Halmos conjecture}

Let $H$ be a complex Hilbert space
and  $T$ be a bounded linear operator on~$H$.
The \emph{numerical range} of $T$ is defined by
\[
W(T):=\{\langle Tx,x\rangle: x\in H,~\|x\|=1\}.
\]
It is a convex set
whose closure contains the spectrum of $T$.
If $\dim H<\infty$, then $W(T)$ is compact.
The \emph{numerical radius} of $T$ is defined by
\[
w(T):=\sup\{|\langle Tx,x\rangle|: x\in H,~\|x\|=1\}.
\]
It is related to the operator norm via
the   double inequality
\begin{equation}\label{E:nrnorm}
\|T\|/2\le w(T) \le \|T\|.
\end{equation}
If further $T$ is self-adjoint, then $w(T)=\|T\|$. In contrast with spectra,
it is not true in general that $W(p(T))=p(W(T))$ for polynomials $p$,
nor is it true if we take convex hulls of both sides.
However, some partial results do hold.
Perhaps the most famous of these is the power inequality:
for all $n\ge1$, we have
\[
w(T^n)\le w(T)^n.
\]
This was conjectured by Halmos and, after  several  partial results,
was established by Berger using dilation theory.
An elementary proof was given by Pearcy in \cite{MR0201976}.
A more general result was established by Berger and Stampfli in \cite{MR0222694}.
They showed that, if $w(T)\le1$, then, for all $f$ in the disk algebra with $f(0)=0$, we have
\[
w(f(T))\le\|f\|_\infty.
\]
Again their proof  used dilation theory. 
We give an elementary proof of this result
along the lines of Pearcy's proof of the power inequality. 

We require two folklore lemmas about finite Blaschke products.

\begin{lem}\label{L:Blaschke1}
Let $B$ be a finite Blaschke product.
Then $\zeta B'(\zeta)/B(\zeta)$ is real and strictly positive for all $\zeta\in\T$.
\end{lem}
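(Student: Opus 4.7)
The plan is to reduce to the case of a single Blaschke factor and then perform a direct computation. Write
\[
B(z) = c\prod_{k=1}^n b_{a_k}(z), \qquad b_{a_k}(z) := \frac{z-a_k}{1-\overline{a_k}z},
\]
where $|c|=1$ and $a_1,\dots,a_n \in \D$. Since the logarithmic derivative is additive under multiplication and the unimodular constant $c$ contributes nothing, we have
\[
\frac{\zeta B'(\zeta)}{B(\zeta)} = \sum_{k=1}^n \frac{\zeta b_{a_k}'(\zeta)}{b_{a_k}(\zeta)} \qquad(\zeta\in\T),
\]
so it suffices to prove the claim for each single factor.

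For a single factor $b_a(z)=(z-a)/(1-\overline{a}z)$, the quotient rule yields
\[
b_a'(z) = \frac{1-|a|^2}{(1-\overline{a}z)^2},
\qquad\text{hence}\qquad
\frac{z\, b_a'(z)}{b_a(z)} = \frac{z(1-|a|^2)}{(z-a)(1-\overline{a}z)}.
\]
Now I would exploit the identity that makes the whole proof work: for $\zeta\in\T$, the relation $\overline{\zeta}=1/\zeta$ gives
\[
\overline{1-\overline{a}\zeta} = 1-a\overline{\zeta} = \frac{\zeta-a}{\zeta},
\]
so $(\zeta-a)(1-\overline{a}\zeta) = \zeta\,|1-\overline{a}\zeta|^2$. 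Substituting, the factor $\zeta$ cancels and we obtain
\[
\frac{\zeta\, b_a'(\zeta)}{b_a(\zeta)} = \frac{1-|a|^2}{|1-\overline{a}\zeta|^2},
\]
which is manifestly real and strictly positive (note that $|1-\overline{a}\zeta|\ge 1-|a|>0$, so no division by zero occurs).

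Summing over $k$ then yields the clean closed form
\[
\frac{\zeta B'(\zeta)}{B(\zeta)} = \sum_{k=1}^n \frac{1-|a_k|^2}{|1-\overline{a_k}\zeta|^2} > 0 \qquad(\zeta\in\T),
\]
which proves the lemma. There is no real obstacle here — the only substantive step is recognizing the identity $(\zeta-a)(1-\overline{a}\zeta) = \zeta|1-\overline{a}\zeta|^2$ on $\T$, and once that is used the result is forced.
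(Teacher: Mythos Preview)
Your proof is correct and follows essentially the same route as the paper: use additivity of the logarithmic derivative to reduce to a single Blaschke factor, then compute directly to obtain the sum $\sum_k (1-|a_k|^2)/|\zeta-a_k|^2$ (equivalently $\sum_k (1-|a_k|^2)/|1-\overline{a_k}\zeta|^2$, since $|\zeta-a_k|=|1-\overline{a_k}\zeta|$ on $\T$). You are slightly more explicit about the key identity $(\zeta-a)(1-\overline{a}\zeta)=\zeta|1-\overline{a}\zeta|^2$, but the argument is the same.
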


\begin{proof}
We can write
\[
B(z)=c\prod_{k=1}^n\frac{a_k-z}{1-\overline{a}_kz},
\]
where $a_1,\dots,a_n\in\D$ and $c\in\T$. Then
\[
\frac{B'(z)}{B(z)}
=\sum_{k=1}^n\frac{1-|a_k|^2}{(z-a_k)(1-\overline{a}_kz)}.
\]
In particular, if $\zeta\in\T$, then
\[
\frac{\zeta B'(\zeta)}{B(\zeta)}
=\sum_{k=1}^n\frac{1-|a_k|^2}{|\zeta-a_k|^2},
\]
which is real and strictly positive.\qed
\end{proof}

\begin{lem}\label{L:Blaschke2}
Let $B$ be a Blaschke product of degree $n$ such that $B(0)=0$.
Then, given $\gamma\in\T$,
there exist $\zeta_1,\dots,\zeta_n\in\T$
and $c_1,\dots,c_n>0$ such that
\begin{equation}\label{E:Blaschke2}
\frac{1}{1-\overline{\gamma}B(z)}
=\sum_{k=1}^n\frac{c_k}{1-\overline{\zeta}_kz}.
\end{equation}
\end{lem}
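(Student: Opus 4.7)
The plan is to locate the $n$ poles of $1/(1-\overline{\gamma}B(z))$ on $\T$, expand the function in partial fractions and rewrite each term in the form $c_k/(1-\overline{\zeta}_kz)$, and then identify $c_k$ explicitly as a residue whose positivity will fall out of Lemma~\ref{L:Blaschke1}.

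First, I would pin down the zeros of $1-\overline{\gamma}B$. Let $\tilde B:=\overline{\gamma}B$, again a finite Blaschke product of degree $n$ with $\tilde B(0)=0$. Since $|\tilde B|<1$ on $\D$, the equation $\tilde B(z)=1$ has no solution in $\D$. On $\T$, Lemma~\ref{L:Blaschke1} gives $\zeta\tilde B'(\zeta)/\tilde B(\zeta)>0$, so $\tilde B$ maps $\T$ smoothly and strictly monotonically onto itself as an $n$-to-$1$ covering. Consequently $B(\zeta)=\gamma$ has exactly $n$ distinct solutions $\zeta_1,\dots,\zeta_n\in\T$, each a \emph{simple} zero of $1-\overline{\gamma}B$ because $B'(\zeta_k)\neq0$.

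Next, I would write $B=P/Q$ with $Q(z)=\prod_k(1-\overline{a}_kz)$ and $P(z)=c\prod_k(a_k-z)$. Since $B(0)=0$ some $a_k$ is zero, so $\deg Q\leq n-1$, whereas $\deg P=n$ forces $\deg(Q-\overline{\gamma}P)=n$. Therefore
\[
\frac{1}{1-\overline{\gamma}B(z)}=\frac{Q(z)}{Q(z)-\overline{\gamma}P(z)}
\]
is a \emph{proper} rational function whose denominator is a constant multiple of $\prod_k(z-\zeta_k)$. A standard partial fraction expansion, which carries no additive constant, gives
\[
\frac{1}{1-\overline{\gamma}B(z)}=\sum_{k=1}^{n}\frac{B_k}{z-\zeta_k},
\]
and the identity $\frac{1}{z-\zeta_k}=\frac{-\overline{\zeta}_k}{1-\overline{\zeta}_kz}$ puts this in the required form with $c_k=-\overline{\zeta}_kB_k$.

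Finally, I would compute $c_k$ directly as a residue. L'H\^opital's rule yields
\[
c_k=\lim_{z\to\zeta_k}\frac{1-\overline{\zeta}_kz}{1-\overline{\gamma}B(z)}=\frac{\overline{\zeta}_k}{\overline{\gamma}B'(\zeta_k)},
\]
and substituting $\overline{\gamma}=\overline{B(\zeta_k)}=1/B(\zeta_k)$ (valid since $B(\zeta_k)=\gamma\in\T$) collapses this to
\[
c_k=\frac{B(\zeta_k)}{\zeta_kB'(\zeta_k)},
\]
the reciprocal of the quantity shown to be real and strictly positive in Lemma~\ref{L:Blaschke1}. The main (minor) obstacle is to invoke $B(0)=0$ at exactly the right moment, namely to force $\deg Q\leq n-1$ so that the partial fraction is proper and carries no constant term; once that is secured, everything else -- simplicity of the poles, location on $\T$, and positivity of the $c_k$ -- comes straight from Lemma~\ref{L:Blaschke1}.
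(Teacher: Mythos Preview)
Your proof is correct and follows essentially the same route as the paper: locate the $n$ simple zeros of $1-\overline{\gamma}B$ on $\T$ via Lemma~\ref{L:Blaschke1}, expand in partial fractions, and compute $c_k=B(\zeta_k)/(\zeta_kB'(\zeta_k))$ as a residue. The only cosmetic difference is how the absence of a constant term is justified: the paper observes that $B(0)=0$ forces $B(\infty)=\infty$ so that $1/(1-\overline{\gamma}B)$ vanishes at infinity, whereas you reach the same conclusion by the equivalent degree count $\deg Q\le n-1<\deg(Q-\overline{\gamma}P)$.
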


\begin{proof}
Given $\gamma\in\T$,
the roots of the equation $B(z)=\gamma$ lie on the unit circle,
and by Lemma~\ref{L:Blaschke1} they are simple.
Call them $\zeta_1,\dots,\zeta_n$.
Then $1/(1-\overline{\gamma}B)$ has simple poles at the $\zeta_k$.
Also, as $B(0)=0$, we have $B(\infty)=\infty$
and so $1/(1-\overline{\gamma}B)$ vanishes at $\infty$.
Expanding it in partial fractions gives \eqref{E:Blaschke2},
for some choice of $c_1,\dots,c_n\in\C$.

The coefficients $c_k$ are easily evaluated.
Indeed, from \eqref{E:Blaschke2} we have
\[
c_k
=\lim_{z\to\zeta_k}\frac{1-\overline{\zeta}_kz}{1-\overline{\gamma}B(z)}
=\lim_{z\to\zeta_k}\frac{(\zeta_k-z)/\zeta_k}{(B(\zeta_k)-B(z))/B(\zeta_k)}
=\frac{B(\zeta_k)}{\zeta_kB'(\zeta_k)}.
\]
In particular $c_k>0$ by Lemma~\ref{L:Blaschke1}.\qed
\end{proof}

\begin{thm}[Berger--Stampfli \cite{MR0222694}]\label{T:Berger}
Let $H$ be a complex Hilbert space,
let $T$ be a bounded linear operator on $H$ with $w(T)\le1$,
and let $f$ be a function in the disk algebra such that $f(0)=0$.
Then $w(f(T))\le\|f\|_\infty$.
\end{thm}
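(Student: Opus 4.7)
The plan is to combine Carath\'eodory's approximation theorem (Theorem \ref{T:caratheodory-fbp}) with Lemma \ref{L:Blaschke2} and the classical operator characterization
\[
w(S)\le 1 \iff \Re(\bar\gamma S)\le I \text{ for every }\gamma\in\T,
\]
which is just $|\langle Sx,x\rangle|=\sup_{\gamma\in\T}\Re(\bar\gamma\langle Sx,x\rangle)$ rewritten as an operator inequality. Whenever $(I-\bar\gamma S)$ is invertible, a direct computation gives the factorisation
\[
(I-\gamma S^*)\,\Re(I-\bar\gamma S)^{-1}\,(I-\bar\gamma S)=I-\Re(\bar\gamma S),
\]
so, since the outer factors are invertible, $\Re(I-\bar\gamma S)^{-1}\ge 0$ is equivalent to $\Re(\bar\gamma S)\le I$. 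Thus $w(S)\le 1$ if and only if $\Re(I-\bar\gamma S)^{-1}\ge 0$ for every $\gamma\in\T$ at which the resolvent is defined.

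The core of the proof treats the case where $f=B$ is a finite Blaschke product of degree $n$ with $B(0)=0$ and $\|T\|<1$; the latter is arranged by replacing $T$ with $rT$ for $r<1$ and letting $r\to 1$ at the end, using norm-continuity of $w$ and of the holomorphic functional calculus applied to the rational function $B$ (whose poles lie outside $\overline{\D}$). Fix $\gamma\in\T$. Lemma \ref{L:Blaschke2} furnishes $\zeta_k\in\T$ and $c_k>0$ with
\[
\frac{1}{1-\bar\gamma B(z)}=\sum_{k=1}^n\frac{c_k}{1-\bar\zeta_k z},
\]
and setting $z=0$ (using $B(0)=0$) gives $\sum_k c_k=1$. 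Because $\sigma(T)\subset\{|z|<1\}$ avoids the $\zeta_k\in\T$ and the poles of $B$, substituting $z=T$ via the holomorphic functional calculus and taking real parts produces
\[
\Re(I-\bar\gamma B(T))^{-1}=\sum_{k=1}^n c_k\,\Re(I-\bar\zeta_k T)^{-1}.
\]
Each summand on the right is a positive operator by the characterization applied to $T$, so the left-hand side, as a convex combination of positive operators, is itself positive; the characterization applied in the other direction to $B(T)$ then yields $w(B(T))\le 1=\|B\|_\infty$.

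Finally, I would recover the statement for general $f\in\mathcal{A}$ with $f(0)=0$ in two steps. For a polynomial $p$ with $p(0)=0$, Carath\'eodory supplies finite Blaschke products $B_m$ with $B_m(0)=0$ converging UCC to $p/\|p\|_\infty$; for each $r<1$ this gives $B_m(rT)\to p(rT)/\|p\|_\infty$ in operator norm (by uniform convergence on the compact $\sigma(rT)\subset r\overline{\D}$), so the Blaschke step yields $w(p(rT))\le\|p\|_\infty$, and letting $r\to 1$ (trivial for the finite sum $p(rT)$) produces $w(p(T))\le\|p\|_\infty$. For arbitrary $f\in\mathcal{A}$ with $f(0)=0$, approximate $f$ uniformly on $\overline{\D}$ by polynomials $p_m$ vanishing at $0$; the polynomial case applied to $p_m-p_n$ together with $\|A\|\le 2w(A)$ makes $\{p_m(T)\}$ Cauchy in norm, so $f(T):=\lim_m p_m(T)$ is well-defined and $w(f(T))=\lim_m w(p_m(T))\le\lim_m\|p_m\|_\infty=\|f\|_\infty$. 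The main obstacle is really just this last functional-calculus bookkeeping; the operator-theoretic heart of the proof is the three-line convex-combination identity above.
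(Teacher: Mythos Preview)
Your proof is correct and follows essentially the same route as the paper: the partial-fraction identity of Lemma~\ref{L:Blaschke2}, Carath\'eodory's theorem, and a Cauchy-sequence argument based on $\|A\|\le 2w(A)$. The differences are cosmetic---you phrase the core step as the operator inequality $\Re(I-\bar\gamma B(T))^{-1}=\sum_k c_k\,\Re(I-\bar\zeta_k T)^{-1}\ge0$, whereas the paper computes $\Re\bigl(1-\bar\gamma\langle B(T)x,x\rangle\bigr)$ vector by vector, and you route the approximation through polynomials while the paper passes directly from Blaschke products to general $f$ under $\sigma(T)\subset\D$ and only afterward relaxes that via $rT$. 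One small slip: replacing $T$ by $rT$ yields $\sigma(rT)\subset\D$, not $\|rT\|<1$ (since $w(T)\le1$ only gives $\|T\|\le2$), but the spectral condition is all your argument actually uses.
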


\begin{proof}
(Klaja--Mashreghi--Ransford \cite{MR3487232}.) 
Suppose first that $f$ is a finite Blaschke product $B$.
Suppose also that the spectrum $\sigma(T)$ of $T$
lies within the open unit disk $\D$.
By the spectral mapping theorem
$\sigma(B(T))=B(\sigma(T))\subset\D$ as well.
Let $x\in H$ with $\|x\|=1$.
Given $\gamma\in\T$,
let $\zeta_1,\dots,\zeta_n\in\T$ and $c_1,\dots,c_n>0$
as in Lemma~\ref{L:Blaschke2}. Then we have
\begin{align*}
1-\overline{\gamma}\langle B(T)x,x\rangle
&=\langle (I-\overline{\gamma}B(T))x,x\rangle\\
&=\langle y,(I-\overline{\gamma}B(T))^{-1}y\rangle
&&\text{where~}y:=(I-\overline{\gamma}B(T))x\\
&=\Bigl\langle y,\sum_{k=1}^nc_k (I-\overline{\zeta}_kT)^{-1}y\Bigr\rangle
&&\text{by~}\eqref{E:Blaschke2}\\
&=\sum_{k=1}^n c_k\langle(I-\overline{\zeta}_kT)z_k,z_k\rangle
&&\text{where~}z_k:=(I-\overline{\zeta}_kT)^{-1}y\\
&=\sum_{k=1}^n c_k(\|z_k\|^2-\overline{\zeta}_k\langle Tz_k,z_k\rangle).
\end{align*}Since $w(T)\le1$, we have
$\Re(\|z_k\|^2-\overline{\zeta}_k\langle Tz_k,z_k\rangle)\ge0$,
and as $c_k>0$ for all $k$, it follows that
\[
\Re(1-\overline{\gamma}\langle B(T)x,x\rangle)\ge0.
\]
As this holds for all $\gamma\in\T$ and all $x$ of norm $1$,
it follows that $w(B(T))\le1$.

Next we relax the assumption on $f$,
still assuming that $\sigma(T)\subset\D$.
We can suppose that $\|f\|_\infty=1$.
Then, by Carath\'eodory's theorem (Theorem~\ref{T:caratheodory-fbp}),
there exists a sequence of finite Blaschke products $B_n$
that converges locally uniformly to $f$ in $\D$.
Moreover, as $f(0)=0$, we can also arrange that $B_n(0)=0$ for all $n$.
By what we have proved, $w(B_n(T))\le1$ for all $n$.
Also $B_n(T)$ converges in norm to $f(T)$, because $\sigma(T)\subset\D$.
It follows that $w(f(T))\le1$, as required.

Finally we relax the assumption that $\sigma(T)\subset\D$.
By what we have already proved,
$w(f(rT))\le\|f\|_\infty$ for all $r<1$.
Interpreting $f(T)$ as $\lim_{r\to1^-}f(rT)$,
it follows that  $w(f(T))\le\|f\|_\infty$,
\emph{provided} that this limit exists.
In particular this is true when $f$ is holomorphic in a neighborhood of $\overline{\D}$.
To prove the existence of the limit in the general case,
we proceed as follows.
Given $r,s\in(0,1)$,
the function $g_{rs}(z):=f(rz)-f(sz)$
is holomorphic in a neighborhood of $\overline{\D}$
and vanishes at~$0$,
so, by what we have already proved,
$w(g_{rs}(T))\le\|g_{rs}\|_\infty$. Therefore,
\[
\|f(rT)-f(sT)\|=\|g_{rs}(T)\|\le 2w(g_{rs}(T))\le 2\|g_{rs}\|_\infty.
\]
The right-hand side tends to zero as $r,s\to1^{-}$, so,
by the usual Cauchy-sequence argument,
$f(rT)$ converges as $r\to1^-$.
This completes the proof.\qed
\end{proof}

\emph{Remark.}
The assumption that $f(0)=0$ is essential in the Berger--Stampfli theorem.
Without this assumption, the situation becomes more complicated.
The best result in this setting is Drury's teardrop theorem \cite{MR2401640}. 
See also \cite{MR3487232} for an alternative proof.


\bibliography{biblio}
\bibliographystyle{spmpsci}

\end{document}